\theoremstyle{plain}
\newtheorem{thm}{Theorem}[section]
\newtheorem*{thm*}{Theorem}
\newtheorem{prop}[thm]{Proposition}
\newtheorem{lem}[thm]{Lemma}
\newtheorem*{lem*}{Lemma}
\newtheorem{claim}[thm]{Claim}
\newcommand{\codeg}{\text{codeg}}
\newcommand{\BBE}{\mathbb{E}}
\newcommand{\BFP}{\mathbf{P}}
\date{}
\title{\vspace{-0.7cm}Embedding degenerate graphs of small bandwidth}
\author{
Choongbum Lee \thanks{Department of Mathematics,
MIT, Cambridge, MA 02139-4307. Email: cb\_lee@math.mit.edu.
Research supported by NSF Grant DMS-1362326.}
}
\begin{document}

\maketitle

\begin{abstract}
We develop a tool for embedding
almost spanning degenerate graphs of small bandwidth.
As an application, we extend the blow-up lemma to degenerate graphs
of small bandwidth, the bandwidth theorem to degenerate graphs,
and make progress on
a conjecture of Burr and Erd\H{o}s on Ramsey number of degenerate graphs.
\end{abstract}

\section{Introduction} \label{sec:intro}

An embedding of a graph $H$ into a graph $G$ is an injective map $f : V(H) \rightarrow V(G)$
for which $\{f(v), f(w)\}$ is an edge of $G$ whenever $\{v,w\}$ is an edge of $H$.
The study of sufficient conditions which force the existence of an embedding of $H$ into $G$
is a fundamental topic in graph theory that has been studied from various different aspects.
For example, Tur\'an's theorem is a foundational result that
establishes the minimum number of edges needed to guarantee an embedding
of a complete graph into another graph.

Another fundamental result is Dirac's theorem \cite{Dirac} which asserts 
that every $n$-vertex graph $G$
of minimum degree at least $\frac{n}{2}$ contains a Hamilton cycle, i.e., a 
cycle passing through every vertex of the graph exactly once.
Dirac's theorem influenced the development of the study of embedding large subgraphs 
(graphs whose number of vertices have the same order of magnitude as that of the host graph),
a classical example being Hajnal and Szemer\'edi's theorem \cite{HaSz} asserting that every graph of minimum degree
at least $(1- \frac{1}{r})n$ contains $\lfloor \frac{n}{r} \rfloor$ vertex-disjoint copies 
of $K_r$. 

In 1997, Koml\'os, S\'ark\"ozy, and Szemer\'edi  \cite{KoSaSz} made a major breakthrough in this direction. 
They developed the powerful blow-up lemma and used it to tackle various problems such as 
Pos\'a-Seymour conjecture on power of Hamilton cycles \cite{KoSaSz1}, and 
Alon-Yuster conjecture on $F$-packing \cite{KoSaSz2}.
This line of research culminated in the so called Bandwidth Theorem of 
B\"ottcher, Schacht, and Taraz \cite{BoScTa}.
The {\it bandwidth} of a graph $H$ is the minimum integer $b$ for which 
there exists a labelling of its vertices by integers $1,2,\ldots, |V(H)|$
where $|i-j| \le b$ holds for every edge $\{i, j\}$.
Confirming a conjecture of Bollob\'as and Koml\'os, they proved that 
for every integer $r$ and positive real $\delta$, there exists $\beta$ such that
every $n$-vertex graph $G$ (with large $n$) of minimum degree at least 
$(1 - \frac{1}{r} + \delta)n$ contains all $n$-vertex $r$-chromatic 
graphs of bandwidth at most $\beta n$ as a subgraph.

As explained above, the key tool used in these proofs is the blow-up lemma.
A bipartite graph with parts $A \cup B$ is {\it $\varepsilon$-regular} if for every 
$X \subseteq A$ and $Y \subseteq B$ of sizes $|X| \ge \varepsilon|A|$ and $|Y| \ge \varepsilon|B|$, 
the relation $\left| \frac{e(X,Y)}{|X||Y|} - \frac{e(A,B)}{|A||B|} \right| \le \varepsilon$ holds.
Define the {\it relative minimum degree} of a bipartite graph with parts $A \cup B$ as
the minimum of the two quantities $\min_{a \in A} \frac{d(a)}{|B|}$ and $\min_{b \in B} \frac{d(b)}{|A|}$.
In the simplest case for bipartite graphs, the blow-up lemma asserts that 
for every $\delta$ and $\Delta$, there exists $\varepsilon$ such that the following holds:
if $G$ is a bipartite graph with parts $A \cup B$ for which $(A,B)$ is $\varepsilon$-regular
with relative minimum degree at least $\delta$,
then it contains all bipartite graphs $H$
with parts $X \cup Y$ of sizes $|X| \le |A|$ and $|Y| \le |B|$ of maximum degree
at most $\Delta$ as a subgraph. 

Another stream of research in the study of embedding large subgraphs was 
developed in Ramsey theory.
The Ramsey number of a graph $H$, denoted by $r(H)$, is the minimum integer $n$
for which every two-coloring of the edge set of $K_n$, the complete graph on 
$n$ vertices, contains a monochromatic 
copy of $H$. The study of Ramsey numbers was initiated in the seminal paper of 
Ramsey \cite{Ramsey} and now became one of the most active areas of research
in combinatorics. 

One important problem in Ramsey theory is to understand the asymptotics
of $r(H)$ for sparse graphs $H$. 
Since establishing small upper bounds on Ramsey number can be rephrased
as finding large monochromatic subgraphs in edge-colored graphs, it is
somewhat inevitable that developments in the two problems are closely related to each other.
A graph is {\it $d$-degenerate} if there exists an ordering of its vertex set
in which each vertex has at most $d$ neighbors preceding itself.
Degeneracy is a natural measure of sparseness since it implies that every 
subset of $m$ vertices span at most $md$ edges.
The phenomenon of sparse graphs having small Ramsey number was first anticipated 
and studied by Burr and
Erd\H{o}s \cite{BuEr} who made the following influential conjecture (and offered
a cash prize for a solution):
for every $d \ge 1$, there exists a constant $c = c(d)$ such that every $n$-vertex
$d$-degenerate graph $H$ satisfies $r(H) \le c(d) n$.
The conjecture is still open and the current best bound is given by 
Fox and Sudakov \cite{FoSu09, FoSu09-2} who, improving the result of 
Kostochka and Sudakov \cite{KoSu}, proved that there exists a constant $c_d$ depending
on $d$ such that $r(H) \le 2^{c_d\log^{1/2}n}n$ holds for every
$d$-degenerate $n$-vertex graph $H$.

Some versions of this conjecture with relaxed sparsity conditions have been established.  
The case when $H$ has bounded maximum degree was solved by Chv\'atal, R\"odl, 
Szemer\'edi, and Trotter \cite{ChRoSzTr} using the regularity lemma, and the bound
was later improved in subsequent papers \cite{Eaton, GrRoRu00, FoSu09, CoFoSu}. 
An $m$-vertex graph $H$ is {\it $a$-arrangeable} if its vertices can be labelled by $[m] = \{1,2,\cdots, m\}$ so that
$|N^{-}(N^{+}(i))| \le a$ holds every vertex $i \in [m]$ (where $N^{-}(x)$ is the set of
neighbors of $x$ that have smaller label than $x$, and $N^{+}(x)$ is the set of 
neighbors of $x$ that have larger label than $x$).
Chen and Schelp \cite{ChSc} introduced the concept of arrangeability
in their study of Burr and Erd\H{o}s's conjecture and proved that
the conjecture holds for graphs with bounded arrangeability.
Note that arrangeability is a measure of sparseness of graphs that lies strictly between
bounded degree and bounded degeneracy. For example,
if a graph contains a vertex $v$ in at least $\frac{3a+1}{2}$ triangles
sharing no other vertex than $v$, then it has arrangeability
at least $a$, but may be $2$-degenerate. Another example is a 2-subdivision of a complete graph,
a graph obtained from the complete graph by replacing each edge with
internally vertex-disjoint paths of length 2.

\medskip

In this paper, we develop embedding results of graphs with bounded degeneracy and small bandwidth.
The main theorem is an almost-spanning blow-up lemma for such graphs.
For a graph $G$, a pair of vertex subsets $X$ and $Y$ are {\em $(\varepsilon, \delta)$-dense}
if for every $X' \subseteq X$ and $Y' \subseteq Y$ of sizes $|X'| \ge \varepsilon |X|$
and $|Y'| \ge \varepsilon |Y|$ the number of edges between $X'$ and $Y'$ is at least 
$\delta |X'||Y'|$.
An $r$-partite graph $G$ with vertex partition $V_1 \cup \cdots \cup V_r$ is 
$(\varepsilon,\delta)$-dense if $(V_i, V_j)$ is $(\varepsilon,\delta)$-dense
for all pairs of distinct indices $i,j \in [r]$.
The original blow-up lemma is a spanning embedding result of bounded degree graphs
into $(\varepsilon, \delta)$-dense graphs of large enough minimum degree.
Recently, B\"ottcher, Kohayakawa, Taraz, and W\"urfl \cite{BoKoTaWu}
showed that the condition of having bounded degree can be relaxed into the condition
of having bounded arrangeability and maximum degree at most $\frac{\sqrt{n}}{\log n}$.
The following theorem further relaxes the condition and asserts that an
almost-spanning blow-up lemma holds for degenerate graphs of small bandwidth.

\begin{thm} \label{thm:main}
For each positive integers $r, d$ and positive real numbers $\varepsilon, \varepsilon', \delta$,
satisfying $\varepsilon \le (\frac{\delta}{2})^{2r}$,
there exists $N$ such that the following holds for all $n \ge N$.
Let $H$ be an $r$-partite graph over a vertex partition $(W_i)_{i \in [r]}$ 
that is $d$-degenerate, has bandwidth at most $n^{1-\varepsilon'}$, and satisfies $|W_i| \le (1-\varepsilon)n$ for all $i \in [r]$.
Let $G$ be an $(\varepsilon^2,\delta)$-dense 
$r$-partite graph over a vertex partition $(V_i)_{i \in [r]}$ where $|V_i| = n$ for all $i \in [r]$.
Then $G$ contains $H$ as a subgraph.
\end{thm}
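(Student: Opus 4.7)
The plan is a block-by-block sequential embedding. Fix a bandwidth labeling of $V(H)$ and partition it into consecutive blocks $B_1,\ldots,B_t$ of length $b=n^{1-\varepsilon'}$, so every edge of $H$ lies within some $B_j\cup B_{j+1}$. I process blocks in order $B_1, B_2, \ldots, B_t$, maintaining at each stage a candidate set $C(v)\subseteq V_{i(v)}$ for every vertex $v$ lying in the current or next block: $C(v)$ is the intersection of $V_{i(v)}$ with $N_G(f(u))$ over all already-embedded neighbors $u$ of $v$. The slack $\varepsilon n$ per part gives flexibility when selecting images, and the $(\varepsilon^2,\delta)$-density of $G$ provides the key typicality input.

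The technical engine of the argument is a standard consequence of $(\varepsilon^2,\delta)$-density: for any two parts $V_i, V_j$ and any $X\subseteq V_j$ with $|X|\ge \varepsilon^2 n$, all but at most $\varepsilon^2 n$ vertices of $V_i$ have at least $\tfrac{\delta}{2}|X|$ neighbors in $X$. Applied iteratively, this says that $|C(v)|$ shrinks by at most a factor of $\delta/2$ each time a neighbor of $v$ is embedded, provided the chosen image was typical. Concretely, when embedding $v$ I must pick $f(v)\in C(v)$ avoiding both the already-used images in $V_{i(v)}$ and, for each \emph{active} unembedded neighbor $w$ of $v$, the set of at most $\varepsilon^2 n$ vertices atypical for $C(w)$. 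Within each two-block window $B_j\cup B_{j+1}$ the induced subgraph of $H$ is still $d$-degenerate, so a local reverse degeneracy ordering gives each vertex at most $d$ later active neighbors in the window, keeping this bad set of size at most $d\varepsilon^2 n$. The hypothesis $\varepsilon\le(\delta/2)^{2r}$ is calibrated so that $(\delta/2)^d n \gg d\varepsilon^2 n + \varepsilon n$, guaranteeing a valid choice of $f(v)$.

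The main obstacle is the potentially unbounded degree of $H$: a vertex $u$ of degree far exceeding $d$ has many neighbors in its bandwidth window, and all of them must receive typicality updates once $u$ is embedded, not just the $d$ neighbors that are later in the local degeneracy ordering. I anticipate the resolution proceeds in two stages. First, identify a ``heavy'' set $U\subseteq V(H)$ of vertices with degree above a threshold $D$; since $H$ is $d$-degenerate, $|U|\le 2dn/D$, which can be made much smaller than $\varepsilon n$. Second, pre-embed $U$ via a probabilistic placement (random choice of $f(u)\in V_{i(u)}$ followed by an alteration step), and use a first-moment / Chernoff argument to show that with positive probability every light vertex $v\notin U$ retains $|C(v)|\ge (\delta/2)^{|N(v)\cap U|}n/2$ after the heavy pre-embedding. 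With $U$ safely placed, the block-by-block greedy procedure runs on $H\setminus U$, whose maximum degree is now at most $D$, and the per-vertex counting above goes through. The most delicate point, and where I expect the bulk of the technical work to lie, is balancing the competing demands on $D$: small enough that $(\delta/2)^D$ is not too tiny for light vertices to survive the heavy pre-embedding, yet large enough that $|U|$ is small and that the heavy pre-embedding itself can be carried out in a staged manner along the bandwidth ordering so that conditioning between heavy-heavy placements remains under control.
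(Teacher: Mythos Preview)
Your candidate-set scheme is the bounded-degree embedding technique, and it breaks for $d$-degenerate graphs in several places. First, the calibration claim is simply false: the hypothesis $\varepsilon \le (\delta/2)^{2r}$ ties $\varepsilon$ to $r$, not to $d$, and these are independent parameters in the theorem. For any $d > 4r$ one has $(\delta/2)^d < \varepsilon^2$, so the inequality $(\delta/2)^d n \gg d\varepsilon^2 n + \varepsilon n$ you rely on fails outright. Second, in the local reverse degeneracy order you invoke, the \emph{forward} degree is at most $d$ (so few typicality constraints, as you say), but the \emph{backward} degree is unbounded: when you come to embed $v$, its candidate set has already been cut by a factor $(\delta/2)^{|N^-(v)|}$, and $|N^-(v)|$ can be as large as $b = n^{1-\varepsilon'}$. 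Switching to the forward degeneracy order merely swaps the problem. Your heavy/light split cannot rescue this, because the two constraints on the threshold $D$ --- that $(\delta/2)^D$ stay well above $\varepsilon$ so light candidate sets survive, and that $|U| = O(dn/D) \ll \varepsilon n$ so the pre-embedding is negligible --- are incompatible once $d$ is moderately large. Third, even granting all of the above, the greedy scheme cannot reach almost-spanning: near the end the used set in $V_{i(v)}$ has size $(1-\varepsilon)n$, while $|C(v)| \approx (\delta/2)^d n$, and nothing forces $C(v)$ to meet the unused vertices.

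The paper's argument is fundamentally different and avoids candidate sets entirely. It first shows (Lemma~\ref{lem:bandwidth_degenerate}) that $H$ admits a single labelling that is simultaneously $5d$-degenerate and $O(\beta\log\beta)$-local; this is already nontrivial. It then uses \emph{dependent random choice} --- not one-neighbour-at-a-time typicality --- to find, inside the currently unused part of $G$, an $r$-tuple of sets $(A_i)$ with the property that every $d$-tuple in $\bigcup_{j\ne i}A_j$ has at least $2\beta$ common neighbours in $A_i$. Greedy embedding in the degeneracy order then places the next $\beta$ vertices of $H$ into $(A_i)$ with no bookkeeping: each new vertex has at most $5d$ embedded neighbours, and those automatically have $2\beta$ common neighbours available. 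The technical heart is a ``rolling'' DRC (Lemma~\ref{lem:rolling_drc_degenerate_g}): before the current sets are exhausted one finds new sets $(B_i)$ in the unused portion such that $d$-tuples in $A_{-i}$ still have many common neighbours in $A_i\cap B_i$, allowing a smooth handover. Controlling this handover requires tracking ``potentials'' (counts of $(s+d)$-tuples with few common neighbours) and numbers of $\delta$-heavy copies of $K_r$ across the sets, all packaged into a single first-moment inequality. The hypothesis $\varepsilon\le(\delta/2)^{2r}$ enters only in the $K_r$-counting (Lemma~\ref{lem:counting}), not as a bound on $(\delta/2)^d$.
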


See Theorem~\ref{thm:main_extend} for a slightly more general form where we do not require
the parts to be of equal size, and give a more precise bound on the bandwidth.
Note that the complete bipartite graph $K_{d,n-d}$ is $d$-degenerate. 
In a random bipartite graph of density $\frac{1}{2}$ with $n$ vertices in each part,
almost surely every $d$-tuple of vertices have $(1+o(1))\frac{n}{2^d}$ common neighbors
and therefore it does not contain $K_{d,n-d}$ as a subgraph.
Hence the theorem above does not hold if we completely remove the restriction on bandwidth.
Nevertheless, it might be possible to replace the restriction by 
a moderate restriction such as the maximum degree being at most $c^{\Delta}n$ for some constant $c$.

The technique used in proving Theorem~\ref{thm:main} is different from that 
used in the previous versions of the blow-up lemmas.
Our embedding strategy has its origin in the methods developed in Ramsey theory.
More specifically, it is based on dependent random choice, a powerful technique in 
probabilistic combinatorics, and builds on the ideas developed by 
Kostochka and Sudakov \cite{KoSu}, and Fox and Sudakov \cite{FoSu09}.
This technique can further be utilized to 
extend the bandwidth theorem to almost spanning graphs of bounded degeneracy. 
An extension of the bandwidth theorem to arrangeable graphs was proved by
B\"ottcher, Taraz, and W\"urfl \cite{BoTaWu} using the version of blow-up lemma developed by
B\"ottcher, Kohayakawa, Taraz, and W\"urfl (their result requires a much more modest
bound of $o(n)$ on the bandwidth of $H$ compared to the $n^{1-o(1)}$ bound of our
theorem).

\begin{thm} \label{thm:main_bandwidth}
For all positive integers $r, d$ and positive real numbers $\varepsilon, \varepsilon', \delta$,
there exists $N$ such that the following holds for all $n \ge N$.
If $G$ is an $n$-vertex graph of minimum degree at least $(1 - \frac{1}{r} + \delta)n$,
and $H$ is an $r$-partite $d$-degenerate graph of bandwidth at most $n^{1-\varepsilon'}$
on at most $(1-\varepsilon)n$ vertices, then $G$ contains $H$ as a subgraph.
\end{thm}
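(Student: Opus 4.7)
The plan is to reduce Theorem~\ref{thm:main_bandwidth} to Theorem~\ref{thm:main} via the standard two-step pipeline: apply Szemer\'edi's regularity lemma to $G$, find a $K_r$-factor in the reduced graph, and then invoke Theorem~\ref{thm:main} tile-by-tile while cutting $H$ along its bandwidth ordering.

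\textbf{Setup.} First I would apply the degree-form of Szemer\'edi's regularity lemma to $G$ with a regularity parameter $\eta$ chosen small enough that every $\eta$-regular pair of density at least $\delta/4$ is $(\eta,\delta/4)$-dense, and small enough that Theorem~\ref{thm:main} applies with $(\eta,\delta/4)$ in place of $(\varepsilon^2,\delta)$. This yields an equitable partition $V_0 \cup V_1 \cup \cdots \cup V_t$ of $V(G)$, with $|V_0|$ negligible and $|V_i|=m$ for $i\ge 1$, and a reduced graph $R$ on $[t]$ whose edges mark the $\eta$-regular pairs of density at least $\delta/4$. The standard averaging argument transfers the minimum degree condition: $\delta(R) \ge (1 - 1/r + \delta/2)t$. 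By Hajnal--Szemer\'edi, $R$ contains a $K_r$-factor (after dumping at most $r-1$ extra clusters into $V_0$), yielding $s = \lfloor t/r \rfloor$ vertex-disjoint $r$-cliques $T_1, \ldots, T_s$. Each $T_j$ corresponds in $G$ to an $r$-partite $(\eta,\delta/4)$-dense graph on parts of size $m$, so Theorem~\ref{thm:main} is applicable within each $T_j$ individually.

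\textbf{Cutting $H$ and pre-embedding.} Fix a bandwidth-witnessing labeling $v_1,\ldots,v_{|V(H)|}$ of $H$ and split the label set into $s$ consecutive intervals $I_1,\ldots,I_s$ whose sizes are chosen so that each color class $W_i$ contributes at most $(1-\varepsilon/2)m$ vertices to each $I_j$; this is feasible because $|W_i|\le (1-\varepsilon)n\lesssim (1-\varepsilon/2)sm$. Since $H$ has bandwidth $b:=n^{1-\varepsilon'}\ll m$, the only edges of $H$ crossing interval boundaries lie in strips of width $b$ on either side of each cut. Assign $I_j$ to tile $T_j$, and fix a bijection between the clusters of $T_j$ and the color classes $(W_i)_{i\in[r]}$. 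To make cross-boundary edges embeddable, these bijections must be aligned across consecutive tiles so that every cross-boundary edge of $H$ lands inside an $\eta$-regular pair; alignment can be enforced by selecting a $K_r$-factor whose consecutive tiles are chained through a common $K_{r-1}$ in $R$, a backbone construction (in the spirit of B\"ottcher--Schacht--Taraz) made possible by the high minimum degree of $R$. Finally, pre-embed the $O(sb) = o(n)$ boundary vertices one at a time, using a dependent-random-choice selection so that each is placed into its assigned cluster with many typical neighbors in the adjacent tile.

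\textbf{Embedding each tile.} After pre-embedding, each tile gives a self-contained subproblem: embed $H[I_j]$ minus pre-embedded vertices into $T_j$ minus pre-embedded images, subject to the constraint that a small set of vertices of $H[I_j]$ have candidate sets restricted by their already-embedded neighbors in the adjacent tile. Each subproblem satisfies the hypotheses of (a mild strengthening of) Theorem~\ref{thm:main}: the host is $r$-partite and $(\eta',\delta/8)$-dense, and the guest is $r$-partite, $d$-degenerate, of bandwidth at most $b$, with parts of size at most $(1-\varepsilon/2)m$; the restricted candidate sets remain large enough that they can be folded into the $(\varepsilon^2,\delta)$-density hypothesis by slightly weakening the density parameter. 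Applying Theorem~\ref{thm:main} in each tile completes the embedding.

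\textbf{Main obstacle.} The main difficulty is the interface between consecutive tiles. Theorem~\ref{thm:main} as stated does not directly admit vertices with pre-restricted candidate sets, so the proof will either require a straightforward strengthening of Theorem~\ref{thm:main} that tracks restrictions on a vanishing fraction of target vertices, or enough care in the backbone and boundary pre-embedding that no residual constraints remain when Theorem~\ref{thm:main} is invoked. The bookkeeping for aligning bijections across consecutive tiles, pre-embedding boundary vertices into images with rich neighborhoods in the adjacent tile, and ensuring $(\eta,\delta/4)$-density survives pre-embedding, is where most of the parameter juggling will live; the almost-spanning slack $(1-\varepsilon)n$ versus $n$ is exactly the room one needs to absorb these losses.
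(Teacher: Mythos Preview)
Your overall architecture---regularity, find structure in the reduced graph, cut $H$ along its bandwidth ordering, embed piece by piece---matches the paper's, and you correctly identify the central difficulty: Theorem~\ref{thm:main} has no constrained version, so it cannot be invoked as a black box on a tile once boundary vertices have already been placed. But your two proposed workarounds do not close this gap. A ``straightforward strengthening'' of Theorem~\ref{thm:main} to handle restricted candidate sets is exactly what the paper says it lacks (see Section~\ref{sec:remarks}); the dependent-random-choice machinery here is first-moment only, and threading image restrictions through it is not routine. And ``enough care that no residual constraints remain'' is not achievable: any vertex in the interior of $I_j$ within bandwidth distance of a pre-embedded boundary vertex necessarily has its candidate set restricted, so constraints \emph{always} remain when you call the tile-embedder.

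The paper's fix is to abandon Theorem~\ref{thm:main} as a black box and instead build a dedicated embedding lemma (Lemma~\ref{lem:backbone_embedding}) directly on top of the rolling dependent-random-choice step (Lemma~\ref{lem:rolling_drc_general}). The point is that the same mechanism that lets the embedding ``roll'' from one $\beta$-block to the next inside a single $r$-partite host also lets it roll from one tile to the next, provided the reduced-graph structure is rich enough. Concretely, the paper finds not a $K_r$-factor but an $(\varepsilon,\delta)$-\emph{backbone} $(V_{i,j})_{i\in[k],\,j\in[r+1]}$: a copy of $B_k^r$ in the reduced graph together with an extra $(r{+}1)$-th cluster per column forming a $K_{r+1}$ (Lemma~\ref{lem:lemmaG}). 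Correspondingly, $H$ is $(r{+}1)$-colored (Lemma~\ref{lem:lemmaH}) with a tiny $(r{+}1)$-th class kept away from block boundaries. The embedding then runs a two-layer iteration: an inner layer that rolls within a column exactly as in the proof of Theorem~\ref{thm:main_extend}, and an outer layer that transitions between consecutive columns by temporarily viewing the union of two columns as a $2r$-partite host and applying Lemma~\ref{lem:rolling_drc_general} there. This is how the boundary is handled without any pre-embedding or image restrictions---the transition is built into the iteration itself rather than patched on afterwards.
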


Let $G$ be an $n$-vertex graph consisting of two cliques of order $(1-\frac{1}{r})n$
sharing $(1-\frac{2}{r})n$ vertices. Note that it contains two disjoint sets of
vertices each of size $\frac{1}{r}n$ which form an empty bipartite graph. 
Hence $r$-chromatic graphs with good expansion property cannot be embedded into $G$
and thus the bandwidth condition is necessary in Theorem~\ref{thm:main_bandwidth}.
The minimum degree condition can be relaxed for bipartite graphs.

\begin{thm} \label{thm:main_bipartite}
For all positive integers $d$ and positive reals $\varepsilon, \delta$,
there exists $N$ such that the following holds for all $n \ge N$.
If $G$ is an $n$-vertex graph of minimum degree at least $\delta n$,
and $H$ is a bipartite $d$-degenerate graph of bandwidth at most $e^{-100\sqrt{d \log(1/\varepsilon)\log n}}n$
on at most $(\delta-\varepsilon)n$ vertices, then $G$ contains $H$ as a subgraph.
\end{thm}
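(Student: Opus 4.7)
My plan is to apply dependent random choice (DRC) directly to $G$, rather than reducing to Theorem~\ref{thm:main}. Two considerations force this. First, the allowed bandwidth $b := e^{-100\sqrt{d\log(1/\varepsilon)\log n}}n$ is super-polynomial in $n$, hence strictly larger than the $n^{1-\varepsilon'}$ permitted by Theorem~\ref{thm:main}, so a reduction to the blow-up lemma would only yield the weaker bandwidth regime. Second, the $r=2$ case of Theorem~\ref{thm:main_bandwidth} would demand minimum degree $(1/2+\delta)n$, whereas here we only have $\delta n$. The form $e^{-c\sqrt{\log n}}$ of the bandwidth bound is the signature of DRC with parameter $t$ of order $\sqrt{\log n}$, and the dependencies on $d$ and $\varepsilon$ come from balancing $\delta^t n$ against $\binom{n}{d}(b/n)^t$ in the DRC moment calculation.

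Concretely, set $t := \Theta(\sqrt{d\log n/\log(1/\varepsilon)})$, pick a random $t$-tuple $T$ of vertices of $G$, and let $A := N_G(T)$. By convexity, $\mathbb{E}|A| \geq \delta^t n$, while the expected number of $d$-subsets $S \subseteq A$ with $|N_G(S)| < b$ is at most $\binom{n}{d}(b/n)^t$. The parameter choice drives this bad-count strictly below $\frac{1}{2}\delta^t n$, so some outcome of $T$ yields a set $A^* \subseteq V(G)$ with $|A^*| \geq \frac{1}{2}\delta^t n \geq b$ and every $d$-subset of $A^*$ having at least $b$ common neighbors in $V(G)$. Since $\delta^t n$ is only of order $b$, one such pool accommodates only a single chunk of $H$; I therefore iterate the construction on shrinking leftover subsets of $V(G)$ to build a sequence of linked pools $A_1,A_2,\ldots,A_p$, each split into $X$- and $Y$-halves, so that every $d$-subset of $A_j^X$ has at least $b$ common neighbors in $A_j^Y \cup A_{j-1}^Y \cup A_{j+1}^Y$ (and symmetrically). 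The leftover graph retains minimum degree $\geq \varepsilon n$ throughout, since $|V(H)| \leq (\delta-\varepsilon)n$, so the DRC estimate remains valid at every iteration.

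I then embed $H$ chunk by chunk along its bandwidth labeling $v_1,\ldots,v_m$: split $[m]$ into intervals $I_1,\ldots,I_p$ of length at most $b$ and send the vertices of $I_j$ into the appropriate halves of $A_j$ according to the bipartition $X \cup Y$. Inside a chunk, process vertices in the degeneracy order so that each vertex $v_i$ has at most $d$ predecessor-neighbors already placed; by the bandwidth condition these all lie in $I_{j-1} \cup I_j$, and their images sit in pools $A_{j-1}$ or $A_j$ for which the DRC guarantee applies. Since $H$ is bipartite, the predecessor-images lie in the halves opposite to $v_i$'s target, so the DRC property furnishes at least $b$ candidate common-neighbors in the target half; at most $b$ vertices have been used within the active window, so a fresh valid image for $v_i$ always exists.

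The main obstacle will be arranging the sequence of pools with consistently strong common-neighborhood guarantees linking consecutive chunks across the two bipartition classes. Each DRC round costs a factor of $\delta^t$ in pool size, and the construction must supply $p = \Theta((\delta-\varepsilon)n/b)$ disjoint pools of size $\geq b$ while preserving $\Omega(n)$ minimum degree in the leftover graph at every step, and also while ensuring that new pools inherit the requisite dense-neighborhood relation to the pools constructed before them (the directional nature of this condition is what makes the chaining subtle). Verifying that all these constraints can be satisfied simultaneously, with $t = \Theta(\sqrt{d\log n/\log(1/\varepsilon)})$ and $b$ just above $\delta^t n$, should yield exactly the bandwidth bound $e^{-100\sqrt{d\log(1/\varepsilon)\log n}}n$ stated in the theorem.
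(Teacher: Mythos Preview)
Your overall strategy---iterated dependent random choice followed by chunk-by-chunk greedy embedding---is the paper's approach as well. One structural simplification you miss: the paper first takes a \emph{random} bipartition $V(G)=V_1\cup V_2$ with $|V_i|$ proportioned to $|W_i|+\frac{\varepsilon}{4}n$, so that the induced bipartite graph has relative minimum degree close to $\delta$, and then invokes a general bipartite embedding result (Theorem~\ref{thm:bipartite_general}). This upfront step replaces your unspecified splitting of each pool into $X$- and $Y$-halves and guarantees that all DRC common-neighbour counts automatically land on the correct side of the bipartition.

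Two genuine gaps remain in your sketch. First, the embedding step is wrong as written: you process chunks $I_1,I_2,\ldots$ in bandwidth order and then ``process vertices in the degeneracy order'' inside each chunk, claiming each vertex sees at most $d$ already-placed neighbours. But when you reach $v\in I_j$, all of $I_{j-1}$ is already placed regardless of the degeneracy order, and $v$ may have far more than $d$ neighbours there; degeneracy bounds the back-degree only in the degeneracy order itself, not in your hybrid order. The paper repairs this with Lemma~\ref{lem:bandwidth_degenerate}, which produces a \emph{single} labelling that is simultaneously $5d$-degenerate and $\beta\log_2(4\beta)$-local, and this is the order in which the greedy embedding is run.

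Second, the linking of consecutive pools---which you rightly flag as the crux---is not actually carried out. One DRC round gives a set whose $d$-tuples have many common neighbours in $V(G)$, but you need common neighbours \emph{inside the next pool}, a set not yet constructed. The paper's device is the $(s,d,\beta)$-\emph{potential}: rather than only controlling $d$-tuples, one tracks the number of $(s{+}d)$-tuples with few common neighbours and shows this count stays $\lambda$-negligible from one round to the next (Lemma~\ref{lem:rolling_drc_degenerate}, via Proposition~\ref{prop:negligible_potential}). The point is that if $A_2$ has negligible $(s,d,\beta)$-potential in $A_1$, then a typical random $s$-set $T\subseteq A_2$ has the property that every $d$-tuple $Q\subseteq A_2$ satisfies $|N(Q\cup T)\cap A_1|\ge\beta$, i.e.\ $Q$ has many common neighbours in $A_1\cap N(T)=A_1\cap B_1$. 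This is exactly the backward link you need, and without this mechanism (or an equivalent one) your iteration does not close.
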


Since an $n$-vertex graph of density $\delta$ contains a subgraph of minimum degree at least 
$\frac{\delta}{2}n$, Theorem~\ref{thm:main_bipartite} gives a density-type embedding
theorem for bipartite degenerate graphs of small bandwidth.
Thus in some sense extends an embedding result of Fox and Sudakov \cite{FoSu09}
(improving that of Kostochka and Sudakov \cite{KoSu}), 
asserting the existence of a positive constant $c$ such that if $H$ is an 
$d$-degenerate bipartite graph on at most $e^{-c\sqrt{d \log(1/\delta)\log n}}n$ vertices,  
then every $n$-vertex graph of density at least $\delta$ contains a copy of $H$.

The technique also has applications in Ramsey theory.
Allen, Brightwell, and Skokan \cite{AlBrSk} proved that bounded degree
$n$-vertex $r$-chromatic graphs of bandwidth $o(n)$ has Ramsey number at most $(2r+4)n$.
We combine our technique with their framework to prove the following theorem
that brings us one step closer to the resolution of Burr and Erd\H{o}s's conjecture.

\begin{thm} \label{thm:ramsey_degenerate}
For all positive integers $r, d$ and positive real $\varepsilon$,
there exists $N$ such that the following holds for all $n \ge N$.
Let $H$ be an $n$-vertex $r$-chromatic $d$-degenerate graph of bandwidth at most $n^{1-\varepsilon}$.
Then $r(H) \le (2r+5)n$.
\end{thm}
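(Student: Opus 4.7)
The plan is to follow the framework of Allen, Brightwell, and Skokan \cite{AlBrSk} for bounded-degree graphs, substituting their use of the bandwidth theorem by the degenerate analogues established earlier in this paper (Theorems~\ref{thm:main} and~\ref{thm:main_bandwidth}). Fix an arbitrary 2-edge-coloring of $K_N$ with $N = (2r+5)n$. Apply Szemer\'edi's regularity lemma with a sufficiently small parameter $\eta$ to obtain an $\eta$-regular partition into clusters $U_1,\ldots,U_m$ of equal size, and form the two-edge-colored reduced graph $R$ on vertex set $[m]$: an edge $ij$ receives color $c$ if the bipartite subgraph of color $c$ between $U_i$ and $U_j$ is $\eta$-regular of density at least $\tfrac12 - \sqrt{\eta}$.

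Next, invoke the Allen--Brightwell--Skokan structural analysis of $R$: when $N \ge (2r+4+o(1))n$, one color of $R$ (say red) contains an $r$-partite subgraph $R_0$ with parts $P_1,\ldots,P_r$ satisfying $\sum_i |P_i| \ge (1-o(1))|V(R)|$, such that each bipartite pair $R_0[P_i, P_j]$ is super-regular of density at least some $\delta > 0$. Choosing $N = (2r+5)n$ rather than $(2r+4)n$ provides slack so that lifting the structure back to $V(K_N)$ yields an $r$-partite subgraph $G \subseteq K_N$ on vertex classes $V_i = \bigcup_{j \in P_i} U_j$ with $|V_i| \ge (1+\gamma)n$ for some $\gamma = \gamma(\varepsilon, r) > 0$, and every pair $(V_i, V_j)$ being $(\varepsilon^2, \delta)$-dense in red.

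Finally, embed $H$ into $G$ monochromatically in red. Take a proper $r$-coloring $V(H) = W_1 \cup \cdots \cup W_r$. Because $H$ has bandwidth at most $n^{1-\varepsilon}$, this coloring can be rebalanced — by slicing the bandwidth labeling into windows of length roughly $n^{1-\varepsilon/2}$ and permuting color labels within each window — so that $|W_i| \le (1-\varepsilon/2)|V_i|$ for every $i$. Theorem~\ref{thm:main}, applied to the $r$-partite $d$-degenerate graph $H$ (with partition $(W_i)$) and the $(\varepsilon^2,\delta)$-dense host $(G,(V_i))$, then produces the desired monochromatic copy of $H$ in $K_N$.

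The main technical hurdle is the structural lemma on $R$: this is where the factor $(2r+5)$ enters, and where the extra $n$ relative to the bounded-degree bound $(2r+4)n$ of Allen--Brightwell--Skokan is spent, in order to afford the almost-spanning (rather than spanning) concession in Theorem~\ref{thm:main}. Once the structure is in place, the embedding itself is a direct invocation of our blow-up lemma; the only routine subtlety is matching the sizes of the color classes of $H$ to those of the host parts $V_i$, which is handled by the bandwidth hypothesis as indicated above.
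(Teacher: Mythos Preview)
Your proposal has a genuine gap at the structural step. What Allen--Brightwell--Skokan actually furnish (and what the paper quotes as Lemma~\ref{lem:robust_ramsey}) is a monochromatic copy of the $r$-th power of a path $P_k^r$ in the reduced graph, with $k \ge \lfloor m/(2r+3)\rfloor$. They do \emph{not} provide an $r$-partition $P_1,\ldots,P_r$ of almost all of $R$ with each pair $R_0[P_i,P_j]$ super-regular, and no such statement holds in general. Moreover, even if some bipartite graph $R_0[P_i,P_j]$ in the reduced graph were dense, your lifting step would still fail: the union $V_i = \bigcup_{a\in P_i} U_a$ need not form an $(\varepsilon^2,\delta)$-dense pair with $V_j$, because a test set $X\subseteq V_i$ of size $\varepsilon^2|V_i|$ can live entirely inside a single cluster $U_a$ that has no red-dense partner in $P_j$. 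Density of $R_0[P_i,P_j]$ only guarantees many good cluster pairs, not that every cluster pair is good, and the definition of $(\varepsilon,\delta)$-dense requires the latter.

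Because the available monochromatic structure in the reduced graph is path-like rather than a single $r$-partite block, Theorem~\ref{thm:main} cannot be invoked directly. The paper instead converts the $P_k^r$ into a backbone $B_k^{r+1}$ (Lemma~\ref{lem:path_to_backbone}) and develops a dedicated embedding lemma, Lemma~\ref{lem:backbone_embedding}, which threads $H$ through the sequence of overlapping $r$-partite blocks along the backbone. This lemma is proved by iterating Lemma~\ref{lem:rolling_drc_general} rather than by a single call to Theorem~\ref{thm:main}; the paper explicitly remarks that the lack of a constrained version of Theorem~\ref{thm:main} forces this route. Your sketch correctly identifies the rebalancing of color classes via the bandwidth ordering (handled in the paper by Lemma~\ref{lem:lemmaH}), but the heart of the argument---embedding into a chain of blocks rather than one block---is missing.
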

 
Since a $d$-degenerate graph has chromatic number at most $d+1$, Theorem~\ref{thm:ramsey_degenerate}
implies that $r(H) \le (2d+7)n$ regardless of the chromatic number of $H$. 
Therefore it proves Burr and Erd\H{o}s's conjecture for
graphs of small bandwidth.
Furthermore, given a graph, we can add isolate vertices to obtain a graph with small bandwidth.
Hence a more precise form given in Theorem~\ref{thm:main_bandwidth_refine}
shows that Ramsey numbers of $d$-degenerate graphs are nearly linear. Such result
was previously obtained in \cite{FoSu09, KoSu} with similar but better bounds.

The phenomenon of degenerate graphs of small bandwidth having small Ramsey numbers
has been observed before in a slightly different context.
Let $\sigma(H)$ be the size of the smallest color class in any proper $\chi(H)$-coloring of the
vertices of $H$. A simple construction observed by Chv\'atal and Harary \cite{ChHa} (and
strengthened by Burr \cite{Burr}) shows that $r(G,H) \ge (\chi(H)-1)(|V(G)|-1) + \sigma(H)$ holds
for all graphs $H$ and connected graphs $G$. A graph $G$ is {\em $H$-good} if the equality holds.
Ramsey-goodness is an extensively studied topic in Ramsey theory (see, e.g., \cite{Burr, BuEr, NiRo, AlBrSk, CoFoLeSu, FiGrMoSa}).
The work most related to ours is that of Nikiforov and Rousseau \cite{NiRo}.
They showed that for each fixed $s$ and $d$, every sufficiently large 
$d$-degenerate $n$-vertex graph of bandwidth at most $n^{1-o(1)}$ is $K_s$-good
(to be more precise, their result requires the graph to have small separators instead of
small bandwidth). Their result establishes a bound on an off-diagonal Ramsey number, and Theorem~\ref{thm:ramsey_degenerate} can be seen as a corresponding diagonal result.

Note that there is (about) a factor of 2 difference between the Ramsey-goodness bound on $r(H)$ and the bound given in Theorem~\ref{thm:ramsey_degenerate} for large values of $r$. It is likely that 
the bound in Theorem~\ref{thm:ramsey_degenerate} can be improved to $r(H) \le (1+c_r)rn$,
where $c_r$ is a constant depending on $r$ that tends to zero as $r$ tends to infinity.
We remark that the bandwidth condition plays an important role 
in deciding the constant factor in the ramsey number of sparse graphs.  
This can be seen from a construction of Graham, R\"odl, and Ruci\'nski \cite{GrRoRu00} 
establishing that there exists a constant $c$ such that for every large enough $n$, there are $n$-vertex bipartite graphs with maximum degree at most $\Delta$ having Ramsey number at least $c^\Delta n$.

\medskip

The rest of the paper is organized as follows.
We begin by providing a brief outline of the proof of Theorem \ref{thm:main} in Section \ref{sec:outline}.
Then in Section \ref{sec:bipartite} we prove Theorem~\ref{thm:main_bipartite}, and
in Section \ref{sec:general} prove Theorem~\ref{thm:main}.
Using the tools developed in Section~\ref{sec:general}, we then proceed to Section~\ref{sec:application}
where we prove Theorems \ref{thm:main_bandwidth} and \ref{thm:ramsey_degenerate}.
We conclude the paper in Section \ref{sec:remarks} with some remarks.

\medskip

\noindent \textbf{Notation}. 
For an integer $m$, define $[m] := \{1,2,\ldots, m\}$ and for a pair of integers $m_1, m_2$,
define $[m_1, m_2] := \{m_1, \cdots, m_2\}$, $[m_1, m_2) := \{m_1, \cdots, m_2-1\}$,
$(m_1, m_2] := \{m_1 + 1, \cdots, m_2\}$, and $(m_1, m_2) := \{m_1+1, \cdots, m_2-1\}$.
For a set of elements $X$, we define $X^t = X \times \cdots \times X$ as the set of all $t$-tuples in $X$.
Throughout the paper, we will be using subscripts such as in $x_{\ref{thm:main}}$ to indicate 
that $x$ is the constant coming from Theorem/Lemma/Proposition \ref{thm:main}.

Let $G=(V,E)$ be an $n$-vertex graph.
For a vertex $v$ and a set $T$, we define $\deg(x; T)$ as the number of neighbors of $x$ in $T$, 
and define $\deg(x) := \deg(x; V)$.
Define $N(T) := \{ x : \{x,t\} \in E,\,\forall t \in T \}$ as the set of common neighbors of vertices in $T$.
For two vertices $v,w$ and a set of vertices $T$, we define $\codeg(v,w; T)$
as the number of common neighbors of $v$ and $w$ in $T$, and define $\codeg(v,w) := \codeg(v,w; V)$. 
For two sets $X$ and $Y$, define $E(X,Y) = \{(x,y) \in X \times Y \,:\, \{x,y\} \in E\}$
and $e(X,Y) = |E(X,Y)|$. Furthermore, define $e(X) = \frac{1}{2}e(X,X)$ as the number of 
edges in $X$. Let $d(X,Y) = \frac{e(X,Y)}{|X||Y|}$ be the density of edges bewteen
$X$ and $Y$.
If $G$ is bipartite with parts $V_1 \cup V_2$, then the {\em relative degree}
of a vertex $v_1 \in V_1$ is defined as $\frac{d(v_1)}{|V_2|}$, and of a vertex $v_2 \in V_2$
is defined as $\frac{d(v_2)}{|V_1|}$.
Recall that the relative minimum-degree of $G$ is the minimum of the relative-degree over
all vertices in $G$.

For a graph $H$, a {\em partial embedding} of $H$ on $V' \subseteq V(H)$ is an embedding of
$H[V']$ into $G$. For $V'' \supseteq V'$ an {\it extension to $V''$} of a partial 
embedding $f$ on $V'$ is an embedding $g : H[V''] \rightarrow G$ such that $g|_{V'} = f$.
We often abuse notation and denote the extended map using the same notation $f$.

%%
%%
%%
%%
%%	Outline of proof
%%
%%
%%
%%
\section{Outline of the Proof} \label{sec:outline}

We start by describing a brief outline of the proof.
Let $H$ be a $d$-degenerate graph, and let $G$ be a graph to which we wish to embed $H$.
Our embedding strategy is based on an iterative usage of dependent random choice, where
each round of iteration is similar to the one used by Kostochka and Sudakov \cite{KoSu},
and by Fox and Sudakov \cite{FoSu09}.

Suppose that $H$ and $G$ are both bipartite graphs with bipartition $W_1 \cup W_2$ and
$V_1 \cup V_2$, respectively.
Kostochka and Sudakov's strategy can be summarized in the following steps:
\begin{itemize}
  \setlength{\itemsep}{1pt} \setlength{\parskip}{0pt}
  \setlength{\parsep}{0pt}
  \item[1.] Randomly select $s'$ elements in $V_1$ to find a set $T \subseteq V_1$ for which all $(s+d)$-tuples of vertices in $A_2 := N(T)$ have at least $|V(H)|$ common neighbors in $V_1$.
  \item[2.] Randomly select $s$ elements in $A_2$ to find a set $S \subseteq A_2$ of size $|S| \le s$ for which all $d$-tuples of vertices in $A_1 := N(S)$ have at least $|V(H)|$ common neighbors in $A_2$.
  \item[3.] Embed $H$ into $A_1 \cup A_2$.
\end{itemize}
One key observation is that each $d$-tuple $Q$ of vertices in $A_2$ have at least $|V(H)|$ common 
neighbors in $A_1$. To see this, observe that $Q \cup S$ has at least $|V(H)|$ common neighbors in $V_1$ by Step 1 since $|Q \cup S| \le s + d$. Since $N(Q) \cap A_1 = N(Q) \cap N(S) \cap V_1 = N(Q \cup S) \cap V_1$, 
it implies that $Q$ has at least $|V(H)|$ common neighbors in $A_1$.
Hence in Step 3, one can embed vertices of $H$ one at a time, following the $d$-degenerate ordering
of $H$. Fox and Sudakov's strategy deviates from this in Step 1, where instead of imposing 
all $(s+d)$-tuples of vertices in $A_2$ have enough common neighbors, they bound the number
of $(s+d)$-tuples of vertices in $A_2$ having insufficient common neighbors. 
This then implies that most sets $S \subseteq A_2$ of size $|S| \le s$ has the property that 
for all $Q \subseteq A_2$ of size $|Q| \le d$, the set $S \cup Q$ has enough common neighbors.
The main advantage of this approach is that then we can take $s' = s$;
such difference results in an improvement on the bound.
Thus in Step 2, we just need to choose such `typical' $S$. 
The bound on the number of vertices of $H$ comes from the bound on the sizes of $A_1, A_2$, and the number of common neighbors of $d$-tuples.

Now suppose that $H$ is a larger graph but has small bandwidth, say $\beta$.
By using the strategy above, we can embed the initial segment of $H$ into $A_1 \cup A_2$ but
will at some point run out of space. Say that we have enough space to embed $2\beta$ vertices
of $H$. Start by embedding the initial $\beta$ vertices of $H$ into $A_1 \cup A_2$.
The key idea is to halt the embedding for a moment, 
and find another pair of sets $B_1 \cup B_2$ as above with the following 
additional property:
\begin{quote}
	(*) All $d$-tuples of vertices in $A_1$ have at least $2\beta$ common neighbors in $A_2 \cap B_2$, and all $d$-tuples of vertices in $A_2$ have at least $2\beta$ common neighbors in $A_1 \cap B_1$.
\end{quote}
Once we find such a pair we will embed the next $\beta$ vertices into $A_1 \cap B_1$ and $A_2 \cap B_2$
by invoking this property.
At this point, we can remove the image of the first $\beta$ vertices of $H$ from the graph $G$.
This is because there are no edges between the initial segment of $\beta$ vertices of $H$
and the vertices with label greater than $2\beta$. Note that we are left with a pair
$B_1 \cup B_2$ with $\beta$ vertices of $H$ embedded into it.
Hence repeating this process will eventually result in an embedding of $H$.

Thus the main challenge is to find a pair of sets $B_1 \cup B_2$ with the property listed above.
This can be done by first following the modified Steps 1 and 2 so that we impose
the additional property that `most' $(s+d)$-tuples
in $A_2$ have many common neighbors in $A_1$, and vice versa. Then,
\begin{itemize}
  \setlength{\itemsep}{1pt} \setlength{\parskip}{0pt}
  \setlength{\parsep}{0pt}
  \item[4.] Randomly select $s$ elements in $A_1$ to find a `typical' set $T' \subseteq A_1$ for which `most' $(s+d)$-tuples of vertices in $B_2 := N(T')$ have at least $2\beta$ common neighbors in $V_1$.
  \item[5.] Randomly select $s$ elements in $A_2 \cap B_2$ to find a `typical' set $S' \subseteq A_2 \cap B_2$ of size $|S'| \le s$ for which all $d$-tuples of vertices in $B_1 := N(S')$ have at least $2\beta$ common neighbors in $A_2\cap B_2$.
\end{itemize}
Note that since $T'$ is a `typical' set, for all $d$-tuple $Q$ of vertices in $A_1$, the set
$Q \cup T'$ has at least $2\beta$ common neighbors in $A_2$, i.e., $Q$ has at least
$2\beta$ common neighbors in $A_2 \cap N(T') = A_2 \cap B_2$. Similarly since $S'$ is a `typical' set, 
all $d$-tuples in $A_2$ will have at least $2\beta$ common neighbors in $A_1 \cap B_1$, thus
establishing (*).

One very important technical detail needs to be addressed. In Step 5, for our strategy to succeed, 
it is important that $A_2 \cap B_2$ is considerably larger than $2\beta$ so that a $d$-tuple
of vertices having less than $2\beta$ common neighbors in $A_2 \cap B_2$ is an uncommon situation.
This additional constraint has a chain effect on our proof.
First in Step 4, we need to impose this additional condition of $A_2 \cap B_2$ being large when choosing $A_1$.
Second, the size of $A_2 \cap B_2$ depends on the number of edges between $A_1$ and $A_2$, and thus in
Steps 1 and 2 we must have chosen $A_1$ and $A_2$ so that $d(A_1, A_2)$ is large enough.
Third, to prepare for the next step, when choosing $B_1$ and $B_2$ we must also impose that $d(B_1, B_2)$ is large enough. In the end, it turns out that we need to keep track of certain copies
of $C_4$ (or $K_{2,2}$) across various sets, and this is where having $G$ to be an
$(\varepsilon,\delta)$-dense graph becomes necessary. Obtaining such estimates become
complicated because our technique (dependent random choice) is based on the first moment method, and we must encode all this information into a single equation.
For bipartite graphs, there is a way to slightly modify Steps 4, 5 and the embedding strategy
to avoid using the $(\varepsilon,\delta)$-regular condition on $G$ (we do not gain in terms of simplicity). 
The reason we gave a description as above is because it works for non-bipartite graphs $H$ as well. 
For $r$-partite graphs, instead of $C_4$
we will be counting copies of complete $r$-partite graphs with $2$ vertices in each part.

\medskip

Suppose that $H$ is a $d$-degenerate graph with bandwidth $\beta$.
Note that since the given graph $H$ is $d$-degenerate there exists an ordering in which
all vertices have at most $d$ neighbors that precede itself, and
since $H$ has bandwidth $\beta$ there exists an ordering in which
all adjacent pairs are at most $\beta$ apart from each other. 
In the strategy sketched above, we implicitly used the following lemma
which asserts that there exists an ordering of the vertices achieving both.
A labelling of an $m$-vertex graph $H$ by $[m]$ is {\it $d$-degenerate} if each vertex
has at most $d$ neighbors preceding itself, and {\it $\beta$-local} if all adjacent
pairs are at most $\beta$ apart.

\begin{lem} \label{lem:bandwidth_degenerate}
Let $H$ be a $d$-degenerate graph with bandwidth $\beta$. 
Then there exists a $5d$-degenerate $\beta \log_2(4\beta)$-local labelling of the vertices 
of $H$. 
\end{lem}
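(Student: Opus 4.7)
The plan is to fix a bandwidth-$\beta$ labelling $\sigma$ of $H$ and construct $\tau$ via a right-to-left greedy peeling. First I would dispatch the easy case $\beta \le 5d$: the labelling $\sigma$ itself is automatically $\beta$-degenerate (each vertex has only the at most $\beta$ positions preceding it as possible earlier neighbors) and $\beta$-local, hence $5d$-degenerate and $\beta\log_2(4\beta)$-local, and we are done.

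For the main case $\beta > 5d$, I would define $v_1, v_2, \ldots, v_m$ iteratively, setting $\tau(v_i) = m-i+1$: at step $i$, let $H_i$ be the subgraph of $H$ on the not-yet-peeled vertices, and let $v_i$ be the vertex of $H_i$ with the largest $\sigma$-value subject to $\deg_{H_i}(v_i) \le 5d$. The $5d$-degeneracy of $\tau$ is then immediate, since the $\tau$-earlier neighbors of $v_i$ are precisely its $H_i$-neighbors.

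The key existence claim is that such a $v_i$ lies in the rightmost $\beta$ vertices of $H_i$ (by $\sigma$-rank). I would let $X$ denote those $\beta$ rightmost vertices and $Y$ denote the $H_i$-vertices whose $\sigma$-positions lie in the $\beta$ positions immediately to the left of $X$'s leftmost member. The bandwidth bound forces every $H_i$-neighbor of a vertex in $X$ to lie in $X \cup Y$, while the $d$-degeneracy of the subgraph $H_i[X\cup Y]$ gives $|E(H_i[X \cup Y])| \le 2d\beta$, so the double-count $\sum_{v \in X}\deg_{H_i}(v) = 2e(X)+e(X,Y) \le 4d\beta$ yields a vertex in $X$ with $H_i$-degree at most $4d \le 5d$. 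This forces the rightmost vertex of degree at most $5d$ to lie in $X$, and in particular guarantees that $v_i$ always exists.

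The hard part is the locality analysis. Given an edge $\{u,w\}$ with $u$ peeled first at step $i_u$ and $w$ at step $i_w$, the fact that $u$ lies in the rightmost $\beta$ ranks of $H_{i_u}$ together with $|\sigma(u)-\sigma(w)|\le \beta$ shows that $w$ has rank at most $2\beta$ in $H_{i_u}$. Since the $\beta$-th rightmost $\sigma$-position of $H_i$ decreases by at least one per peel step, $w$'s rank drops to at most $\beta$ within a bounded number of rounds; thereafter $w$ is always a candidate. The obstacle is that $w$ may still be passed over many times if its $H_i$-degree exceeds $5d$ for a long stretch. To bound this, I would use that each peeled neighbor of $w$ reduces $\deg_{H_i}(w)$ by one, together with a charging/potential argument that iterates a ``halve the excess degree, halve the excess rank'' step, incurring a multiplicative factor of $O(\log \beta)$ across the $O(\log_2(4\beta))$ levels needed to bring $w$ all the way to the front of the rightmost band. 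This yields $|i_u - i_w| \le \beta \log_2(4\beta)$, completing the locality bound and hence the proof.
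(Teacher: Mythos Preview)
Your construction---greedy right-to-left peeling, always removing the $\sigma$-rightmost remaining vertex of small degree---is the same as the paper's, and your existence argument (that the peeled vertex lies among the top $\beta$ by $\sigma$-rank) matches the paper's lower bound $\sigma(v)-\pi(v)\ge -\beta$. The gap is in the locality analysis.

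Your final paragraph does not supply a mechanism that forces $w$ to be peeled within $O(\beta\log\beta)$ steps of $u$. Once $w$ enters the top $\beta$ it may still have degree up to $2\beta$, and its neighbors may themselves have high degree and resist peeling; nothing in a ``halve the excess degree, halve the excess rank'' slogan explains why this cascade terminates in $\beta\log_2(4\beta)$ rounds rather than, say, $\Theta(\beta^2)$. The paper does not track $w$ forward in time at all. It proves the pointwise bound $-\beta\le\sigma(v)-\pi(v)\le\beta\log_2\beta$ for every $v$, and locality then follows by the triangle inequality. For the upper bound it argues by contradiction: if some $v_t$ has $\sigma(v_t)-\pi(v_t)>\beta\log_2\beta$, there is an earlier step $t'$ at which a vertex with $\sigma$-value $\le\pi(v_t)$ was peeled, so at step $t'$ every still-present vertex with $\sigma$-value above $\sigma(v_{t'})$ must have had more than $5d$ \emph{preceding} neighbors. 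Combined with $d$-degeneracy this drives a one-sided expansion: starting from $N_0=\{v_t\}$ and repeatedly passing to $\sigma$-preceding neighborhoods yields sets $N_i$ with $|N_{i+1}'|>4|N_i|$, hence $|N_i|\ge 2^i$ while each $N_i$ stays inside a single $\sigma$-interval of length $\beta$---impossible at $i=\log_2\beta$. This doubling argument is the missing ingredient; your proposal names a potential function without exhibiting one. A secondary point: the paper peels on \emph{preceding}-degree $\le 5d$ rather than total degree, precisely so that the expansion is one-directional and each $N_i$ occupies at most two $\beta$-intervals; with your total-degree rule the expansion spills across three intervals per step and the constants degrade.
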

\begin{proof}
Let $H$ be an $m$-vertex $d$-degenerate graph with bandwidth $\beta$ and denote $V = V(H)$. 
First label the vertices by $[m]$ so that adjacent vertices are at most $\beta$ apart. Call this labelling $\sigma : V(H) \rightarrow [m]$. We will construct another labelling $\pi : V(H) \rightarrow [m]$ which
has the desired property, based on an iterative algorithm where for $t = 0, 1, \ldots, m-1$, the $t$-th step of the algorithm defines the pre-image $\pi^{-1}(m-t)$. Suppose that we are at the $t$-th step of the algorithm
and let $V_t = V(H) \setminus \pi^{-1}(\{m-t+1, m-t+2, \ldots, m\})$.
Choose the vertex $v_t \in V_t$ having 
at most $5d$ neighbors preceding itself in $V_t$ in the labelling $\sigma$, with maximum $\sigma(v_t)$
(note that such vertex exists since $H$ is $d$-degenerate). 
Define $\pi(v_t) = m-t$. 
The definition implies that in the ordering $\pi$, each vertex has at most $5d$ neighbors
preceding itself. We claim that
\[
	-\beta \le \sigma(v) - \pi(v) \le \beta \log_2 \beta
\]
for all $v \in V(H)$. The lemma follows from this claim since if $\{v,w\}$ is an edge, then
\begin{align*}
	\pi(v) - \pi(w) 
	&\le (\pi(v) - \sigma(v)) + (\sigma(v) - \sigma(w)) + (\sigma(w) - \pi(w)) \\
	&\le \beta + \beta \log_2\beta + (\sigma(v) - \sigma(w))
	\le \beta \log_2 (4\beta).
\end{align*}

Fix a value of $t$ and let $M_t$ be the set of at most $2\beta$ largest labelled vertices of $V_t$ (according to $\sigma$). Note that $\sigma(M_t) \subseteq (m-t-2\beta, m]$.
Since $H$ is $d$-degenerate, it follows that $M_t$ spans at most $|M_t|d$ edges.
Also, by the bandwidth condition, the (at most) $\beta$ largest labelled vertices in $M_t$ have all its
neighbors in $V_t$ inside $M_t$. Therefore there must exist a vertex among them
having at most $\frac{|M_t|d}{\beta} \le 2d$ neighbors preceding itself in $\sigma$ in $V_t$. This implies that $\sigma(v_t) \ge m-t-\beta = \pi(v_t) - \beta$,
or equivalently, $\sigma(v_t) - \pi(v_t) \ge -\beta$.

Suppose that $\sigma(v_t) - \pi(v_t) > \beta \log_2 \beta$ for some $t$. 
Note that there exists $t' < t$ such that $\sigma(v_{t'}) \le m - t$ as otherwise 
$\{\sigma(v_1), \cdots, \sigma(v_{t-1})\} = (m-t, m]$ and
$\sigma(v_t) \le m-t = \pi(v_t)$.
Observe that  $\sigma(v_{t'}) \le m-t = \pi(v_t) \le \sigma(v_t) - \beta \log_2 \beta$.
Note that each $w \in V_t$ with $\sigma(w) > \sigma(v_t)$ has at least $5d$ neighbors
preceding itself in $\sigma$ in $V_t$ since otherwise we would have chosen $w$ instead 
of $v_t$. 
Define $I_j = (\sigma(v_{t})-j\beta, \sigma(v_{t})-(j-1)\beta]$ for $j \ge 0$, and $N_0 = \{v_t\}$.
Assume that a set $N_i$ which is a subset of the $i$-th neighborhood of $v_t$, 
satisfying $|N_i| \ge 2^i$ and $N_i \subseteq I_{j_i}$ for some $j_i \le i$ is given.
Define $N_{i+1}' = \{v \in V_{t'} : \exists w \in N_i \cap N(v),\,\sigma(v) < \sigma(w) \}$.
Note that $N_{i+1}' \cup N_i$ spans less than $(|N_{i+1}'| + |N_i|)d$ edges by the degeneracy condition.
Furthermore if $i \le \log \beta$, then all vertices in $N_i$
succeed $v_{t'}$ in $\sigma$ and thus have at least $5d$ neighbors preceding itself in $\sigma$. 
Therefore $N_{i+1}' \cup N_i$ spans at least $|N_i|\cdot 5d$ edges. Hence
\[
	|N_i| \cdot 5d < (|N_{i+1}'| + |N_i|)d,
\]
and we have $|N_{i+1}'| > 4|N_i|$. Since $N_i \subseteq I_{j_i}$, we have $N_{i+1}' \subseteq I_{j_i} \cup I_{j_i + 1}$. Therefore either $N_{i+1}' \cap I_{j_i}$ or $N_{i+1}' \cap I_{j_i+1}$ has size greater than $2|N_i|$.
Define $N_{i+1}$ as the intersection having size greater than $2|N_i| \ge 2^{i+1}$.
This is a contradiction for $i = \log \beta$ since each interval has size at most $\beta$. 
Hence the claim $\sigma(v_t) - \pi(v_t) \le \beta \log_2 \beta$ holds.
\end{proof}

%%
%%
%%
%%
%%	Bipartite Graphs
%%
%%
%%
%%
\section{Bipartite graphs} \label{sec:bipartite}

Throughout this section, fix a bipartite graph $H$.
We will consider the problem of embedding $H$ into another bipartite graph $G$.
We assume that the bipartition $W_1 \cup W_2$ of $H$ and $V_1 \cup V_2$ of $G$ 
are given. We restrict our attention
to finding an embedding $f : V(H) \rightarrow V(G)$ for which $f(W_1) \subseteq V_1$ and
$f(W_2) \subseteq V_2$ even when we do not explicitly refer to the bipartition of $H$.

A set $X$ is {\it $(d,\beta)$-common into $Y$} if every $d$-tuple of vertices of $X$
has at least $\beta$ common neighbors in $Y$.
A pair of sets $(X,Y)$ is {\it $(d,\beta)$-common} if $X$ is $(d,\beta)$-common into $Y$
and $Y$ is $(d,\beta)$-common into $X$.
Let $G$ be a bipartite graph with partition $V_1 \cup V_2$. 
The usefulness of $(d,\beta)$-common property has been noticed over years
and researchers developed powerful methods such as
dependent random choice to find subsets that are $(d,\beta)$-common
(see, e.g., the survey paper of Fox and Sudakov \cite{FoSu11}). 
The following lemma shows how we will use the $(d,\beta)$-common property.

\begin{lem} \label{lem:extend_partial}
Let $G$ be a bipartite graph with bipartition $V_1 \cup V_2$, and let $H$ be a 
bipartite graph with a $d$-degenerate $\beta$-local vertex labelling by $[\beta'+2\beta]$.
Suppose that pairs of sets $(A_1, A_2)$ and $(B_1, B_2)$ with
$A_1, B_1 \subseteq V_1$ and $A_2, B_2 \subseteq V_2$ satisfy the following property:
\begin{itemize}
  \setlength{\itemsep}{1pt} \setlength{\parskip}{0pt}
  \setlength{\parsep}{0pt}
  \item[(i)] $A_1$ is $(d,\beta'+2\beta)$-common into $A_2$,
  \item[(ii)] $A_2$ is $(d,\beta'+2\beta)$-common into $A_1 \cap B_1$, and
  \item[(iii)] $A_1 \cap B_1$ is $(d,\beta'+2\beta)$-common into $A_2 \cap B_2$.
\end{itemize}
Then every partial embedding  $f : [\beta'] \rightarrow V(G)$ can be extended
into an embedding $f : [\beta'+2\beta] \rightarrow V(G)$ where all vertices with label
larger than $\beta'$ are mapped into $B_1 \cup B_2$.
\end{lem}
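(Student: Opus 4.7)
The plan is to extend $f$ greedily, processing vertices in the labelling order $\beta'+1, \beta'+2, \ldots, \beta'+2\beta$. For each new vertex $v$ with label $i$, I aim to place $f(v) \in A_1 \cap B_1$ when $v \in W_1$ and $f(v) \in A_2 \cap B_2$ when $v \in W_2$; this automatically lands $f(v) \in B_1 \cup B_2$ as required by the conclusion. The two structural ingredients that drive the argument are $d$-degeneracy, which gives $|P_v| \le d$ for the set $P_v$ of already-embedded neighbors of $v$, and $\beta$-locality, which forces $P_v \subseteq [i-\beta, i-1]$.

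The key inductive invariant is that $f(P_v)$ lies in the appropriate ``source'' set so that (ii) or (iii) can be invoked. When $v \in W_1$, every vertex of $P_v$ lies in $W_2$, and by the inductive scheme (together with the standing compatibility assumption that the $W_2$-predecessors from $[\beta']$ already sit inside $A_2$) we have $f(P_v) \subseteq A_2$. Extending $f(P_v)$ to a $d$-tuple in $A_2$ by repeating an entry only shrinks the common neighborhood, so condition (ii) yields at least $\beta'+2\beta$ common neighbors of $f(P_v)$ inside $A_1 \cap B_1$. Since at most $i-1 \le \beta'+2\beta-1$ vertices of $G$ have been used as images so far, at least one such common neighbor remains unused and we take it to be $f(v)$. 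When $v \in W_2$, the symmetric argument uses condition (iii): inductively $f(P_v) \subseteq A_1 \cap B_1$, and we pick an unused common neighbor inside $A_2 \cap B_2$.

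The main obstacle is verifying the invariant at the boundary between the partial embedding and the new phase. For labels $i > \beta'+\beta$, locality forces $P_v \subseteq (\beta', i-1]$ and the invariant follows from the inductive scheme alone. For labels $i \in (\beta', \beta'+\beta]$, some predecessors reside in $[\beta']$, and here the argument depends on the partial embedding having already placed its last $\beta$ vertices inside $A_1 \cap B_1$ (for $W_1$) and inside $A_2$ (for $W_2$); this compatibility is itself an invariant that is propagated across successive applications of the lemma by the iterative procedure sketched in Section~\ref{sec:outline}. Note that condition (i) is not used in this extension argument itself; it participates in the outer construction of the sets $A_1, A_2, B_1, B_2$. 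The only arithmetic needed is that $\beta'+2\beta$, the parameter appearing in (i)--(iii), strictly dominates the image count at every step, which holds by construction.
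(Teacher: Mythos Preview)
Your greedy scheme and the $W_1$ case are fine: predecessors land in $A_2$ (whether from $[\beta']$ or from the extension) and condition~(ii) supplies a target in $A_1\cap B_1$. The gap is in the $W_2$ case.

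You want every $W_2$-vertex with label in $(\beta',\beta'+2\beta]$ to be placed in $A_2\cap B_2$ via condition~(iii), which requires $f(P_v)\subseteq A_1\cap B_1$. For predecessors with label $>\beta'$ this holds by your own rule, but for predecessors with label $\le\beta'$ you assert that the partial embedding ``already placed its last $\beta$ vertices inside $A_1\cap B_1$ (for $W_1$)'' and that this is ``propagated across successive applications''. It is not: the set $B_1$ is produced \emph{after} the partial embedding is handed to the lemma, so the previous round could not have targeted it. In the outer iteration, the previous round's output lands in the previous $B_1$, which becomes the \emph{current} $A_1$ --- not the current $A_1\cap B_1$. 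Hence for $W_1$-predecessors coming from $[\beta']$ you only know their images lie in $A_1$, and condition~(iii) is unavailable.

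This is exactly why condition~(i) is present, and the paper does use it --- your claim that (i) ``is not used in this extension argument'' is where the proposal goes wrong. The paper splits the $W_2$ case by label: if $t\le\beta'+\beta$ (the paper writes ``$d$'' here, an evident typo for $t$), then only $f(P_v)\subseteq A_1$ is available, so condition~(i) is invoked and $f(t)$ lands in $A_2$; if $t>\beta'+\beta$, then $\beta$-locality forces $P_v\subseteq(\beta',t)$, whence $f(P_v)\subseteq A_1\cap B_1$ by the $W_1$ rule, and condition~(iii) places $f(t)\in A_2\cap B_2$. Strictly speaking this delivers a slightly weaker conclusion than the lemma states (the first-half $W_2$-vertices are only guaranteed to lie in $A_2$), but that weaker conclusion is precisely what the outer iteration needs, since by locality the next round only looks back at the last $\beta$ labels.
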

\begin{proof}
We will extend the partial embedding one vertex at a time according to the order given 
by the vertex labelling.
Suppose that the partial embedding has been defined over $[t-1]$ for some $t > \beta'$, 
and we are about to embed vertex $t$. 
There are at most $d$ neighbors of $t$ that precede $t$. 
Call these vertices $v_1, v_2, \ldots, v_{d'}$ for some $d' \le d$, and let $w_i = f(v_i)$ be their images.
First suppose that $\{w_i\}_{i=1}^{d'} \subseteq A_2$. Then
by the given condition, since $d' \le d$, the $d'$-tuple $(w_1, w_2, \ldots, w_{d'})$
has at least $\beta'+2\beta$ common neighbors in $A_1 \cap B_1$. 
Since $H$ consists of at most $\beta'+2\beta$ vertices, 
there exists at least one vertex $x \in A_1 \cap B_1$ that is adjacent to 
all vertices $w_1, w_2, \ldots, w_{d'}$
and is not an image of $f$. By defining $f(t) = x$, we can extend $f$ to $[t]$.
Now suppose that $\{w_i\}_{i=1}^{d'} \subseteq A_1$.
If $d \le \beta'+\beta$, then the same strategy as above allows us to embed $f(t)$ into $B_1$.
If $d > \beta'+\beta$, then note that by the bandwidth condition
$\{v_i\}_{i=1}^{d'} \subseteq (\beta',\beta'+2\beta]$ and thus
$\{w_i\}_{i=1}^{d'} \subseteq A_1 \cap B_1$. Then the argument above shows that
we can embed $f(t)$ into $A_2 \cap B_2$.
In the end, we obtain the desired extension of $f$.
\end{proof}

Suppose that a partial embedding $f : [\beta] \rightarrow A_1 \cup A_2$ is given.
Our proof proceeds by finding a pair of subsets $(B_1, B_2)$ and applying
Lemma \ref{lem:extend_partial} to extend $f$ 
so that the next segment embeds into $B_1 \cup B_2$. 
Afterwards, we repeat the same process to further extend $f$ until
we find an embedding of the whole graph $H$.
Thus the key step of the proof is to find such $(B_1, B_2)$ given $(A_1, A_2)$.

We introduce one important concept before stating the lemma that serves this purpose.
Given a pair of sets $X$ and $Y$ in a graph $G$,
define the {\it $(p,d,\beta)$-potential of $X$ in $Y$} as the number of 
$(p+d)$-tuples in $X$ that have fewer than $\beta$ common neighbors in $Y$.
We say that $X$ has {\it $\lambda$-negligible $(p,d,\beta)$-potential in $Y$}
if its $(p,d,\beta)$-potential in $Y$ is less than $\lambda^{p-1}$.
We may simply denote the $(p,d,\beta)$-potential as $p$-potential when
$d$ and $\beta$ are clear from the context.
Note that if $X$ has $\lambda$-negligible $0$-potential, then the number of $d$-tuples
in $X$ that have fewer than $\beta$ common neighbors is less than $\lambda^{-1} < 1$,
i.e., $X$ is $(d,\beta)$-common into $Y$.
Conversely for all $p \ge 0$, if $X$ is $(p+d,\beta)$-common into $Y$, then
$X$ has $\lambda$-negligible $(p,d,\beta)$-potential in $Y$.
In fact, the reason we have introduced $p$-potential is because it is `easier'
to impose small $(p,d,\beta)$-potential than to impose $(p+d,\beta)$-commonness.
In the bipartite setting, introducing this additional concept results in a better bound
and the theorem can be proved even without the concept
(this is how the bound on the Ramsey number of degenerate graphs of Fox and Sudakov \cite{FoSu09} 
differs from that of Kostochka and Sudakov \cite{KoSu}).
However for non-bipartite graphs we heavily rely on this concept and thus we
introduce it here to prepare the reader for the more technical part that will later come
(and for the better bound).

\begin{lem} \label{lem:rolling_drc_degenerate}
For natural numbers $n, d$ and positive real numbers $\varepsilon, \gamma, \delta$, define
$s = \sqrt{\frac{d \log n}{\log(2/\delta)}}$ and suppose that 
$\lambda \le \left(\frac{\delta}{2}\right)^{5s}n$, 
$\beta \le \left(\frac{\lambda}{n}\right)^3 \gamma n$, and $\varepsilon < \frac{\delta}{8}$.
For sufficiently large $n$, 
let $G$ be a bipartite graph on at most $n$ vertices with parts $V_1 \cup V_2$ satisfying
$|V_1|, |V_2| \ge \gamma n$, where at least $(1-\varepsilon)|V_1|$ vertices in $V_1$ 
have degree at least $\delta|V_2|$ in $V_2$, and vice versa for vertices in $V_2$.
Suppose that sets $A_1 \subseteq V_1$ and $A_2 \subseteq V_2$ for which
$A_2$ contains at least $\frac{1}{4}\left(\frac{\delta}{2}\right)^{2s}|V_2|$ vertices of degree at least $\frac{\delta}{2}|V_1|$ 
is given.
Then there exist sets $B_1 \subseteq V_1$ and $B_2 \subseteq V_2$
satisfying the following properties:
\begin{itemize}
  \setlength{\itemsep}{1pt} \setlength{\parskip}{0pt}
  \setlength{\parsep}{0pt}
  \item[(i)] $B_1$ is $(d,2\beta)$-common into $A_2 \cap B_2$,
  \item[(ii)] $B_2$ has $\lambda$-negligible $(s,d,2\beta)$-potential in $B_1$, and
  \item[(iii)] $B_2$ contains at least $\frac{1}{2}\left(\frac{\delta}{2}\right)^{2s}|V_2|$ vertices of degree at least
  $\delta|V_1|$.
\end{itemize}
Furthermore, if $A_2$ has $\lambda$-negligible $(s,d,\beta)$-potential in $A_1$, then
\begin{itemize}
  \item[(iv)] $A_2$ is $(d,\beta)$-common into $A_1 \cap B_1$,  
\end{itemize}
\end{lem}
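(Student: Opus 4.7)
\emph{Plan.} I would prove this by a two-round dependent random choice: Round~1 chooses $B_2 = N(T) \cap V_2$ for a uniformly random tuple $T \in A_1^s$, and Round~2 chooses $B_1 = N(S) \cap V_1$ for a uniformly random tuple $S \in (A_2 \cap B_2)^s$. Each conclusion is then verified by a first-moment calculation: I bound the expected number of ``bad'' configurations and show it is below the allowed budget, so some $T, S$ work.

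In Round~1, I aim to pin $T$ down so that, simultaneously, $B_2$ contains many vertices of degree at least $\delta|V_1|$ and $B_2$ has few $(2s+d)$-tuples $R$ with $|N(R)\cap V_1|<2\beta$. The first follows from $\BBE[\mathbf{1}_{v\in B_2}] = (\deg(v;A_1)/|A_1|)^s$ applied to the $\tfrac14(\delta/2)^{2s}|V_2|$ vertices in $A_2$ of degree at least $\tfrac{\delta}{2}|V_1|$, together with the fact that the bad $\varepsilon$-fraction of low-degree vertices in $V_1$ is small enough ($\varepsilon<\delta/8$) not to hurt the geometric mean; this gives~(iii) with room to spare and, as a byproduct, $|A_2\cap B_2| \ge \tfrac12(\delta/2)^{2s}|V_2|$. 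The second is the standard DRC bound $\sum_R (|N(R)\cap A_1|/|A_1|)^s \le n^{2s+d}(2\beta/|A_1|)^s$, which is much smaller than $\lambda^{2s-1}$ by the parameter choices $s = \sqrt{d\log n/\log(2/\delta)}$, $\lambda \le (\delta/2)^{5s}n$, and $\beta \le (\lambda/n)^3\gamma n$.

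In Round~2 I verify each of (i), (ii), and (iv) as an expectation bound. For~(i), the expected number of $d$-tuples $Q \in B_1^d$ with $|N(Q)\cap A_2\cap B_2|<2\beta$ is at most $\sum_Q (|N(Q)\cap A_2\cap B_2|/|A_2\cap B_2|)^s \le n^d(2\beta/|A_2\cap B_2|)^s$, which is $<1$ by the Round~1 lower bound on $|A_2 \cap B_2|$. For~(ii), the bad $(s+d)$-tuples $P \subseteq B_2$ are those with $|N(P\cup S)\cap V_1|<2\beta$; summing over $S$ and interpreting $(P,S)$ as a single $(2s+d)$-tuple in $B_2$ (since $A_2\cap B_2\subseteq B_2$) reduces this expectation to $|A_2\cap B_2|^{-s}$ times the $(2s+d)$-tuple count controlled by Round~1, giving a bound $<\lambda^{s-1}$. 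For~(iv), under the added hypothesis, every bad pair $(Q,S)\in A_2^d \times (A_2\cap B_2)^s$ makes $Q\cup S$ an $(s+d)$-tuple in $A_2$ with $|N(\cdot)\cap A_1|<\beta$, so the expected number of bad $Q$ is at most $|A_2\cap B_2|^{-s}\cdot\lambda^{s-1}<1$. Adding the three expected counts gives a valid choice of $S$ achieving all four properties.

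\emph{Main obstacle.} The central difficulty is the parameter bookkeeping: Round~1 must simultaneously produce a $B_2$ large enough in high-degree vertices (so that $|A_2\cap B_2|$ gives $S$ a non-trivial sample space in Round~2) \emph{and} with a low enough count of bad $(2s+d)$-tuples (so that~(ii) comes out right), all within the tight budget $\lambda\le(\delta/2)^{5s}n$. The choice $s=\sqrt{d\log n/\log(2/\delta)}$ is the algebraic heart: it balances $n^d$ against $(\delta/2)^{s^2}$, so that $s$ rounds of DRC exactly compensate for the trivial $n^d$ count of $d$-tuples, which is what makes the all-$d$-tuple conclusions~(i) and~(iv) fall under unit expectation.
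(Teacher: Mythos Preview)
Your overall two-round dependent random choice scheme is right, and Round~2 is essentially the paper's argument. But Round~1 has a genuine gap: you sample $T$ from $A_1^s$, yet the hypotheses give \emph{no} lower bound on $|A_1|$ whatsoever---the only size assumption is on the set of high-degree vertices inside $A_2$. With $A_1$ potentially tiny (or empty), every expectation you write in Round~1 collapses: the denominator $|A_1|$ in $(\deg(v;A_1)/|A_1|)^s$ is uncontrolled, and in any case a vertex $v\in A_2$ with $\deg(v;V_1)\ge\tfrac{\delta}{2}|V_1|$ need not have large $\deg(v;A_1)$, so your lower bound on $\BBE[|A_2\cap B_2|]$ fails. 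Likewise your bad-tuple bound $n^{2s+d}(2\beta/|A_1|)^s$ is useless without $|A_1|\ge\Omega(\gamma n)$. The paper fixes this by sampling $T_1$ from $V_1'$, the set of vertices in $V_1$ of degree at least $\delta|V_2|$, which is guaranteed to have size $\ge(1-\varepsilon)|V_1|\ge(1-\varepsilon)\gamma n$.

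Two further points once you switch to sampling from $V_1'$. First, property~(iii) asks for many vertices in $B_2$ of degree at least $\delta|V_1|$, not $\tfrac{\delta}{2}|V_1|$; the vertices in $A_2'$ that you track only have degree $\ge\tfrac{\delta}{2}|V_1|$, so you must separately count $|B_2\cap V_2'|$ where $V_2'=\{v\in V_2:\deg(v)\ge\delta|V_1|\}$. Second, you now need a \emph{single} choice of $T$ that simultaneously makes $|B_2\cap V_2'|$ large (for~(iii)), $|B_2\cap A_2'|$ large (so Round~2 has a sample space), and $\eta_{2s}(B_2,V_1)$ small. First moments alone do not give this; the paper packages everything into one random variable
\[
\mu=\BBE\Bigl[|\hat B_2\cap A_2'|\cdot|\hat B_2'|-\Bigl(\tfrac{\delta}{2}\Bigr)^{2s}|V_2'|\cdot|\hat B_2\cap A_2'|-\tfrac{n^2}{\lambda^{2s-1}}\eta_{2s}(\hat B_2,V_1)\Bigr],
\]
where the product $|\hat B_2\cap A_2'|\cdot|\hat B_2'|$ is the key device: its positivity after subtracting the middle term forces $|B_2'|\ge(\delta/2)^{2s}|V_2'|$ \emph{independently} of $|A_2'|$, while the product itself being large forces $|B_2\cap A_2'|$ large. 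The computation of $\BBE[|\hat B_2\cap A_2'|\cdot|\hat B_2'|]$ goes through a path-of-length-two count (see~\eqref{eq:intersection_size}--\eqref{eq:mu_1}), which is where the restriction to $V_1',V_2'$ and the hypothesis $\varepsilon<\delta/8$ actually do work.
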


The proof of Lemma~\ref{lem:rolling_drc_degenerate} will be given in the next subsection.
We first prove the main result of this section using Lemmas \ref{lem:extend_partial} and \ref{lem:rolling_drc_degenerate}.
A bipartite graph $G$ with bipartition $(V_1, V_2)$ is {\it $(\alpha, \varepsilon, \delta)$-degree-dense}
if for every pair of subsets $(X_1, X_2)$ satisfying $X_i \subseteq V_i$ and $|X_i| \ge \alpha |V_i|$
(for $i=1,2$), the subgraph $G$ induced on $X_1 \cup X_2$ contains at least
$(1-\varepsilon)|X_i|$ vertices in $X_i$ of relative degree at least $\delta$ (for $i=1,2$).
Note that if $G$ is $(\varepsilon,\delta)$-dense, then it is
$(\varepsilon_1, \varepsilon_2, \delta)$-degree-dense for all $\varepsilon_1 \varepsilon_2 \ge \varepsilon$.
Also, if $G$ has relative minimum degree $\alpha$, then it is
$(1-\alpha+\delta, 0, \delta)$-degree-dense for all $\delta < \alpha$.
Hence Theorem \ref {thm:main_bipartite} and the bipartite case of Theorem~\ref{thm:main} 
follows from the following theorem. As mentioned before, the techniques used in this section
is tailored to bipartite graphs. As a result, the bound on the bandwidth that we obtain here
for the bipartite case of Theorem~\ref{thm:main} is in fact better than the bound that we obtain
through the proof of the general case of Theorem~\ref{thm:main} that will be given in a later section.

\begin{thm} \label{thm:bipartite_general}
Let $\varepsilon, \delta, \alpha, \gamma$ be fixed real numbers 
satisfying $0 \le \varepsilon < \frac{\delta}{8}$.
Let $H$ be a $d_0$-degenerate bipartite graph with parts $W_1 \cup W_2$ having 
bandwidth $\beta_0 \le e^{-50\sqrt{\log(1/\delta) d_0 \log n}} \gamma n$.
If $n$ is sufficiently large and 
$G$ is an $n$-vertex $(\alpha, \varepsilon, \delta)$-degree-dense bipartite graph with bipartition 
$V_1 \cup V_2$ satisfying $|V_i| \ge \max\{\gamma n, \frac{1}{1-\alpha}|W_i|\}$ for $i=1,2$, then
$G$ contains a copy of $H$.
\end{thm}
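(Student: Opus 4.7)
The plan is to build the embedding of $H$ into $G$ in windows of $\beta := \beta_0 \log_2(4\beta_0)$ labels at a time, iteratively applying Lemma~\ref{lem:rolling_drc_degenerate} to produce a sequence of target pairs $(B_1^{(k)}, B_2^{(k)})$ along which the partial embedding is grown via Lemma~\ref{lem:extend_partial}. First I would apply Lemma~\ref{lem:bandwidth_degenerate} to re-label $V(H)$ by $[m]$ (with $m := |V(H)|$) so that the labelling is simultaneously $d$-degenerate for $d := 5d_0$ and $\beta$-local. Set $s := \sqrt{d\log n/\log(2/\delta)}$ and $\lambda := (\delta/2)^{5s} n$. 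The hypothesis $\beta_0 \le e^{-50\sqrt{\log(1/\delta)d_0\log n}}\gamma n$ is calibrated so that $\beta \le (\lambda/n)^3 \gamma n = (\delta/2)^{15s}\gamma n$, matching the bandwidth requirement of Lemma~\ref{lem:rolling_drc_degenerate} with slack for logarithmic factors.

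Next I would fix an initial pair $(A_1^{(0)}, A_2^{(0)})$. Using the $(\alpha,\varepsilon,\delta)$-degree-dense hypothesis on $G$, let $A_1^{(0)} = V_1$ and take $A_2^{(0)}$ to be the at least $(1-\varepsilon)|V_2|$ vertices of $V_2$ of relative degree at least $\delta$; for large $n$, this count exceeds $\tfrac14(\delta/2)^{2s}|V_2|$, meeting the high-degree-vertex hypothesis of Lemma~\ref{lem:rolling_drc_degenerate}. For $k = 1, 2, \ldots, \lceil m/\beta \rceil$, inductively assume that the first $(k-1)\beta$ labels have been embedded, with the images of the last block $((k-2)\beta, (k-1)\beta]$ lying in the current pair $(A_1^{(k-1)}, A_2^{(k-1)})$. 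Apply Lemma~\ref{lem:rolling_drc_degenerate} to $(A_1^{(k-1)}, A_2^{(k-1)})$ (after deleting previously-used image vertices from the ambient graph) to produce a new pair $(B_1^{(k)}, B_2^{(k)})$, whose property (iv) certifies that $A_2^{(k-1)}$ is $(d,\beta)$-common into $A_1^{(k-1)} \cap B_1^{(k)}$, and whose property (i) gives $(d,2\beta)$-commonness of $B_1^{(k)}$ into $A_2^{(k-1)}\cap B_2^{(k)}$ (after swapping roles, the analogous statements hold). Now apply Lemma~\ref{lem:extend_partial} with $\beta' = (k-1)\beta$ and the window of the next $2\beta$ labels to extend the embedding by $\beta$ further vertices, all mapped into $B_1^{(k)} \cup B_2^{(k)}$; by the $\beta$-local property, the at most $d$ predecessors of any newly embedded vertex lie in the previous window, so they already belong to $A_1^{(k-1)}\cup A_2^{(k-1)}$, and conditions (i)--(iii) of Lemma~\ref{lem:extend_partial} are supplied by the commonness properties just listed. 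Finally, set $(A_1^{(k)}, A_2^{(k)}) := (B_1^{(k)}, B_2^{(k)})$, delete the vertices now occupied by newly placed images, and iterate.

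The main technical obstacle is checking that the hypotheses of Lemma~\ref{lem:rolling_drc_degenerate} survive the deletion of used image vertices at each step, over as many as $\Theta(m/\beta)$ iterations. This is precisely where the assumption $|V_i| \ge \tfrac{1}{1-\alpha}|W_i|$ pays off: the total number of image vertices ever placed in $V_i$ is at most $|W_i| \le (1-\alpha)|V_i|$, so every residual $A_i^{(k)}$ contains at least an $\alpha$-fraction of $V_i$, and invoking the $(\alpha,\varepsilon,\delta)$-degree-dense property on this residual preserves the relative-degree and high-degree-count conditions throughout. A secondary check is that each application of Lemma~\ref{lem:rolling_drc_degenerate} shrinks the active pair by a factor of $(\delta/2)^{O(s)}$; over $O(m/\beta)$ applications the cumulative shrinkage is still absorbed by the slack in the calibration $e^{-50\sqrt{\log(1/\delta)d_0 \log n}}$ versus the required $(\delta/2)^{15s}$, which was chosen with this in mind. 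Once these invariants are verified, the iteration terminates with a complete embedding of $H$ into $G$.
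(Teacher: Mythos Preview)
Your overall strategy matches the paper's: re-label via Lemma~\ref{lem:bandwidth_degenerate}, then iterate Lemma~\ref{lem:rolling_drc_degenerate} and Lemma~\ref{lem:extend_partial} block by block, using the $(\alpha,\varepsilon,\delta)$-degree-dense hypothesis to keep the residual graph usable. However, your ``secondary check'' reveals a genuine gap that would break the argument. You claim each application of Lemma~\ref{lem:rolling_drc_degenerate} shrinks the active pair by $(\delta/2)^{O(s)}$ and that the cumulative factor over $O(m/\beta)$ rounds is absorbed by the slack in the bandwidth bound. This is false: since $m/\beta$ can be of order $e^{\Theta(\sqrt{d_0\log(1/\delta)\log n})}$, a compounding factor $(\delta/2)^{O(s\cdot m/\beta)}$ is far smaller than $n^{-C}$ for any constant $C$, and nothing in the calibration absorbs it. The whole point of Lemma~\ref{lem:rolling_drc_degenerate}, and the reason its statement is so carefully phrased, is that there is \emph{no} compounding: property~(iii) guarantees that $B_2$ contains at least $\tfrac12(\delta/2)^{2s}|V_2|$ high-degree vertices, a bound measured against $|V_2|$ rather than against $|A_2|$. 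Hence after deleting at most $2\beta$ used vertices the new pair again meets the \emph{same} lower bound (up to a factor~$2$), matching the hypothesis on $A_2$ for the next call. This stable invariant is what actually sustains the iteration; your proposal does not identify it.

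There are also smaller points to fix. Lemma~\ref{lem:extend_partial} extends a partial embedding on $[\beta']$ to $[\beta'+2\beta]$, so the natural block size is $2\beta$, not $\beta$; the paper accordingly takes $I_t=(2(t-1)\beta,2t\beta]$ and calls Lemma~\ref{lem:rolling_drc_degenerate} with parameter $4\beta$. Your initial pair $A_1^{(0)}=V_1$, $A_2^{(0)}=\{$high-degree vertices of $V_2\}$ does not carry the negligible-$(s,d,\beta)$-potential invariant needed for conclusion~(iv) of Lemma~\ref{lem:rolling_drc_degenerate} at the first nontrivial step; the paper handles this with a preliminary bootstrapping call on $(V_1,V_2)$ to produce $A_1^{(0)},A_2^{(0)}$. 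Finally, Lemma~\ref{lem:rolling_drc_degenerate} is asymmetric in $A_1,A_2$, so ``after swapping roles, the analogous statements hold'' is not available; instead one maintains as a separate invariant that $A_1$ is $(d,4\beta)$-common into $A_2$, which together with outputs~(i) and~(iv) of Lemma~\ref{lem:rolling_drc_degenerate} supplies all three hypotheses of Lemma~\ref{lem:extend_partial}.
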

\begin{proof}
Let $H$ be a given $m$-vertex $d_0$-degenerate bipartite graph of bandwidth $\beta_0$
with bipartition $W_1 \cup W_2$.
For $d = 5d_0$ and $\beta = \beta_0 \log_2 (4\beta_0)$,
by Lemma~\ref{lem:bandwidth_degenerate} there exists a 
$d$-degenerate $\beta$-local labelling of the vertex set of $H$.
Define $I_i = (2(i-1)\beta, 2i\beta] \cap [m]$ and $J_i = [2i\beta] \cap [m]$ for $i \ge 0$.
For $i < 0$, define $I_i = J_i = \emptyset$.

Define $s = \sqrt{\frac{d \log n}{\log(2/\delta)}}$ and $\lambda = (\frac{\delta}{2})^{5s} n$.
Note that $\beta \le \frac{1}{4}(\frac{\lambda}{n})^{3} \alpha \gamma n$.
Let $G$ be a given $n$-vertex bipartite graph with bipartition $V_1 \cup V_2$, where
$|W_1| \le (1-\alpha)|V_1|$ and $|W_2| \le (1-\alpha)|V_2|$.
Let $V = V(G)$ and $n = |V|$. Our goal is to find an embedding $f$ of $H$
to $G$ for which $f(W_i) \subseteq V_i$ for $i=1,2$.
We will embed $H$ using an iterative algorithm. 
For $t \ge 1$, at the beginning of the $t$-th step we will be given a pair
$(A_1, A_2)$ with $A_1 \subseteq V_1$ and $A_2 \subseteq V_2$ and a partial embedding
$f : J_{t-1} \rightarrow V(G)$ satisfying the following properties:
\begin{itemize}
  \setlength{\itemsep}{1pt} \setlength{\parskip}{0pt}
  \setlength{\parsep}{0pt}
  \item[(i)] $f(I_{t-1}) \subseteq A_1 \cup A_2 \subseteq V \setminus f(J_{t-2})$,
  \item[(ii)] $A_1$ is $(d,4\beta)$-common into $A_2$,
  \item[(iii)] $A_2$ has $\lambda$-negligible $(s,d,4\beta)$-potential in $A_1$,
  \item[(iv)] $A_2$ contains at least $\frac{1}{4}\left(\frac{\delta}{2}\right)^{2s}|V_2 \setminus f(J_{t-2})|$ vertices of 
  relative-degree at least $\frac{\delta}{2}$ in $V_1 \setminus f(J_{t-2})$.
\end{itemize}
Then at the $t$-th step we will construct sets $C_1 \subseteq V_1$ and
$C_2 \subseteq V_2$, and extend $f$ to $I_t$ so that 
the conditions above for the $(t+1)$-th step is satisfied.
We can then continue the process until we finish embedding $H$.

Initially, apply Lemma \ref{lem:rolling_drc_degenerate} to the sets
$(A_1)_{\ref{lem:rolling_drc_degenerate}} = V_1$,
$(A_2)_{\ref{lem:rolling_drc_degenerate}} = V_2$, 
and $\beta_{\ref{lem:rolling_drc_degenerate}} = 2\beta$ to obtain sets
$A_1^{(0)}$ and $A_2^{(0)}$ satisfying Properties (ii) and (iii).
Note that Properties (i) and (iv) are vacuously true since $I_i = J_i = \emptyset$ for $i < 0$.
Suppose that for some $t \ge 1$, we completed the $(t-1)$-th step of the algorithm.
Let $G_t$ be the subgraph of $G$ induced on $V \setminus f(J_{t-2})$.
Since $G$ is $(\alpha,\varepsilon, \delta)$-degree-dense and 
$|W_i| \le (1-\alpha)|V_i|$
for $i=1,2$, it follows that at least $1-\varepsilon$ proportion
of vertices of each part of the bipartition of $G_t$
has relative degree at least $\delta$.
Note that $|V_i \cap V(G_t)| \ge \alpha|V_i| \ge \alpha \gamma n$ for $i=1,2$. 
By Properties (iii) and (iv) we may apply Lemma~\ref{lem:rolling_drc_degenerate} 
to the pair $(A_{1}, A_{2})$
in the graph $G_t$ with $\beta_{\ref{lem:rolling_drc_degenerate}} = 4\beta$ 
and $\gamma_{\ref{lem:rolling_drc_degenerate}} = \alpha \gamma$ 
to obtain sets $B_1$ and $B_2$ satisfying the following properties:
\begin{itemize}
  \setlength{\itemsep}{1pt} \setlength{\parskip}{0pt}
  \setlength{\parsep}{0pt}
  \item[(a)] $B_1$ is $(d,8\beta)$-common into $A_2 \cap B_2$,
  \item[(b)] $B_2$ has $\lambda$-negligible $(s,d,8\beta)$-potential in $B_1$,
  \item[(c)] $B_2$ contains at least $\frac{1}{2}\left(\frac{\delta}{2}\right)^{2s}|V_2 \setminus f(J_{t-2})|$ vertices of degree at least $\delta|V_1 \setminus f(J_{t-2})|$, and
  \item[(d)] $A_2$ is $(d,4\beta)$-common into $A_1 \cap B_1$,  
\end{itemize}
Note that $f(I_{t-1}) \subseteq V(G_t)$. Let $g = f|_{I_{t-1}}$.
Since $g$ is an embedding of $H[I_{t-1}]$ into $G_t$, it can also be viewed
as a partial embedding of $H[I_{t-1} \cup I_t]$ into $G_t$ defined on $I_{t-1}$.
Thus by Property (ii) and Properties (a), (d),
we can apply Lemma~\ref{lem:extend_partial} and extend $g$
to $I_t$ so that $g(I_t) \subseteq (A_{1} \cap B_1) \cup (A_2 \cap B_2) \subseteq B_1 \cup B_2$.
Furthermore, since the labelling is $\beta$-local, there are no edges of $H$
between $I_t$ and $J_{t-2}$. Thus $g$ in fact extends $f$ to $I_{t}$.

Define $C_1 = B_1 \setminus f(I_{t-1})$ and $C_2 = B_2 \setminus f(I_{t-1})$.
The sets $f(I_{t-1})$ and $f(I_t)$ are disjoint since $f$ is a partial embedding.
Thus $f(I_t) \subseteq C_1 \cup C_2$, and Property (i) for the next step is satisfied.
Define $G_{t+1}$ as the subgraph of $G$ induced on $V \setminus f(J_{t-1})$.
Note that $G_{t+1}$ is obtained from $G_t$ by removing $2\beta$ vertices.
Property (ii) for the next step then follows from Property (a).
By Property (b), $B_2$ has
$\lambda$-negligible $(s,d,8\beta)$-potential in $B_1$ in the graph $G_t$.
An $(s+d)$-tuple in $B_2$ can have less than $4\beta$ common neighbors in $C_1$ 
only if it has less than $6\beta$ common neighbors in $B_1$.
Thus $B_2$ has $\lambda$-negligible $(s,d,6\beta)$-potential in $C_1$, and since
$C_2 \subseteq B_2$, it follows that $C_2$ has $\lambda$-negligible 
$(s,d,6\beta)$-potential in $C_1$, proving Property (iii) for the next step.
Furthermore by Property (c),
there are at least $\frac{1}{2}\left(\frac{\delta}{2}\right)^{2s}|V_2 \setminus f(J_{t-2})|$ vertices 
in $B_2$ of degree at least $\delta |V_1 \setminus f(J_{t-2})|$ in the graph $G_t$. Therefore
$C_2$ contains at least $\frac{1}{2}\left(\frac{\delta}{2}\right)^{2s}|V_2 \setminus f(J_{t-2})|- 2\beta \ge \frac{1}{4}\left(\frac{\delta}{2}\right)^{2s}|V_2 \setminus f(J_{t-1})|$ vertices 
of degree at least 
$\delta |V_1 \setminus f(J_{t-2})| - 2\beta \ge \frac{\delta}{2}|V_1 \setminus f(J_{t-1})|$ in the graph $G_{t+1}$. 
This proves Property (iv) for the next step and concludes the proof.
\end{proof}

Theorem~\ref{thm:main_bipartite} straightforwardly follows.

\begin{proof}[Proof of Theorem~\ref{thm:main_bipartite}]
Let $G$ be an $n$-vertex graph of minimum degree at least $\gamma n$, and let $V = V(G)$.
Let $H$ be a $d$-degenerate bipartite graph with parts $W_1 \cup W_2$ having bandwidth at most
$e^{-100\sqrt{d\log(1/\varepsilon) \log n}} n$ where $|W_1| + |W_2| \le (\gamma - \varepsilon)n$.
Define $w_1 = |W_1| + \frac{\varepsilon}{4}n$ and
$w_2 = |W_2| + \frac{\varepsilon}{4}n$.

Let $V_1 \cup V_2$ be a bipartition of $V$ with $|V_1| = \frac{w_1}{w_1 + w_2} n$
and $|V_2| = \frac{w_2}{w_1 + w_2} n$ chosen uniformly at random.
Standard esimates on concentration of hypergeometric inequality shows that
the bipartite subgraph of $G$ induced on $V_1 \cup V_2$ has relative minimum 
degree at least $\gamma - \frac{\varepsilon}{4}$ with probability $1-o(1)$.
Let $V_1 \cup V_2$ be a particular partition where such bound holds and 
let $G'$ be the bipartite subgraph induced on $V_1 \cup V_2$.
Since $G'$ has relative minimum degree at least $\gamma - \frac{\varepsilon}{4}$, 
it is $(1 - \gamma + \frac{\varepsilon}{2}, 0, \frac{\varepsilon}{4})$-degree-dense.
Note that $\min\{|V_1|, |V_2|\} \ge \frac{\varepsilon}{4}n$. Furthermore,
\[
	|V_1| = \frac{w_1}{w_1 + w_2} n 
	\ge \frac{|W_1| + (\varepsilon/4)n}{\gamma - (\varepsilon/2)}
	\ge \frac{|W_1|}{\gamma - (\varepsilon/2)},
\]
and similar bound holds for $V_2$.
Therefore we can apply Theorem~\ref{thm:bipartite_general} with $(\varepsilon)_{\ref{thm:bipartite_general}} = 0$, 
$(\delta)_{\ref{thm:bipartite_general} } = \frac{\varepsilon}{4}$,
$(\alpha)_{\ref{thm:bipartite_general} } = 1 - \gamma + \frac{\varepsilon}{2}$, and
$(\gamma)_{\ref{thm:bipartite_general}} = \frac{\varepsilon}{4}$ to find a copy of $H$
in $G$.
\end{proof}

\subsection{Proof of Lemma \ref{lem:rolling_drc_degenerate}}

In this subsection we prove Lemma \ref{lem:rolling_drc_degenerate} using dependent random choice.
We will take $B_2 = N(T_1)$ for some appropriately chosen random set $T_1 \subseteq V_1$.
The main challenge is that the events that we would like to control are not concentrated (more precisely,
we do not know how to prove that they are concentrated), and hence we need to encode
all the events into a single random variable and use the first moment method.
For example if we want to show that $|B_2| \ge b$, then we can show
\[
	\BBE[|B_2| - b] \ge 0.
\] 
Then there exists a choice of $T_1$
for which $|B_2| \ge b$. Our situation is more complicated. We want to find $B_2$ so that
$|B_2| \ge b$ and $|B_2 \cap A_2| \ge a$ simultaneously holds (among other properties). 
We can try to prove that $\BBE[|B_2||B_2 \cap A_2| - a b] \ge 0$, but then we have no bound on individual sets.
This is critical for us since we want a lower bound on $B_2$ that is independent of the size of $A_2$ (otherwise the set $B_2$ will rapidly shrink between iterations and our embedding strategy will fail). 
What we can do instead is prove that
\[
	\BBE[|B_2||B_2 \cap A_2| - a |B_2| - b|A_2 \cap B_2|] \ge 0.
\]
As long as $a$ and $b$ are positive real numbers, this will imply the desired bounds.
In fact we do not need an individualized bound on $|A_2 \cap B_2|$ and hence our random variable
will be of the form
\[
	\BBE[|B_2||B_2 \cap A_2| - a |B_2| - ab] \ge 0,
\]
which is easier to verify. We also need bounds on certain potentials. 
For two sets $X$ and $Y$, denote the $(p,d,\beta)$-potential of $X$ in $Y$ as $\xi_{p,d,\beta}(X,Y)$.
If we want to prove that the $(p,d,\beta)$-potential of $B_2$ in $V_1$
is $\lambda$-negligible, i.e., $\xi_{p,d,\beta}(B_2, V_1) < \lambda^{p-1}$, then
we will modify the equation into
\[
	\BBE\left[|B_2||B_2 \cap A_2| - a |B_2| - ab - \frac{n^2\xi_{p,d,\beta}(B_2, V_1)}{\lambda^{p-1}}\right] > 0,
\]
as it will force $\xi_{p,d,\beta}(B_2, V_1) < \frac{|B_2||B_2 \cap A_2|}{n^2} \lambda^{p-1} \le \lambda^{p-1}$.
In some cases we can only bound the potential in terms of another potential
and need control on the ratio between two potentials.
The following proposition captures the strength and beauty of dependent random choice.

\begin{prop} \label{prop:negligible_potential}
Let $s, d, p$ be integers.
Let $X$ and $Y$ be a given pair of subsets of vertices and let
$X' \subseteq X$ be a subset of size at least $m$.
Let $\hat{T}$ be a $s$-tuple of vertices in $X'$ chosen
independently and uniformly at random. Then the following hold.
\begin{itemize}
  \setlength{\itemsep}{1pt} \setlength{\parskip}{0pt}
  \setlength{\parsep}{0pt}
\item[(i)] 
\[
	\BBE\left[\frac{\xi_{p, d, \beta}(N(\hat{T}) \cap Y, X)}{ \xi_{p, d, \beta}(Y, X)} \right] 
	\le \left( \frac{\beta}{m} \right)^{s}.
\]
\item[(ii)] 
\[
	\BBE\left[\frac{\xi_{p, d, \beta}(X, N(\hat{T}) \cap Y)}{\xi_{p+s, d, \beta}(X, Y)} \right] \le \left(\frac{1}{m} \right)^{s}.
\]
\end{itemize}
\end{prop}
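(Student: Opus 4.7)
The plan is to prove both parts by the first moment method, interpreting each potential as a sum of indicator variables over ``bad'' tuples and pulling the deterministic denominators out of the expectation.

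For part (i), I would write
\[
\xi_{p,d,\beta}(N(\hat{T})\cap Y, X) = \sum_{Q} \mathbf{1}[Q \subseteq N(\hat{T})],
\]
where $Q$ ranges over the $(p+d)$-tuples in $Y^{p+d}$ with $|N(Q)\cap X|<\beta$, so that there are exactly $\xi_{p,d,\beta}(Y,X)$ terms in the sum. A tuple $Q$ contributes precisely when every coordinate of $\hat{T}$ lies in $N(Q)\cap X'$; since $\hat{T}$ consists of $s$ independent uniform samples from $X'\subseteq X$ with $|X'|\ge m$, this probability is at most $(|N(Q)\cap X|/m)^{s}<(\beta/m)^{s}$. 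Summing and dividing by the deterministic factor $\xi_{p,d,\beta}(Y,X)$ yields the claim.

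For part (ii), I would use $N(Q)\cap N(\hat{T})=N(Q\cup\hat{T})$ to write
\[
\xi_{p,d,\beta}(X, N(\hat{T})\cap Y) = \sum_{Q\in X^{p+d}} \mathbf{1}\bigl[|N(Q\cup \hat{T})\cap Y| < \beta\bigr].
\]
Computing the expectation as $|X'|^{-s}\sum_{\hat{T}\in (X')^{s}}$ of this quantity, each pair $(Q,\hat{T})\in X^{p+d}\times (X')^{s}$ concatenates to a single ordered tuple $R\in X^{p+s+d}$. Since $X^{p+d}\times (X')^{s}\subseteq X^{p+s+d}$, the resulting double sum of indicators $\mathbf{1}[|N(R)\cap Y|<\beta]$ is bounded above by $\xi_{p+s,d,\beta}(X,Y)$; dividing through by $|X'|^{s}\ge m^{s}$ gives the desired bound.

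Neither part is a serious obstacle: the entire argument is a straightforward first-moment calculation. The only care required is correct accounting of ordered tuples in part (ii) and the observation that the denominators are deterministic, so that degenerate cases in which a denominator vanishes automatically force the corresponding numerator to vanish as well.
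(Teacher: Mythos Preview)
Your proposal is correct and follows essentially the same approach as the paper's proof: both parts are straightforward first-moment calculations, with part (i) bounding the survival probability of each bad tuple $Q$ by $(\beta/m)^s$ and part (ii) using the identity $N(Q)\cap N(\hat T)=N(Q\cup\hat T)$ to reinterpret bad $(p+d)$-tuples for $N(\hat T)\cap Y$ as bad $(p+s+d)$-tuples for $Y$. Your treatment of the ordered-tuple bookkeeping in part (ii) and of the degenerate denominator-zero cases is in fact slightly more careful than the paper's own exposition.
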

\begin{proof}
(i) Let $Q$ be a fixed $(p+d)$-tuple in $Y$ with less than $\beta$ common neighbors in $X$.
The probability that $Q \subseteq N(\hat{T})$ is 
\[
	\left(\frac{|N(Q) \cap X'|}{|X'|}\right)^{s}
	\le \left(\frac{\beta}{m}\right)^{s}.
\]
Therefore
\[
	\BBE\left[\xi_{p, d, \beta}(N(\hat{T}) \cap Y, X)\right]
	\le \left(\frac{\beta}{m}\right)^{s} \xi_{p, d, \beta}(Y, X),
\]
and Part (i) follows.

\medskip

\noindent
(ii) Note that a $(p+d)$-tuple $Q$ in $X$ has less than $\beta$ common neighbors in $N(\hat{T}) \cap Y$ 
if and only if the $(p+d+s)$-tuple $Q \cup \hat{T}$ has less than $\beta$ common neighbors in $Y$.
Therefore,
\[
	\BBE[\xi_{p, d, \beta}(X, N(\hat{T}) \cap Y)]
	= q \xi_{p+s, d, \beta}(X;Y),
\]
where $q$ is the probability that $\hat{T}$ is precisely the last $s$ elements of $Q'$ for a fixed $(p+d+s)$-tuple $Q'$ in $X$.
Part (ii) follows since $q = \frac{1}{m^s}$.
\end{proof}

We now prove Lemma \ref{lem:rolling_drc_degenerate}.
Let $G$ be a bipartite graph with bipartition $V_1 \cup V_2$ where at least $1-\varepsilon$
proportion of vertices of each side have relative-degree at least $\delta$. 
Suppose that sets $A_1 \subseteq V_1$ and $A_2' \subseteq A_2 \subseteq V_2$ are given where
$|A_2'| \ge \frac{1}{4}\left(\frac{\delta}{2}\right)^{2s}|V_2|$ and all vertices in $A_2'$ have degree
at least $\frac{\delta}{2}|V_1|$ in $V_1$. 
Let $V_1' \subseteq V_1$ and $V_2' \subseteq V_2$ be the set of vertices of 
relative-degree at least $\delta$. The given condition implies that
\[
	|V_1'| \ge (1-\varepsilon)|V_1|
	\quad \text{and} \quad
	|V_2'| \ge (1-\varepsilon)|V_2|.
\]
Throughout the proof we will be considering $(p,d,\beta)$-potentials for fixed $d$
and two different values of $\beta$. 
Hence to avoid having three subscripts such as in $\xi_{p,d,\beta}$, 
we abuse notation and for sets $X$ and $Y$ and an integer $p$, 
we denote the the $(p,d,\beta)$-potential of $X$ in $Y$ as $\xi_{p}(X,Y)$
and the $(p,d,2\beta)$-potential of $X$ in $Y$ as $\eta_{p}(X, Y)$.

Let $\hat{T}_1$ be a random multi-set of $s$ vertices in $V_1'$, where each vertex is chosen 
uniformly and independently at random. Set $\hat{B}_2 = N(\hat{T}_1)$ and
$\hat{B}'_2 = N(\hat{T}_1) \cap V_2'$.
Define
\[
	\mu := \BBE\left[ |\hat{B}_2 \cap A_2'| \cdot |\hat{B}_2'| - \left(\frac{\delta}{2}\right)^{2s} |V_2'| \cdot |\hat{B}_2 \cap A_2'| - \frac{n^{2} }{\lambda^{2s-1}}\eta_{2s}(\hat{B}_2, V_1) \right],
\]
and let $T_1$ be a particular choice of $\hat{T}_1$ for which the random variable on the right-hand side 
becomes at least its expected value. 
Let $B_2 = \hat{B}_2$ and $B_2' = \hat{B}_2'$ for this choice of $\hat{T}_1$.

Let $\hat{T}_2$ be a random multi-set of $s$ vertices in $A_2' \cap B_2$, where each vertex is chosen 
uniformly and independently at random. Set $\hat{B}_1 = N(\hat{T}_2)$ and define
\[
	\nu := \BBE\left[ \eta_{0}(\hat{B}_1, A_2 \cap B_2) + \frac{\lambda^{s}\eta_{s}(B_2, \hat{B}_1)}{\eta_{2s}(B_2, V_1)} + \frac{\lambda^{s}\xi_{0}(A_2, \hat{B}_1 \cap A_1)}{\xi_{s}(A_2, A_1)} \right],
\]
and let $T_2$ be a particular choice of $\hat{T}_2$ for which the random variable on the right-hand side 
becomes at most its expected value.  Let $B_1 = \hat{B}_1$ for this choice of $\hat{T}_2$
(see Figure \ref{fig:fig_bipartite}).

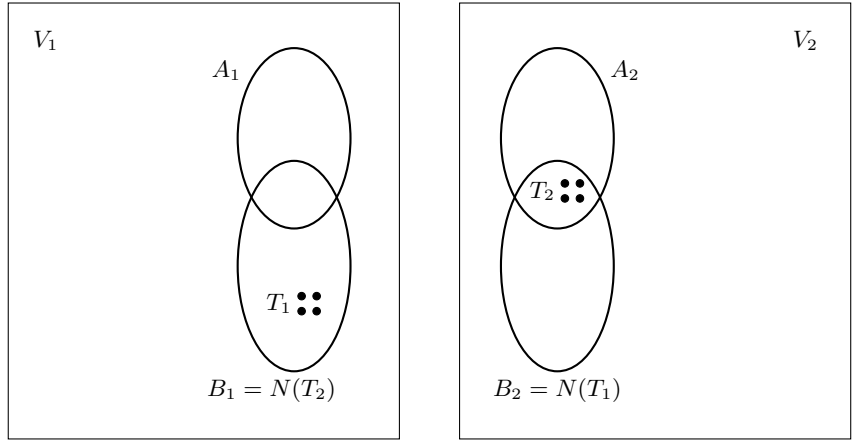
\begin{figure}[h]
  \centering
  \begin{tabular}{ccc}
    %%
%% General case
%%

\begin{tikzpicture}
%  \clip (-1.5,-2.5) rectangle (12.5,3.3);

  \draw (-2.8,-2.5) rectangle (2.4,3.3);
  \draw (-2.3,2.8) node {\footnotesize{$V_1$}};

  \draw [thick] (1,1.5) ellipse (0.75cm and 1.2cm);
  \draw (0.1,2.4) node {\footnotesize{$A_1$}};

  \draw [thick] (1,-0.2) ellipse (0.75cm and 1.4cm);
  \draw (0.7,-1.85) node {\footnotesize{$B_1= N(T_2)$}};

  \draw [fill=black] (1.1,-0.6) circle (0.5mm);
  \draw [fill=black] (1.1,-0.8) circle (0.5mm);
  \draw [fill=black] (1.3,-0.6) circle (0.5mm);
  \draw [fill=black] (1.3,-0.8) circle (0.5mm);
  \draw (0.8,-0.7) node {\footnotesize{$T_1$}};

  \draw (3.2,-2.5) rectangle (8.4,3.3);
  \draw (7.8,2.8) node {\footnotesize{$V_2$}};

  \draw [thick] (4.5,1.5) ellipse (0.75cm and 1.2cm);
  \draw (5.4,2.4) node {\footnotesize{$A_2$}};

  \draw [thick] (4.5,-0.2) ellipse (0.75cm and 1.4cm);
  \draw (4.5,-1.85) node {\footnotesize{$B_2 = N(T_1)$}};

  \draw [fill=black] (4.6,0.9) circle (0.5mm);
  \draw [fill=black] (4.6,0.7) circle (0.5mm);
  \draw [fill=black] (4.8,0.9) circle (0.5mm);
  \draw [fill=black] (4.8,0.7) circle (0.5mm);
  \draw (4.3,0.8) node {\footnotesize{$T_2$}};

\end{tikzpicture}
  \end{tabular}
  \caption{The sets $V_i, A_i, B_i, T_i$ for $i=1,2$. We first choose $T_1$, and then $T_2$.}
 \label{fig:fig_bipartite}
\end{figure}

\begin{claim} \label{clm:claim0}
$\mu \ge \frac{1}{8} \left( \frac{\delta}{2}\right)^{4s} |V_2|^2$ and $\nu < 1$.
\end{claim}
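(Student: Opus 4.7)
The plan is to prove the $\mu$-bound first and then exploit it to control the three summands in $\nu$. The main observation is that the $\mu$-bound forces the chosen $T_1$ to satisfy $|A_2' \cap B_2| \ge \tfrac{\gamma}{8}(\delta/2)^{4s}n$, and this lower bound is the key quantitative input for all three summands of $\nu$.

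For $\mu$, the main calculation is the lower bound on $\mathbb{E}[|\hat{B}_2 \cap A_2'| \cdot |\hat{B}_2'|]$. By double counting, this equals $\sum_{u \in A_2',\, v \in V_2'}(|N(u) \cap N(v) \cap V_1'|/|V_1'|)^s$, and by convexity (power-mean inequality) this is at least $|A_2'||V_2'|\bar{c}^{\,s}$ where $\bar{c}$ is the average of the bracketed quantity. Switching the order of summation, $\bar{c} = \sum_{w \in V_1'}\deg(w; A_2')\deg(w; V_2')/(|A_2'||V_2'||V_1'|)$, and I would lower bound this using (a) $\deg(w; V_2') \ge (\delta - \varepsilon)|V_2|$ for every $w \in V_1'$ (since $w$ has relative degree at least $\delta$ in $V_2$ and $|V_2 \setminus V_2'| \le \varepsilon|V_2|$), and (b) $\sum_{w \in V_1'}\deg(w; A_2') = e(V_1', A_2') \ge (\delta/2 - \varepsilon)|V_1||A_2'|$ (since each $u \in A_2'$ has degree at least $(\delta/2)|V_1|$ in $V_1$). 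Together these yield $\bar{c} \ge (\delta - \varepsilon)(\delta/2 - \varepsilon)$, and the hypothesis $\varepsilon < \delta/8$ makes this exceed $(\delta/2)^2$ by a factor of at least $21/16$, enough that $\bar{c}^{\,s} \ge 2(\delta/2)^{2s}$ once $n$ is large enough that $s \ge 3$. Since $\mathbb{E}[|\hat{B}_2 \cap A_2'|] \le |A_2'|$, the cancellation term $(\delta/2)^{2s}|V_2'|\mathbb{E}[|\hat{B}_2 \cap A_2'|]$ is at most $(\delta/2)^{2s}|V_2'||A_2'|$, leaving
\[
\mathbb{E}\bigl[|\hat{B}_2 \cap A_2'|\bigl(|\hat{B}_2'| - (\delta/2)^{2s}|V_2'|\bigr)\bigr] \ge (\delta/2)^{2s}|V_2'||A_2'| \ge \tfrac{1-\varepsilon}{4}(\delta/2)^{4s}|V_2|^2.
\]
For the potential correction, each bad $(2s+d)$-tuple $Q \subseteq V_2$ lies in $\hat{B}_2$ with probability at most $(2\beta/|V_1'|)^s$, so the total contribution is at most $n^{2s+d}(2\beta/|V_1'|)^s$, which is rendered negligible by the hypotheses $\beta \le (\lambda/n)^3 \gamma n$ and $\lambda \le (\delta/2)^{5s}n$. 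Combining, $\mu \ge \tfrac{1}{8}(\delta/2)^{4s}|V_2|^2$.

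For $\nu$, the $\mu$-bound applied to the chosen $T_1$ together with the trivial bound $|B_2'| \le |V_2'|$ forces $|A_2' \cap B_2| \ge \tfrac{1}{8}(\delta/2)^{4s}|V_2| \ge \tfrac{\gamma}{8}(\delta/2)^{4s}n$. The first summand is bounded directly: each bad $d$-tuple $Q \subseteq V_1$ (with fewer than $2\beta$ common neighbors in $A_2 \cap B_2$) lies in $\hat{B}_1 = N(\hat{T}_2)$ with probability at most $(2\beta/|A_2' \cap B_2|)^s$, so $\mathbb{E}[\eta_0(\hat{B}_1, A_2 \cap B_2)] \le n^d(2\beta/|A_2' \cap B_2|)^s$, which is tiny given the bounds on $\beta$ and $|A_2' \cap B_2|$. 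The second and third summands are of the exact form handled by Proposition \ref{prop:negligible_potential}(ii): for the second, take $X = B_2$, $Y = V_1$, $p = s$, potential parameter $2\beta$, and $\hat{T} = \hat{T}_2 \subseteq X' = A_2' \cap B_2$, observing that $\hat{B}_1 = N(\hat{T}_2) \cap V_1$, so $\mathbb{E}[\eta_s(B_2, \hat{B}_1)/\eta_{2s}(B_2, V_1)] \le 1/|A_2' \cap B_2|^s$; for the third, take $X = A_2$, $Y = A_1$, $p = 0$, parameter $\beta$, and the same $\hat{T}_2$, yielding $\mathbb{E}[\xi_0(A_2, \hat{B}_1 \cap A_1)/\xi_s(A_2, A_1)] \le 1/|A_2' \cap B_2|^s$. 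Multiplying by $\lambda^s$ and using $\lambda \le (\delta/2)^{5s}n$ together with the lower bound on $|A_2' \cap B_2|$ shows each of these is at most $((8/\gamma)(\delta/2)^s)^s = (8/\gamma)^s n^{-d}$ (using $(\delta/2)^{s^2} = n^{-d}$). Summing the three bounds, $\nu < 1$ for large $n$.

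The delicate step is the lower bound on $\bar{c}$: the subtracted term $(\delta/2)^{2s}|V_2'|\mathbb{E}[|\hat{B}_2 \cap A_2'|]$ in $\mu$ matches the trivial bound on $\mathbb{E}[|\hat{B}_2 \cap A_2'|\cdot|\hat{B}_2'|]$, so the whole argument hinges on a strict improvement $\bar{c} > (\delta/2)^2$ and its amplification to $\bar{c}^{\,s} > 2(\delta/2)^{2s}$ via the $s$-th power. This is exactly what the hypothesis $\varepsilon < \delta/8$ provides. Everything else is routine dependent random choice bookkeeping along the hierarchy $\beta \le (\lambda/n)^3\gamma n$, $\lambda \le (\delta/2)^{5s}n$, and the choice $s = \sqrt{d \log n/\log(2/\delta)}$ built into the hypotheses.
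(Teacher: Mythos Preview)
Your proof is correct and follows essentially the same approach as the paper. The only organizational difference is in the convexity step for $\BBE[|\hat{B}_2 \cap A_2'|\cdot|\hat{B}_2'|]$: you apply Jensen's inequality globally over the pair $(u,v)\in A_2'\times V_2'$ to get $|A_2'||V_2'|\bar{c}^{\,s}$, whereas the paper applies convexity only over $v$ for each fixed $u$, obtaining the bound $(\delta-\varepsilon)^s|V_2'|\cdot\BBE[|\hat{B}_2\cap A_2'|]$ and then subtracting the term $(\delta/2)^{2s}|V_2'|\cdot\BBE[|\hat{B}_2\cap A_2'|]$ directly before lower-bounding $\BBE[|\hat{B}_2\cap A_2'|]$ by a separate convexity application. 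The paper's route is marginally sharper (Jensen over $v$ alone dominates global Jensen), but both arrive at the same numerical conclusion $\mu\ge\tfrac18(\delta/2)^{4s}|V_2|^2$, and your handling of the potential term and all three summands of $\nu$ matches the paper's use of Proposition~\ref{prop:negligible_potential} essentially line for line.
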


We first deduce the lemma provided that the claim holds.
Property (iii) follows since
\[
	|B_2 \cap A_2'| |B_2'| - \left(\frac{\delta}{2}\right)^{2s} |V_2'| |B_2 \cap A_2'|
	\ge \mu > 0,
\]
and  $|V_2'| \ge (1-\varepsilon)|V_2|$
implies $|B_2'| \ge (\frac{\delta}{2})^{2s}|V_2'| \ge (1-\varepsilon)(\frac{\delta}{2})^{2s}|V_2|$.
Furthermore since
\begin{align} \label{eq:potential_previous_step}
	\frac{n^{2}}{\lambda^{2s-1}} \eta_{2s}(B_2, V_1)
	< |B_2 \cap A_2| \cdot |B_2'| \le n^2
	\quad \Longrightarrow \quad
	\eta_{2s}(B_2,V_1) < \lambda^{2s-1}.
\end{align}

Since $\nu < 1$, we see that
\[
	 \eta_{0}(B_1, A_2 \cap B_2) + \frac{\lambda^{s}\eta_{s}(B_2, B_1)}{\eta_{2s}(B_2, V_1)} + \frac{\lambda^{s}\xi_{0}(A_2, B_1 \cap A_1)}{\xi_{s}(A_2, A_1)} < 1.
\]
The first term on the left-hand side gives $\eta_{0}(B_1, A_2 \cap B_2) < 1$.
Thus $B_1$ is $(d,2\beta)$-common into $A_2 \cap B_2$ and Property (i) follows.
The second term gives $\eta_{s}(B_2, B_1) < \lambda^{-s}\eta_{2s}(B_2, V_1) < \lambda^{s-1}$ (from
\eqref{eq:potential_previous_step}), establishing
that $B_2$ has $\lambda$-negligible $(s,d,2\beta)$-potential into $B_1$, which is Property (ii).
The third term gives $\xi_{0}(A_2, B_1 \cap A_1) < \lambda^{-s}\xi_{s}(A_2, A_1)$. Hence
if $A_2$ has $\lambda$-negligible $(s,d,\beta)$-potential into $A_1$, then
$\xi_{s}(A_2, A_1) < \lambda^{s-1}$ and thus $\xi_0(A_2, B_1 \cap A_1) = 0$.
Therefore Property (iv) follows.
It thus suffices to prove Claim~\ref{clm:claim0}.

\begin{proof}[Proof of Claim~\ref{clm:claim0}]
Recall that $\mu$ consists of three terms.
For the second term, by linearity of expectation
\begin{align} \label{eq:bip_bound_1}
 	\BBE\left[|\hat{B}_2 \cap A_2'|\right]
 	= \sum_{x \in A_2'} \BFP\left(x \in N(\hat{T}_1)\right)
 	= \sum_{x \in A_2'} \left(\frac{\deg(x; V_1')}{|V_1'|} \right)^{s}.
\end{align}
Therefore by convexity,
\begin{align} \label{eq:bip_bound_2}
 	\BBE\left[|\hat{B}_2 \cap A_2'|\right]
 	\ge |A_2'| \left(\frac{1}{|A_2'|} \sum_{x \in A_2'} \frac{\deg(x; V_1')}{|V_1'|} \right)^{s}
 	\ge \left(\frac{\delta}{2} - \varepsilon\right)^{s} |A_2'|,
\end{align}
where the second inequality follows since all vertices in $A_2'$ have degree at least
$\frac{\delta}{2}|V_1|$ and $|V_1'| \ge (1-\varepsilon)|V_1|$.

For the first term of $\mu$, by linearity of expectation,
\begin{align} \label{eq:intersection_size}
	\BBE\left[ \big| \hat{B}_2 \cap A_2' \big| \cdot \big| \hat{B}_2' \big|  \right]
	= &\, \sum_{x \in A_2'}\sum_{x' \in V_2'} \BFP\left(x, x' \in N(\hat{T}_1)\right) \nonumber \\
	=&\, \sum_{x \in A_2'}\sum_{x' \in V_2'} \left(\frac{\codeg(x,x'; V_1')}{|V_1'|}\right)^{s} \nonumber \\
	\ge&\, \sum_{x \in A_2'}|V_2'| \left(\frac{1}{|V_2'|}\sum_{x' \in V_2'} \frac{\codeg(x,x'; V_1')}{|V_1'|}\right)^{s},
\end{align}
where the final inequality follows from convexity.
For fixed $x \in A_2'$, the sum $\sum_{x' \in V_2'} \codeg(x,x'; V_1')$ counts the number of paths of length 2 starting at $x$ whose second vertex is in $V_1'$ and third in $V_2'$. Therefore by the minimum degree condition we have
\[
	\sum_{x' \in V_2'} \codeg(x,x'; V_1')
	\ge \deg(x; V_1') \cdot (\delta|V_2| - \varepsilon|V_2|).
\]
Hence from \eqref{eq:intersection_size} and \eqref{eq:bip_bound_1},
\begin{align} \label{eq:mu_1}
	\BBE\left[ \big| \hat{B}_2 \cap A_2 \big| \cdot \big| \hat{B}_2' \big|  \right]
	\ge \sum_{x \in A_2'}|V_2'| (\delta - \varepsilon)^{s} \left( \frac{\deg(x;V_1')}{|V_1'|}\right)^{s}
	= (\delta - \varepsilon)^s |V_2'| \cdot \BBE\left[ |\hat{B}_2 \cap A_2'|\right].
\end{align}
By Proposition \ref{prop:negligible_potential} (i), 
\[
	\BBE\left[\frac{n^{2} }{\lambda^{2s-1}}\eta_{2s}(\hat{B}_2, V_1)\right]
	\le \frac{n^{2} }{\lambda^{2s-1}} \eta_{2s}(V_2, V_1) \cdot \left(\frac{\beta}{|V_1'|}\right)^{s}.
\]
Observe that $\eta_{2s}(V_2, V_1) < n^{2s+d}$ trivially holds since $\eta_{2s}(V_2, V_1)$ 
counts the number of $(2s+d)$-tuples in $V_2$ with less than $2\beta$ common neighbors in $V_1$.
Recall that $s=\sqrt{\frac{d \log n}{\log(2/\delta)}}$, $\lambda \le \left(\frac{\delta}{2}\right)^{5s}n$, 
$\beta \le \left(\frac{\lambda}{n}\right)^{3}\gamma n$, and $|V_1'| \ge (1-\varepsilon)\gamma n$. Hence
\begin{align*}
	\BBE\left[\frac{n^{2} }{\lambda^{2s-1}}\eta_{2s}(\hat{B}_2, V_1)\right]
	&\le \frac{n^{2s+d+2} }{\lambda^{2s-1}} \left(\frac{\beta}{|V_1'|}\right)^{s}
	\le \frac{\lambda n^{d+2}}{(1-\varepsilon)^s \gamma^s} \cdot \left(\frac{n^2}{\lambda^2} \cdot \frac{\beta}{n}\right)^{s}
\\	&\le \frac{\lambda n^{d+2}}{(1-\varepsilon)^s} \cdot \left( \frac{\lambda}{n} \right)^{s}
	= \frac{\lambda n^{d+2}}{(1-\varepsilon)^s} \cdot \left( \frac{\delta}{2} \right)^{5s^2}
	= \frac{\lambda n^{-4d+2}}{(1-\varepsilon)^s}
	< 1.
\end{align*}
Therefore from \eqref{eq:mu_1}, \eqref{eq:bip_bound_2} and $\varepsilon < \frac{\delta}{8}$, we obtain
\begin{align*}
	\mu 
	=&\,
	\BBE\left[ \big| \hat{B}_2 \cap A_2 \big| \cdot \big| \hat{B}_2' \big|
	- \left(\frac{\delta}{2}\right)^{s} |V_2'| \cdot |\hat{B}_2 \cap A_2'|
	- \frac{n^{2} }{\lambda^{2s-1}}\eta_{2s}(\hat{B}_2, V_1)\right] \\
	\ge&\, \frac{1}{2}(\delta-\varepsilon)^{s} |V_2'| \cdot \BBE\left[|\hat{B}_2 \cap A_2'|\right] \\
	\ge&\, \left( \frac{\delta}{2}\right)^{2s} |V_2'||A_2'|
	\ge \frac{1}{8} \left( \frac{\delta}{2}\right)^{4s} |V_2|^2.
\end{align*}

We now compute the expected value of $\nu$. The choice of $B_2$ in particular implies that
\[
	\big| B_2 \cap A_2' \big| \cdot \big| B_2' \big|
	\ge \frac{1}{8} \left( \frac{\delta}{2}\right)^{4s} |V_2|^2,
\]
and since $|B_2'| \le |V_2|$, it follows that
$|B_2 \cap A_2'| \ge \frac{1}{8} \left( \frac{\delta}{2}\right)^{4s} |V_2|$.
Recall that
\[
	\nu = \BBE\left[ \eta_{0}(\hat{B}_1, A_2 \cap B_2)  + \frac{\lambda^{s}\eta_{s}(B_2, \hat{B}_1)}{\eta_{2s}(B_2, V_1)} + \frac{\lambda^{s}\xi_{0}(A_2, \hat{B}_1 \cap A_1)}{\xi_{s}(A_2, A_1)} \right].
\]
By Proposition \ref{prop:negligible_potential} (i),
since $\beta \le \left(\frac{\lambda}{n}\right)^{3}\gamma n \le \left(\frac{\delta}{2}\right)^{15s}\gamma n$,
\begin{align*}
	\BBE\left[\eta_{0}(\hat{B}_1, A_2 \cap B_2)\right]
	\le&\, \eta_{0}(V_1, A_2 \cap B_2) \cdot \left( \frac{2\beta}{|B_2 \cap A_2'|} \right)^{s} \\
	<&\, n^d \left( \frac{2\beta}{|B_2 \cap A_2'|} \right)^{s} 
	\le n^d \left( \frac{\delta}{2} \right)^{10s^2} = n^{-9d}
	< \frac{1}{2}.
\end{align*}
By Proposition \ref{prop:negligible_potential} (ii),
\[
	\lambda^{s} \cdot \BBE\left[\frac{\xi_{0}(A_2, \hat{B}_1 \cap A_1)}{\xi_{s}(A_2, A_1)} + \frac{\eta_{s}(B_2, \hat{B}_1)}{\eta_{2s}(B_2, V_1)} \right]
	\le 2 \left( \frac{\lambda}{|B_2 \cap A_2'|}\right)^{s} < \frac{1}{2},
\]
and thus $\nu < 1$.
\end{proof}

The notations $V_1', V_2', A_2', B_2'$ used in this subsection are somewhat cumbersome but plays
an important role in the proof. Recall that
\[
	\mu = \BBE\left[ |\hat{B}_2 \cap A_2'| \cdot |\hat{B}_2'| - \left(\frac{\delta}{2}\right)^{s} |V_2'| \cdot |\hat{B}_2 \cap A_2'| - \frac{n^{2} }{\lambda^{2s}}\eta_{2s}(\hat{B}_2, V_1) \right].
\]
The second term $\left(\frac{\delta}{2}\right)^{s} |V_2'| \cdot |\hat{B}_2 \cap A_2'|$
is extremely important since it allows us to give a lower bound on the size of $B_2'$ 
that is independent of the size of $A_2'$. 
The reason we were able to conclude that $\mu$ is positive even with this term is 
\eqref{eq:mu_1} which were deduced from the fact that we are working with vertices
in $V_2'$, not $V_2$.
The same phenomenon shows up in greater depth for non-bipartite graphs.
Thus it is useful to identify these sets of vertices having large degree.

It may seem like one can avoid using these notations.
One way is by
defining $B_2$ as some subset of $N(T_1)$ (instead of $B_2 = N(T_1)$). However such approach
fails since if $B_2 \subsetneq N(T_1)$ then $T_1 \cup Q$ having many common neighbors 
no longer implies that $Q$
has many common neighbors in $B_2$ (we crucially relied on such property throughout the proof).
Another way is by imposing the condition that
all vertices in $A_2$ have relative degree at least $\delta$ in $A$, and prove
that all vertices in $B_2$ have relative degree at least $\delta$ in $A$ as well.
The problem with this approach is that we embed $\beta$ vertices of $H$ 
after constructing the sets $A_1$ and $A_2$. Once we embed these vertices, the set $A_2$
has relative degree at least $\delta'$ in the remaining set for some $\delta' < \delta$.
Since $B_2$ necessarily intersects $A_2$, this implies that $B_2$ 
can only be guaranteed to have relative degree at least $\delta'$ (instead of $\delta$).
Moreover, after embedding another $\beta$ vertices, it will drop to relative
degree at least $\delta''$ for some $\delta'' < \delta$. 
Therefore the embedding algorithm becomes unsustainable under this approach.

%%
%%
%%
%%
%%	General Graphs
%%
%%
%%
%%
\section{General graphs} 
\label{sec:general}

Throughout this section, fix an $r$-partite graph $H$.
We will consider the problem of embedding $H$ into another $r$-partite graph $G$.
We assume that $r$-partitions $(W_i)_{i \in [r]}$ of $H$ 
and $(V_i)_{i \in [r]}$ of $G$ 
are given. We restrict our attention
to finding an embedding $f : V(H) \rightarrow V(G)$ for which $f(W_i) \subseteq V_i$ for all $i \in [r]$
even when we do not explicitly refer to the $r$-partition of $H$.

The embedding lemma that we use for $r$-partite graphs
is in fact simpler than the one used in the previous section (Lemma \ref{lem:extend_partial}). Such simplicity is achieved at the cost of having further restriction on the given $r$-tuple
$(A_i)_{i \in [r]}$.
Throughout this section, for an $r$-tuple of sets $\mathcal{A} = (A_i)_{i \in [r]}$, 
we use the notation $A_{-i}$ to denote the set $A_{-i} := \bigcup_{j \in [r], j \neq i} A_j$.
We say that $\mathcal{A}$ is {\it $(d,\beta)$-common}
if $A_{-i}$ is $(d,\beta)$-common into $A_i$ for every $i \in [r]$.
We say that an $r$-tuple $\mathcal{T} = (T_i)_{i \in [r]}$ is
{\it $(d,\beta)$-typical for $\mathcal{A}$} if for every $i \in [r]$ and
$d$-tuple of vertices $Q$ in $A_{-i}$, the set $Q \cup T_{-i}$
has at least $\beta$ common neighbors in $A_{i}$, 
or equivalently, $Q$ has at least $\beta$ common neighbors in $A_{i} \cap N(T_{-i})$.
Note that if $(T_i)_{i \in [r]}$ is $(d,\beta)$-typical for $(A_i)_{i \in [r]}$,
then $A_{-i}$ is $(d,\beta)$-common into $A_i \cap N(T_{-i})$ for all $i \in [r]$.

\begin{lem} \label{lem:rolling_embedding_g}
Let $G$ be an $r$-partite graph with vertex partition $(A_i)_{i \in [r]}$.
Let $H$ be an $r$-partite graph with vertex partition $(W_i)_{i \in [r]}$ and a 
$d$-degenerate labelling by $[\beta'+\beta]$.
Suppose that an $r$-tuple of sets $(T_i)_{i \in [r]}$ is $(d,\beta'+\beta)$-typical for $(A_i)_{i \in [r]}$.
Then every partial embedding $f : [\beta'] \rightarrow \bigcup_{i \in [r]} A_i$ 
can be extended to an embedding $f : [\beta'+\beta] \rightarrow \bigcup_{i \in [r]} A_i$ where all vertices with label greater than $\beta'$ are mapped into $\bigcup_{i \in [r]} (A_i \cap N(T_{-i}))$.
\end{lem}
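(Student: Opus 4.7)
The plan is to extend the partial embedding one vertex at a time, processing vertices in the order given by the $d$-degenerate labelling, and to use the $(d,\beta'+\beta)$-typicality of $(T_i)_{i\in[r]}$ at each step to guarantee the existence of a suitable image.

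More precisely, I would argue by induction on $t$, showing that for every $t \in [\beta', \beta'+\beta]$ the partial embedding $f$ can be extended to $[t]$ so that all vertices with label in $(\beta', t]$ are mapped into $\bigcup_{i \in [r]}(A_i \cap N(T_{-i}))$. Suppose we have already embedded $[t-1]$ and we wish to embed vertex $t$. Let $i \in [r]$ be the index with $t \in W_i$, and let $v_1,\dots,v_{d'}$ (with $d' \le d$) be the neighbors of $t$ in $H$ preceding $t$ in the labelling. Because $H$ is $r$-partite, each $v_j$ lies in some $W_{i'}$ with $i' \neq i$, so the images $w_j = f(v_j)$ lie in $A_{-i}$.

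Now apply $(d,\beta'+\beta)$-typicality of $(T_i)_{i\in[r]}$ to a $d$-tuple $Q$ in $A_{-i}$ that contains $\{w_1,\dots,w_{d'}\}$, padding with arbitrary vertices of $A_{-i}$ if $d' < d$ (this is legitimate since the common-neighborhood of a superset is contained in that of a subset, so the $d'$-tuple inherits the same lower bound). By the definition of typicality, $Q$ has at least $\beta'+\beta$ common neighbors in $A_i \cap N(T_{-i})$, and hence so does $\{w_1,\dots,w_{d'}\}$. Since $f$ has been defined on at most $t-1 < \beta'+\beta$ vertices of $H$, its image uses fewer than $\beta'+\beta$ vertices of $G$, and therefore at least one vertex $x \in A_i \cap N(T_{-i})$ is a common neighbor of $w_1,\dots,w_{d'}$ and is not in the image of $f$. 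Set $f(t) = x$; this yields an extension to $[t]$ with the required property.

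Iterating this one-vertex extension for $t = \beta'+1, \beta'+2, \ldots, \beta'+\beta$ produces the desired embedding. I do not foresee a serious obstacle: the only subtle point is the reduction from $d'$-tuples to $d$-tuples in applying the typicality hypothesis, together with the counting comparison $t-1 < \beta'+\beta$ that makes room for $f(t)$ in $A_i \cap N(T_{-i})$.
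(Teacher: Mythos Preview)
Your proposal is correct and follows essentially the same approach as the paper: extend $f$ one vertex at a time along the $d$-degenerate labelling, observe that the images of the at most $d$ already-embedded neighbours lie in $A_{-i}$, invoke typicality to get at least $\beta'+\beta$ common neighbours in $A_i\cap N(T_{-i})$, and use the count $t-1<\beta'+\beta$ to find an unused image. The only cosmetic difference is that the paper phrases the key step as ``$A_{-i}$ is $(d,\beta'+\beta)$-common into $A_i\cap N(T_{-i})$'' rather than explicitly padding a $d'$-tuple to a $d$-tuple, but this is the same observation.
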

\begin{proof}
As mentioned above, the given condition implies that $A_{-i}$ is $(d,\beta'+\beta)$-common into 
$A_i' := A_i \cap N(T_{-i})$.
We will extend the partial embedding $f$ one vertex at a time according to the 
order given by the vertex labelling.
Suppose that for some $t > \beta'$, 
the partial embedding has been defined over $[t-1]$, and we are about to embed vertex $t \in W_j$
for some index $j \in [r]$. 
There are at most $d$ neighbors of $t$ that precede $t$. 
Call these vertices $v_1, v_2, \ldots, v_{d'}$ (for $d' \le d$), and let $w_i = f(v_i)$ be their images.
Note that $\{w_i\}_{i \in [d']} \subseteq A_{-j}$. 
Since $A_{-j}$ is $(d,\beta'+\beta)$-common into $A_{j}'$, the vertices $\{w_i\}_{i \in [d']}$
have at least $\beta'+\beta$ common neighbors in $A_{j}'$. 
Since $H$ consists of $\beta'+\beta$ vertices, 
there exists at least one vertex $x \in A_{j}'$ sadjacent to all vertices $w_1, w_2, \ldots, w_{d'}$
and is not an image of $f$. By defining $f(t) = x$, we can extend $f$ to $[t]$.
We can find the desired extension $f$ by repeating this process.
\end{proof}

In order to find sets that satisfy the conditions of Lemma \ref{lem:rolling_embedding_g},
we need to maintain control on potentials of sets and the number of copies of $K_r$
across given family of sets.
For an $r$-tuple of sets $\mathcal{A} = (A_i)_{i \in [r]}$, we say that 
a copy of $K_r$ is {\it across $\mathcal{A}$} if it has one vertex in each set $A_i$ for $i \in [r]$. 
Let $G$ be an $r$-partite graph with vertex partition $(V_i)_{i \in [r]}$.
A copy of $K_r$ across $(V_i)_{i \in [r]}$ with vertex set $(v_i)_{i \in [r]}$ is {\it $\delta$-heavy} 
if for each $j \in [r]$, the $(r-1)$-tuple of vertices $(v_i)_{i \in [r]\setminus \{j\}}$
has at least $\delta |V_j|$ common neighbors in $V_j$.
Define $\kappa_\delta(\mathcal{A})$ as
the number of $\delta$-heavy copies of $K_r$ across $\mathcal{A}$.
Sometimes we additionally denote the graph $G$ such as in $\kappa_{\delta}(\mathcal{A}; G)$ 
to specify that we are counting copies of $K_r$ in $G$.
Throughout the proof, we carefully keep track of $\delta$-heavy copies of $K_r$. 
This corresponds to the fact that
we were keeping track of the vertices of large degree in the previous section.

Recall that for a pair of sets $X$ and $Y$,
the $(p,d,\beta)$-potential of $X$ in $Y$ is the number of 
$(p+d)$-tuples in $X$ that have fewer than $\beta$ common neighbors in $Y$.
Further recall that $X$ has $\lambda$-negligible $(p,d,\beta)$-potential in $Y$
if its $(p,d,\beta)$-potential in $Y$ is less than $\lambda^{p-1}$.
For an $r$-tuple of sets $\mathcal{A} = (A_i)_{i \in [r]}$, 
we say that {\em $\mathcal{A}$ has $\lambda$-negligible $(p,d,\beta)$-potential} 
if $A_{-i}$ has $\lambda$-negligible $(p,d,\beta)$-potential in $A_i$ for all $i \in [r]$.
The following simple fact about potentials will be useful.

\begin{prop} \label{prop:potential_monotone}
If $X$ has $\lambda$-negligible $(p,d,\beta)$-potential in $Y$ for some $\lambda \le |X|$,
then it has $\lambda$-negligible $(p',d,\beta)$-potential in $Y$ for all $p' \le p$.
\end{prop}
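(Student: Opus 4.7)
The plan is to deduce the bound on $(p',d,\beta)$-potential from the bound on $(p,d,\beta)$-potential by a straightforward monotonicity-of-common-neighborhoods argument. The key observation is that common neighborhoods shrink under taking supersets: if a $(p'+d)$-tuple $Q$ in $X$ satisfies $|N(Q) \cap Y| < \beta$, then for every extension $Q'$ of $Q$ obtained by appending vertices of $X$, we still have $|N(Q') \cap Y| \le |N(Q) \cap Y| < \beta$.

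Concretely, I would fix $p' \le p$ (the case $p'=p$ is vacuous). Using the fact above, to each "bad" $(p'+d)$-tuple counted by $\xi_{p',d,\beta}(X,Y)$ and to each $(p-p')$-tuple of vertices in $X$, I associate a bad $(p+d)$-tuple by concatenation. Since the paper's convention is that tuples are ordered (they are elements of $X^{p+d}$), this concatenation map is injective, so
\[
\xi_{p,d,\beta}(X,Y) \;\ge\; \xi_{p',d,\beta}(X,Y)\cdot |X|^{p-p'}.
\]
Combining this with the hypothesis $\xi_{p,d,\beta}(X,Y) < \lambda^{p-1}$ gives
\[
\xi_{p',d,\beta}(X,Y) \;<\; \frac{\lambda^{p-1}}{|X|^{p-p'}} \;\le\; \frac{\lambda^{p-1}}{\lambda^{p-p'}} \;=\; \lambda^{p'-1},
\]
where the middle inequality uses the hypothesis $\lambda \le |X|$. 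Hence $X$ has $\lambda$-negligible $(p',d,\beta)$-potential in $Y$, as desired.

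There is no real obstacle here; the proposition is an elementary monotonicity statement. The only thing worth double-checking is that the definition of $(p,d,\beta)$-potential counts ordered tuples (so that the concatenation injection is valid with the factor $|X|^{p-p'}$), which is ensured by the notational convention $X^t = X \times \cdots \times X$ stated in the introduction. The hypothesis $\lambda \le |X|$ is used only in the very last step to convert the $|X|^{p-p'}$ in the denominator into $\lambda^{p-p'}$.
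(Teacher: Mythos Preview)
Your proof is correct and follows essentially the same argument as the paper: extend each bad $(p'+d)$-tuple by an arbitrary $(p-p')$-tuple in $X$ to obtain a bad $(p+d)$-tuple, giving $\xi_{p',d,\beta}(X,Y)\,|X|^{p-p'} \le \xi_{p,d,\beta}(X,Y) < \lambda^{p-1}$, and then use $\lambda \le |X|$ to finish. Your remark about ordered tuples justifying the factor $|X|^{p-p'}$ is a welcome clarification.
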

\begin{proof}
Let $T$ be a $p'$-tuple of vertices in $X$ with less than $\beta$ common neighbors in $Y$.
Then for every $(p-p')$-tuple $T'$, the $p$-tuple $T \cup T'$ has less than $\beta$ common neighbors in $Y$.
Therefore
\[
	\xi_{p',d,\beta}(X,Y) |X|^{p - p'}
	\le \xi_{p,d,\beta}(X,Y) 
	< \lambda^{p-1}.
\]
Since $|X| \ge \lambda$, it follows that $\xi_{p',d,\beta}(X,Y) < \lambda^{p'-1}$.
\end{proof}

The next lemma, corresponding to Lemma~\ref{lem:rolling_drc_degenerate} in the
bipartite case, produces sets satisfying the condition of Lemma \ref{lem:rolling_embedding_g}.
As in the previous section, we defer its proof into another subsection.

\begin{lem} \label{lem:rolling_drc_degenerate_g}
Let $\varepsilon, \delta, r$ be fixed parameters and $n$ be a sufficiently large integer.
Let $G$ be an $r$-partite $(\varepsilon,\delta)$-dense graph with vertex partition $(V_i)_{i \in [r]}$,
and $\mathcal{A} = (A_i)_{i \in [r]}$ be an $r$-tuples of sets with $A_i \subseteq V_i$ for all $i \in [r]$.
For $s = \left( \frac{d \log n}{\log \log n}  \right)^{1/(2r)}$, suppose that the following conditions hold,
\begin{itemize}
  \setlength{\itemsep}{1pt} \setlength{\parskip}{0pt}
  \setlength{\parsep}{0pt}
  \item[(a)] $\varepsilon \le \left(\frac{\delta}{2}\right)^{2r}$,
	$\frac{\lambda}{n} \le \left(\frac{n_0}{n}\right)^{2} \left(\frac{\delta}{\log n}\right)^{5r^2 (10s)^{2r-1}}$, and
	$\frac{\beta}{n} \le \left(\frac{\lambda}{n}\right)^{2r}$,
  \item[(b)] $n_0 \le |V_i| \le n$ for all $i \in [r]$,
  \item[(c)] $\kappa_{(\delta/2)^r}(\mathcal{A}) \ge \frac{1}{2}\left(\frac{\delta}{\log n}\right)^{2r^2 (10s)^r} \prod_{i=1}^{r} |V_i|$.
\end{itemize}
Then there exists an $r$-tuple $\mathcal{T} = (T_i)_{i \in [r]}$ satisfying the following properties:
\begin{itemize}
  \setlength{\itemsep}{1pt} \setlength{\parskip}{0pt}
  \setlength{\parsep}{0pt}
  \item[(i)] $\mathcal{B} := (V_i \cap N(T_{-i}))_{i \in [r]}$ is $\big(d,2\beta)$-common,
  \item[(ii)] $\mathcal{B}$ has $\lambda$-negligible $\left((r-1)s, d, 2\beta\right)$-potential, and
  \item[(iii)] $\kappa_{\delta^r}(\mathcal{B}) \ge \left(\frac{\delta}{\log n}\right)^{2r^2(10s)^r} \prod_{i=1}^{r} |V_i|$.
\end{itemize}
Furthermore , if $\mathcal{A}$ has $\lambda$-negligible $\left((r-1)s, d, \beta\right)$-potential, then
\begin{itemize}
  \item[(iv)] $\mathcal{T}$ is $(d,\beta)$-typical for $\mathcal{A}$.
\end{itemize}
\end{lem}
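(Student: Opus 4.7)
The plan is to iterate the two-step dependent random choice from the proof of Lemma~\ref{lem:rolling_drc_degenerate} once per part, choosing $s$-tuples $\hat T_1, \hat T_2, \ldots, \hat T_r$ sequentially and then setting $B_j = V_j \cap N(T_{-j})$. As in the bipartite proof, at each step I encode every property I need to preserve into a single combined random variable and fix $T_i$ to a value where that random variable is at least (or at most) its expectation. To mirror the role of $V_2'$ and $A_2'$ in Lemma~\ref{lem:rolling_drc_degenerate}, I first identify for each $j$ a subset $V_j^\star \subseteq V_j$ of vertices that appear as the $V_j$-coordinate of many $(\delta/2)^{r-1}$-heavy copies of $K_r$ across $\mathcal{V}$, together with the analogous $A_j^\star \subseteq A_j$ for copies across $\mathcal{A}$; hypothesis~(c) and $(\varepsilon,\delta)$-density ensure these sets are appropriately large. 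Each $\hat T_i$ is drawn uniformly and independently from $A_i^\star \cap \bigcap_{j<i} N(T_j)$.

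The combined random variable at step $i$ has the form
\[
    \kappa_{(\delta/2)^r}\!\bigl(\mathcal{B}^{(i)}\bigr)
    \;-\; \Bigl(\tfrac{\delta}{\log n}\Bigr)^{\!C_i\, s^{r-1}} \prod_{j \in [r]}|V_j|
    \;-\; \sum_{j \in [r]} \tfrac{n^{(r-1)s+1}}{\lambda^{(r-1)s-1}}\, \xi_{(r-1)s,\,d,\,2\beta}\!\bigl(B_{-j}^{(i)},\, B_j^{(i)}\bigr),
\]
where $\mathcal{B}^{(i)}$ denotes the partial construction after $i$ choices and $C_i$ is a constant depending on $r$ and $i$. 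The $\kappa$-term enforces condition~(iii), the subtracted threshold ensures that the heavy $K_r$ count does not collapse, and the potential terms enforce~(ii) (and, via Proposition~\ref{prop:potential_monotone}, eventually~(i)). The essential new ingredient compared to Lemma~\ref{lem:rolling_drc_degenerate} is the lower bound on $\BBE[\kappa_{(\delta/2)^r}(\mathcal{B}^{(i)})]$, the $r$-partite analogue of the convex estimate in \eqref{eq:intersection_size}--\eqref{eq:mu_1}: expanding $\kappa$ as an $r$-fold sum of codegree expressions to the $s$-th power and iterating Jensen's inequality $r-1$ times (each time invoking $(\varepsilon,\delta)$-density of $G$ to replace raw codegrees by contributions from heavy copies of $K_r$) gives a loss of at most $(\delta/\log n)^{(10s)^{r-1}}$ per step. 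Accumulating this loss across $r$ steps explains the factor $(\delta/\log n)^{2r^2(10s)^r}$ in both hypothesis~(c) and conclusion~(iii) and pins down $s = (d\log n / \log\log n)^{1/(2r)}$, since hypothesis~(a) is calibrated so that the per-step losses $(2\beta/|A_i^\star|)^s$ in Proposition~\ref{prop:negligible_potential}(i) survive $r$-fold iteration.

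Properties~(i), (ii), and (iv) then follow from routine applications of Proposition~\ref{prop:negligible_potential} along the lines of the bipartite proof: (i) by peeling off $s$ coordinates via part~(i) at each of the $r-1$ random selections, starting from a trivial bound on the potential of $V_{-j}$ in $V_j$; (ii) by $r-1$ applications of part~(ii), together with Proposition~\ref{prop:potential_monotone} to convert between different values of $p$; and (iv), under the additional $((r-1)s, d, \beta)$-potential hypothesis on $\mathcal{A}$, by invoking part~(ii) $r-1$ times to reduce this potential to a $0$-potential on $A_j \cap N(T_{-j})$, which is exactly the typicality assertion. The main technical obstacle is not any single estimate but rather assembling all these goals into one random variable whose sign controls every desired property at every step, while resolving the circular dependence that each chosen $T_i$ shrinks the selection pools $A_{i'}^\star \cap N(T_1) \cap \cdots \cap N(T_i)$ used to pick later $T_{i'}$; hypothesis~(a) provides exactly the buffer required for this.
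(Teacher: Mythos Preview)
Your overall architecture is right---iterate $r$ times, at each step fix $T_i$ by first moment on a combined random variable---but the quantity you propose to track will not close.

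The heart of the bipartite proof is not a lower bound on a single scalar; it is the two-variable inequality \eqref{eq:mu_1}, which controls $\BBE[|\hat B_2\cap A_2'|\cdot|\hat B_2'|]$ in terms of $\BBE[|\hat B_2\cap A_2'|]$, so that after dividing one obtains a bound on $|B_2'|$ that is \emph{independent of $|A_2'|$}. Your combined variable tracks only $\kappa_{(\delta/2)^r}(\mathcal B^{(i)})$. Any Jensen estimate on its expectation will have the shape $\BBE[\kappa(\mathcal B^{(i+1)})]\ge(\text{factor})\cdot\kappa(\mathcal B^{(i)})$, so the output depends multiplicatively on the input $\kappa(\mathcal A)$; the lemma then fails to be self-regenerating (note that conclusion~(iii) is actually \emph{larger} than hypothesis~(c)), and the outer iteration in Theorem~\ref{thm:main_extend} collapses. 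The vertex-based sets $V_j^\star$, $A_j^\star$ are exactly the $r=2$ incarnation of the right object, but for $r\ge3$ a single vertex in many heavy $K_r$'s does not carry enough structure to push the convexity argument through the next step.

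The paper resolves this by separating two families: a \emph{fixed} target family $\mathcal F$ of $\delta^r$-heavy $K_r$'s across $(V_i)$ (large by Lemma~\ref{lem:counting}, independent of $\mathcal A$) and a \emph{shrinking} auxiliary family $\mathcal K_t$ of copies of $K_{r-t}$ across $(A_{i,t})_{i>t}$. The combined variable is $\rho(\hat{\mathcal K}_{t+1},\mathcal F(\hat{\mathcal B}_{t+1}))-(\cdot)\,|\hat{\mathcal K}_{t+1}|\,|\mathcal F|$, the direct $r$-partite analogue of the product in $\mu$; at $t=r$ one has $|\mathcal K_r|=1$, so $\rho$ collapses to $|\mathcal F(\mathcal B)|$ and the output bound is purely in terms of $|\mathcal F|$. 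To relate $\BBE[\rho]$ to $\BBE[|\hat{\mathcal K}_{t+1}|]$ the paper first performs a dyadic decomposition of $\mathcal K_t$ by degree into $\mathcal F(\mathcal B_t)$ and restricts to one class at the cost of a $\log n$ factor---this, not any density estimate, is the source of the $(\delta/\log n)^{\ldots}$ in (c) and (iii), which your iterated-Jensen sketch does not account for. The potential bookkeeping (your treatment of (i), (ii), (iv) via Proposition~\ref{prop:negligible_potential}) is essentially correct and matches the paper's $\xi(\hat{\mathcal A}_{t+1})+\eta(\hat{\mathcal B}_{t+1})$ terms; it is the $\kappa$-tracking that needs to be replaced by the $\rho(\mathcal K_t,\mathcal F)$ mechanism.
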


We now prove Theorem \ref {thm:main} in a slightly more general form as stated below,
using the tools developed in this subsection.

\begin{thm} \label{thm:main_extend}
For fixed $\varepsilon, \delta, r$  satisfying $\varepsilon \le (\frac{\delta}{2})^{2r}$, there exists $c$ such that the following holds.
Suppose that $\beta_0 \le e^{-c(d_0 \log n)^{(2r-1)/2r} (\log \log n)^{1/2r}} \gamma^{4r} n$
and $n$ is sufficiently large.
Let $H$ be a $d_0$-degenerate graph with bandwidth $\beta_0$
and an $r$-partition $(W_i)_{i \in [r]}$.
If $G$ is an $n$-vertex $(\varepsilon^2,\delta)$-dense
$r$-partite graph with vertex partition $(V_i)_{i \in [r]}$ satisfying 
$|V_i| \ge \max\{\frac{1}{1-\varepsilon}|W_i|, \gamma n\}$ for all $i \in [r]$, 
then $G$ contains a copy of $H$.
\end{thm}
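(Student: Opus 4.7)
The plan is to mimic the proof of Theorem~\ref{thm:bipartite_general}, using Lemmas~\ref{lem:rolling_embedding_g} and~\ref{lem:rolling_drc_degenerate_g} in place of the bipartite ones. First I apply Lemma~\ref{lem:bandwidth_degenerate} to produce a $d$-degenerate $\beta$-local labelling of $V(H)$ with $d=5d_0$ and $\beta=\beta_0\log_2(4\beta_0)$, and partition $[m]$ into consecutive intervals $I_1,I_2,\ldots$ of length $\beta$, setting $J_t=\bigcup_{s\le t}I_s$. By $\beta$-locality, every edge of $H$ lies inside a single interval or across two consecutive intervals, which is what will let us discard already-embedded vertices during the iteration. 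I take $s=(d\log n/\log\log n)^{1/(2r)}$ and $\lambda=(\varepsilon\gamma)^2 n\,(\delta/\log n)^{5r^2(10s)^{2r-1}}$, and choose $c$ large enough so that the hypothesis $\beta_0\le e^{-c(d_0\log n)^{(2r-1)/(2r)}(\log\log n)^{1/(2r)}}\gamma^{4r}n$, inflated by the $O(\log n)$ factor $\log_2(4\beta_0)$, still yields $\beta\le(\lambda/n)^{2r}n$.

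The embedding is built inductively. At step $t$, I maintain a partial embedding $f\colon J_{t-1}\to V(G)$ respecting the $r$-partitions together with an $r$-tuple $\mathcal{A}^{(t)}=(A^{(t)}_i)_{i\in[r]}$ with $A^{(t)}_i\subseteq V_i\setminus f(J_{t-2})$ satisfying (i)~$f(I_{t-1})\subseteq\bigcup_i A^{(t)}_i$, (ii)~$\mathcal{A}^{(t)}$ has $\lambda$-negligible $((r-1)s,d,2\beta)$-potential, and (iii)~$\kappa_{(\delta/2)^r}(\mathcal{A}^{(t)};G_t)\ge\tfrac{1}{2}(\delta/\log n)^{2r^2(10s)^r}\prod_i|V_i\setminus f(J_{t-2})|$, where $G_t=G[V(G)\setminus f(J_{t-2})]$. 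Since at most $(1-\varepsilon)|V_i|$ vertices of $V_i$ are ever embedded, the parts of $G_t$ keep size at least $\varepsilon\gamma n$ and $G_t$ remains $(\varepsilon,\delta)$-dense, so hypothesis~(a) of Lemma~\ref{lem:rolling_drc_degenerate_g} holds with $n_0=\varepsilon\gamma n$. I invoke that lemma on $\mathcal{A}^{(t)}$ inside $G_t$ with $\beta_{\mathrm{lem}}=2\beta$; its Property~(iv) produces a tuple $\mathcal{T}^{(t)}$ that is $(d,2\beta)$-typical for $\mathcal{A}^{(t)}$, and Lemma~\ref{lem:rolling_embedding_g} applied to $H[I_{t-1}\cup I_t]$ with partial embedding $f|_{I_{t-1}}$ then extends $f$ to $J_t$ with $f(I_t)\subseteq\bigcup_i(A^{(t)}_i\cap N(T^{(t)}_{-i}))$. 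I set $A^{(t+1)}_i=(V_i\cap N(T^{(t)}_{-i}))\setminus f(J_{t-1})$, which contains $f(I_t)\cap V_i$ by construction.

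Transferring invariants~(i) and~(ii) to $\mathcal{A}^{(t+1)}$ is routine, using Properties~(i) and~(ii) of the lemma at threshold $4\beta$ and discarding at most $|f(I_{t-1})|\le\beta$ further vertices. The delicate part is~(iii): each $\delta^r$-heavy $K_r$ across $\mathcal{B}^{(t)}:=(V_i\cap N(T^{(t)}_{-i}))\cap V(G_t)$ in $G_t$ that avoids $f(I_{t-1})$ remains in $\mathcal{A}^{(t+1)}$ with its common-neighbor counts shrinking by at most $\beta$, which is negligible compared to $(\delta^r-(\delta/2)^r)\gamma n$, so it stays $(\delta/2)^r$-heavy in $G_{t+1}$; the $K_r$'s destroyed by hitting $f(I_{t-1})$ number at most $\beta\cdot\max_j\prod_{i\ne j}|V_i|$, far below the lemma's lower bound on $\kappa_{\delta^r}(\mathcal{B}^{(t)})$. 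To seed the induction I set $\mathcal{A}^{(0)}$ by running Lemma~\ref{lem:rolling_drc_degenerate_g} once on $(V_i)_{i\in[r]}$ in $G$; the only nontrivial input is condition~(c), which follows from $(\varepsilon^2,\delta)$-density and $\varepsilon\le(\delta/2)^{2r}$ via a standard defect count of $(r-1)$-tuples whose common neighborhoods in $V_j$ are atypically small. The main obstacle I anticipate is parameter bookkeeping: choosing $c$ so that the exponent $5r^2(10s)^{2r-1}\log(\log n/\delta)$ controlling $\lambda$, multiplied by $2r$ to control $\beta$, matches the form $c(d_0\log n)^{(2r-1)/(2r)}(\log\log n)^{1/(2r)}$ appearing in the hypothesis. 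All the genuinely probabilistic content is already inside Lemma~\ref{lem:rolling_drc_degenerate_g}; this theorem is its orchestration.
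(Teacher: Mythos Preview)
Your proposal is correct and follows essentially the same route as the paper's own proof: the same choice of $s$ and $\lambda$, the same application of Lemma~\ref{lem:bandwidth_degenerate}, the same inductive invariants (negligible $((r-1)s,d,2\beta)$-potential and a lower bound on $\kappa_{(\delta/2)^r}$ in the residual graph), the same use of Lemma~\ref{lem:rolling_drc_degenerate_g} with $\beta_{\mathrm{lem}}=2\beta$ followed by Lemma~\ref{lem:rolling_embedding_g}, and the same update $\mathcal{A}^{(t+1)}_i=(V_i\cap N(T^{(t)}_{-i}))\setminus f(I_{t-1})$. Your treatment of the heavy-$K_r$ transfer and of the initial verification of condition~(c) is in fact slightly more explicit than the paper's.
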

\begin{proof}
Define $s = \left( \frac{d\log n}{\log \log n}  \right)^{1/2r}$,
$\lambda = \left(\frac{\delta}{\log n}\right)^{5r^2(10s)^{2r-1}}\varepsilon^2 \gamma^2 n$,
and $n_0 = \varepsilon \gamma n$.
Let $H$ be a given $m$-vertex $d_0$-degenerate $r$-partite graph of bandwidth $\beta_0$
with vertex partition $(W_i)_{i \in [r]}$.
By Lemma~\ref{lem:bandwidth_degenerate}, there exists a labelling of the vertex
set of $H$ by $[m]$ in which all adjacent pairs are at most $\beta := \beta_0 \log_2 (4\beta_0)$
apart and all vertices have at most $d := 5d_0$ neighbors preceding itself.
For sufficiently large $c$ depending on $\varepsilon, \delta, r$, the given bound on 
$\beta_0$ implies
$\frac{2\beta}{n} \le \left(\frac{\lambda}{n}\right)^{2r}$
for large enough $n$.

Let $G$ be a given $n$-vertex $r$-partite graph on $V := V(G)$ 
with vertex partition $(V_i)_{i \in [r]}$, where
$|W_i| \le (1-\varepsilon)|V_i|$ for all $i \in [r]$.
Our goal is to find an embedding $f$ of $H$
into $G$ for which $f(W_i) \subseteq V_i$ for all $i \in [r]$.
Define $I_t = ((t-1)\beta, t\beta] \cap [m]$ for $t \ge 1$.
We will embed $H$ using an iterative algorithm. 
For $t \ge 1$, at the beginning of the $t$-th step, we are given an 
$r$-tuple of sets $\mathcal{A} = (A_i)_{i \in [r]}$ 
and a partial embedding $f : [(t-1)\beta] \rightarrow V(H)$ satisfying 
the following properties. Define $V_i^{(t)} := V_i \setminus f([(t-2)\beta])$ for all $i \in [r]$.
\begin{itemize}
  \setlength{\itemsep}{1pt} \setlength{\parskip}{0pt}
  \setlength{\parsep}{0pt}
  \item[(i)] $A_i \subseteq V_i^{(t)}$ for all $i \in [r]$,
  \item[(ii)] $f(I_{t-1}) \subseteq \bigcup_{i \in [r]} A_i$.
  \item[(iii)] $\mathcal{A}$ has $\lambda$-negligible $((r-1)s, d, 2\beta)$-potential, and
  \item[(iv)] $\kappa_{(\delta/2)^{r}}(\mathcal{A}; G^{(t)}) \ge \frac{1}{2}\left(\frac{\delta}{\log n}\right)^{2r^2(10s)^{r}} \prod_{i=1}^{r} |V_i^{(t)}|$ where $G^{(t)}$ is the subgraph of $G$ induced on $\bigcup_{i \in [r]} V_i^{(t)}$.
\end{itemize}
Given $\mathcal{A}$ and $f$ as above, the $t$-th step of the iteration finds an $r$-tuple of sets 
$\mathcal{B} = (B_i)_{i \in [r]}$ and extends $f$ so that 
the conditions for the $(t+1)$-th step is satisfied (where $\mathcal{A}$ is replaced with $\mathcal{B}$).
The process can then be continued until we find an embedding of $H$ into $G$.

As an intial step, apply Lemma \ref{lem:rolling_drc_degenerate_g}
to $\mathcal{A}_{\ref{lem:rolling_drc_degenerate_g}} = (V_i)_{i \in [r]}$ 
to find an $r$-tuple of sets $\mathcal{A}^{(0)} = (A_i^{(0)})_{i \in [r]}$ 
that is $(d, 2\beta)$-common, has $\lambda$-negligible $((r-1)s, d, 4\beta)$-potential, and satisfies
$\kappa_{\delta^r}(\mathcal{A}^{(0)}) \ge \left(\frac{\delta}{\log n}\right)^{2r^2(10s)^r} \prod_{i=1}^{r} |V_i|$.
This provides the $r$-tuple needed to begin the step $t=1$ (where $f$ is the
partial embedding defined on the empty-set).

Now suppose that for some $t \ge 1$, we completed the $(t-1)$-th step of the algorithm
and are given $\mathcal{A}$ and $f$ satisfying the properties listed above.
Since $G$ is $(\varepsilon^2, \delta)$-dense and $|W_i| \le (1-\varepsilon)|V_i|$
for all $i \in [r]$, the subgraph $G^{(t)}$ is $(\varepsilon, \delta)$-dense. Thus we can apply Lemma \ref{lem:rolling_drc_degenerate_g}
to the graph $G^{(t)}$ 
with $(n_0)_{\ref{lem:rolling_drc_degenerate_g}} = \varepsilon n_0$,
$\beta_{\ref{lem:rolling_drc_degenerate_g}} = 2\beta$, 
and $\mathcal{A}_{\ref{lem:rolling_drc_degenerate_g}} = \mathcal{A}$, and obtain
an $r$-tuple $\mathcal{T} = (T_i)_{i \in [r]}$ for which $C_i := N(T_{-i}) \cap V_i$
and $\mathcal{C} := (C_i)_{i \in [r]}$ satisfies the following properties:
\begin{itemize}
  \setlength{\itemsep}{1pt} \setlength{\parskip}{0pt}
  \setlength{\parsep}{0pt}
  \item[(i')] $\mathcal{C}$ is $\big(d,4\beta)$-common,
  \item[(ii')] $\mathcal{C}$ has $\lambda$-negligible $\left((r-1)s, d, 4\beta\right)$-potential, 
  \item[(iii')] $\kappa_{\delta^{r}}(\mathcal{C}; G^{(t)}) \ge \left(\frac{\delta}{\log n}\right)^{2r^2(10s)^r} \prod_{i=1}^{r} |V_i^{(t)}|$, and
  \item[(iv')] $\mathcal{T}$ is $(d,2\beta)$-typical for $\mathcal{A}$.
\end{itemize}

Consider $f|_{I_{t-1}}$ as a partial embedding of $H[I_{t-1} \cup I_{t}]$ defined
on $I_{t-1}$ and apply Lemma~\ref{lem:rolling_embedding_g} to extend it to $I_{t}$. 
Since $H$ has no edge between $I_{t}$ and $[(t-2)\beta]$, it gives an extension of $f$
to $I_t$ so that $f(I_t) \subseteq \bigcup_{i \in [r]} (A_i \cap C_i) \subseteq \bigcup_{i \in [r]} C_i$.
Define $B_i := C_i \setminus f(I_{t-1})$ for each $i \in [r]$ and denote $\mathcal{B} = (B_i)_{i \in [r]}$.
Since $C_i \subseteq V_i^{(t)}$, it follows that $B_i \subseteq V_i^{(t+1)}$ for all $i \in [r]$.
Thus $\mathcal{B}$ satisfies Property~(i) for the $t$-th step.
It also satisfies Property~(ii) since $f(I_t)$ and $f(I_{t-1})$ are disjoint by 
the definition of embedding and $f(I_t) \subseteq \bigcup_{i \in [r]} C_i$.

Note that $G^{(t+1)}$ is obtained from $G^{(t)}$ by removing $\beta$ vertices.
Hence for a set of vertices $X$, its $(p,d,2\beta)$-potential in $B_i$ is at most
its $(p,d,3\beta)$-potential in $C_i$, which is at most its $(p,d,4\beta)$-potential in $C_i$, for all $i \in [r]$.
Thus Property (iii) follows from Property (ii').
Since $\delta^{r} \varepsilon n_0 - \beta \ge (\frac{\delta}{2})^{r} \varepsilon n_0$,  
every $\delta^{r}$-heavy 
copy of $K_r$ in $G^{(t)}$ is $(\frac{\delta}{2})^{r}$-heavy in $G^{(t+1)}$. 
Therefore
\[
	\kappa_{(\delta/2)^{r}}(\mathcal{B}; G^{(t+1)})
	\ge \kappa_{\delta^{r}}(\mathcal{B}; G^{(t)}) - \frac{\beta}{n} \prod_{i=1}^{r} |V_i^{(t)}|
	\ge \frac{1}{2}\left(\frac{\delta}{\log n}\right)^{2r^2(10s)^r} \prod_{i=1}^{r} |V_i^{(t+1)}|,
\]
which verifies Property (iv) and concludes the proof.
\end{proof}

\subsection{Proof of Lemma \ref{lem:rolling_drc_degenerate_g}}

In this subsection, we prove Lemma~\ref{lem:rolling_drc_degenerate_g} through a 
slightly more general lemma applicable to later applications as well.

\subsubsection{Generalized version}

For an $r$-tuple of sets $\mathcal{A} = (A_i)_{i \in [r]}$, we say that 
an $r$-partite graph $F$ is {\em across $\mathcal{A}$}, or {\em crosses $\mathcal{A}$},
if the $i$-th part of $F$ is in $A_i$ for all $i \in [r]$.
For a family $\mathcal{F}$ of $r$-partite subgraphs of $G$, we use the notation
$\mathcal{F}(\mathcal{A})$ to denote the subfamily of graphs in $\mathcal{F}$ that
cross $\mathcal{A}$.
For two graphs $F_1$ and $F_2$ that cross $\mathcal{A}$, 
we say that $F_1$ and $F_2$ are {\it adjacent} if for every distinct
$j, j' \in [r]$, all vertices in the $j$-th part of $F_1$ is adjacent 
to all the vertices of the $j'$-th part of $F_2$.
For a family $\mathcal{F}$ of graphs that cross $\mathcal{A}$,
we say that a graph $F$ is {\em $\delta$-heavy with respect to $\mathcal{F}$}
if there are at least $\delta|\mathcal{F}|$ graphs in $\mathcal{F}$
adjacent to $F$. For another family of graphs $\mathcal{F}'$, we say 
that $\mathcal{F}'$ is {\em $\delta$-heavy with respect to $\mathcal{F}$} if 
each graph in $\mathcal{F}'$ is $\delta$-heavy with respect to $\mathcal{F}$.
Recall that a copy $K$ of $K_r$ across an $r$-partition $(V_i)_{i \in [r]}$ 
whose vertices are given by $(v_i)_{i \in [r]}$
is $\delta$-heavy if for each $j \in [r]$, 
the vertices $(v_i)_{i \in [r] \setminus \{j\}}$ have at least $\delta|V_i|$
common neighbors in $V_i$. Thus if $K$ is $\delta$-heavy, then it is
$\delta^r$-heavy with respect to the family of all copies of the empty $r$-vertex
graph across the partition.

The following lemma implies Lemma~\ref{lem:rolling_drc_degenerate_g}.

\begin{lem} \label{lem:rolling_drc_general}
Suppose that parameters satisfying the following conditions are given:
\[
	\left(\frac{\beta}{n}\right)^{1/(c+r+1)}
	\le
	\frac{\lambda}{n} 
	\le 
	\min\left\{\left(\frac{n_0}{n}\right)^{2}\left(\frac{\delta_1 \delta_2}{\log^2 n}\right)^{2(10s)^{r-1}},
	\left(\frac{1}{2rn^{d+r+f+1}}\right)^{1/s}
	 \right\},
\]
where $n$ is sufficiently large.
Let $G$ be an $r$-partite graph with vertex partition $(V_i)_{i \in [r]}$ satisfying
$n_0 \le |V_i| \le n$ for all $i \in [r]$.
Let $F_0$ be a $f$-vertex $r$-partite graph and $\mathcal{F}$ be a family of copies 
of $F_0$ across $(V_i)_{i \in [r]}$. 
Let $\mathcal{K}$ be a family of copies of $K_r$ that is $\delta_1$-heavy with respect to $\mathcal{F}$.
Suppose that an $r$-tuple of sets $\mathcal{A} = (A_i)_{i \in [r]}$ with $A_i \subseteq V_i$ satisfy
$|\mathcal{K}(\mathcal{A})| \ge \delta_2 \prod_{i=1}^{r} |V_i|$.
Then there exists an $r$-tuple $\mathcal{T} = (T_i)_{i \in [r]}$ satisfying the following properties:
\begin{itemize}
  \setlength{\itemsep}{1pt} \setlength{\parskip}{0pt}
  \setlength{\parsep}{0pt}
%  \item[(i)] $\mathcal{B} := (V_i \cap N(T_{-i}))_{i \in [r]}$ is $\big(d,2\beta)$-common,
  \item[(i)] $\mathcal{B} := (V_i \cap N(T_{-i}))_{i \in [r]}$ has $\lambda$-negligible $\left(s', d, 2\beta\right)$-potential for all $s' \le c s$, and
  \item[(ii)] $|\mathcal{F}(\mathcal{B})| \ge \left(\frac{\delta_1}{\log^2 n}\right)^{(10s)^r} |\mathcal{F}|$.
\end{itemize}
Furthermore if $\mathcal{A}$ has $\lambda$-negligible $\left((r-1)s, d, \beta\right)$-potential, then
\begin{itemize}
  \setlength{\itemsep}{1pt} \setlength{\parskip}{0pt}
  \setlength{\parsep}{0pt}
  \item[(iii)] $\mathcal{T}$ is $(d,\beta)$-typical for $\mathcal{A}$.
\end{itemize}
\end{lem}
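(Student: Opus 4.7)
The plan is to mimic the two-stage dependent random choice of Lemma~\ref{lem:rolling_drc_degenerate}, but iterated $r$ times: we will sequentially choose $s$-tuples $\hat T_1, \hat T_2, \ldots, \hat T_r$, each drawn uniformly and independently from $V_j$ (restricted to the ``good'' vertices revealed by the previous stages). Set $B_i^{(j)} := V_i \cap N(\bigcup_{1 \le k < j,\, k \neq i} \hat T_k)$, so that after stage $j$ the partial intersections $B_i^{(j+1)}$ are determined. At each stage we design a single combined random variable in the spirit of $\mu$ and $\nu$ from Section~\ref{sec:bipartite}: it contains the ``main'' term counting the remaining copies of $F_0 \in \mathcal F$ crossing $(B_i^{(j+1)})_{i \in [r]}$, minus a penalty proportional to the count itself times $(\delta_1/\log^2 n)^{(10s)^{j}}$, minus penalty terms of the form $(n^{\cdot}/\lambda^{\cdot})\,\xi_{p, d, 2\beta}(B_j^{(j+1)}, B_i^{(j+1)})$ for the potentials we wish to keep negligible. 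Nonnegativity of the expectation then lets us fix a deterministic $\hat T_j$ meeting all the individual inequalities simultaneously. After stage $r$ we set $\mathcal T = (\hat T_j)_{j \in [r]}$, so that $\mathcal B = (B_i^{(r+1)})_{i \in [r]}$.

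The lower bound on $|\mathcal F(\mathcal B)|$ in (ii) is the heart of the argument. For the initial stage we use $|\mathcal K(\mathcal A)| \ge \delta_2 \prod |V_i|$ together with the $\delta_1$-heaviness of $\mathcal K$ with respect to $\mathcal F$ to get a lower bound on $|\mathcal F(\mathcal A)|$ of the right order. At each subsequent stage the expected number of surviving $F_0$-copies is
\[
\BBE\bigl[|\mathcal F(\mathcal B^{(j+1)})|\bigr] = \sum_{F \in \mathcal F(\mathcal B^{(j)})} \Pr\bigl(\hat T_j \subseteq N(F\setminus V_j)\bigr),
\]
which, after applying Jensen's inequality as in \eqref{eq:intersection_size} and the heaviness assumption (which forces the average common-neighbor degree into $V_j$ to be $\ge \delta_1 |V_j|$ on a constant fraction of the $F$'s), yields a multiplicative loss of at most $(\delta_1/\log^2 n)^{(10s)^{j-1}}$ per stage. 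Iterating over $r$ stages produces the bound $(\delta_1/\log^2 n)^{(10s)^r}|\mathcal F|$ appearing in (ii). The analogous trick to the $A_2'$/$V_2'$ bookkeeping of the bipartite proof is already built into the heaviness hypothesis, so the convexity step goes through cleanly.

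Potentials and typicality are handled by Proposition~\ref{prop:negligible_potential}. For property (i), at stage $j$ we add penalty terms $(n^{s'+d+2}/\lambda^{s'-1})\,\xi_{s', d, 2\beta}(B_j^{(j+1)}, B_i^{(j+1)})$ for each $s' \le cs$, which by part~(i) of the proposition shrink by a factor $(2\beta/n_0)^s$ in expectation relative to the previous stage — a factor that, given $\beta/n \le (\lambda/n)^{c+r+1}$ and the trivial bound $\xi_{p, d, 2\beta}(X,Y) \le n^{p+d}$, is absorbed by the main term thanks to the hypothesis $\lambda/n \le (1/2rn^{d+r+f+1})^{1/s}$. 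For property (iii), observe that $(d,\beta)$-typicality fails for $\mathcal T$ relative to $\mathcal A$ exactly when some $d$-tuple $Q \subseteq A_{-i}$ together with $T_{-i}$ has fewer than $\beta$ common neighbors in $A_i$, i.e.\ when some particular $((r-1)s+d)$-tuple lies in the $((r-1)s, d, \beta)$-potential of $A_{-i}$ in $A_i$; we add the term $n^{d+1}\,|\{(Q, T_{-i}): \ldots\}|$ to the combined variable, and invoke part~(ii) of Proposition~\ref{prop:negligible_potential} $r$ times to reduce it to $\xi_{(r-1)s, d, \beta}(A_{-i}, A_i) / n_0^{(r-1)s}$, which is less than $1$ under the hypothesis of (iii) because $\mathcal A$ has $\lambda$-negligible $((r-1)s, d, \beta)$-potential and $\lambda \le n_0$.

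The main obstacle will be bookkeeping: organizing the one combined random variable per stage so that the main $|\mathcal F(\cdot)|$-term dominates \emph{every} penalty term after taking expectations, and verifying that the geometric decay $(\delta_1/\log^2 n)^{(10s)^j}$ at stage $j$ is compatible with the prescribed loss $(\delta_1/\log^2 n)^{(10s)^r}$ in (ii). Concretely, this amounts to checking an explicit system of inequalities in $\lambda, \beta, n_0, n, \delta_1, \delta_2, s$ at each stage, where the hypotheses of the lemma on $\lambda/n$ and $\beta/n$ are exactly what is needed to make every check pass. Once this case analysis is carried out cleanly the conclusion follows by taking $\mathcal T = (\hat T_j)_{j \in [r]}$ for a favorable realization at every stage.
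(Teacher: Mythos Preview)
Your plan has the right shape---iterated dependent random choice with a single combined random variable per stage---but there is a genuine gap in how you handle the main term. You propose to track $|\mathcal F(\mathcal B^{(j)})|$ directly and to justify the Jensen step by saying that ``the heaviness assumption forces the average common-neighbor degree into $V_j$ to be $\ge \delta_1|V_j|$ on a constant fraction of the $F$'s.'' That is not what $\delta_1$-heaviness says. Heaviness is a statement about each $K\in\mathcal K$ having at least $\delta_1|\mathcal F|$ adjacent $F$'s; it gives a lower bound on the pair count $\rho(\mathcal K,\mathcal F)$, not on $\sum_{F}|N(F\setminus V_j)\cap S|$ for any sampling set $S$. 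Without such a lower bound the convexity step for $\BBE[|\mathcal F(\mathcal B^{(j+1)})|]$ does not go through, and after the first stage you have no surviving hypothesis that replaces heaviness.

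The paper resolves this by carrying an auxiliary shrinking family $\mathcal K_t$ of copies of $K_{r-t}$ across $(A_{i,t})_{i>t}$ and tracking the adjacency count $\rho(\mathcal K_t,\mathcal F(\mathcal B_t))$ rather than $|\mathcal F(\mathcal B_t)|$ alone. At stage $t{+}1$ the sample $\hat T_{t+1}$ is drawn from $A_{t+1,t}$ (not from $V_{t+1}$), and $\mathcal K_{t+1}$ consists of the $K_{r-t-1}$'s whose extension by every element of $\hat T_{t+1}$ lies in $\mathcal K_t$. A dyadic decomposition of $\mathcal K_t$ by $\Gamma$-degree (the auxiliary bipartite graph between $\mathcal K_t$ and $\mathcal F(\mathcal B_t)$) is what makes the Jensen step clean: inside a dyadic class $\mathcal K_t'$ every $K$ has degree $\approx\alpha$, so $\sum_F d(K,F)\ge\tfrac12\alpha\,d(K)$ and the convexity argument yields $\BBE[\rho(\hat{\mathcal K}_{t+1},\mathcal F(\hat{\mathcal B}_{t+1}))]\ge(\text{const})\cdot\BBE[|\hat{\mathcal K}_{t+1}|]\cdot|\mathcal F|$. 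The $(10s)^t\to(10s)^{t+1}$ growth in the exponent comes precisely from this $s$-th-power step, compounded with the lower bound on $|\mathcal K_t'|$. Your outline has no analogue of $\mathcal K_t$, no dyadic step, and hence no way to propagate a density lower bound from stage to stage.

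A second, related gap is in your treatment of~(iii). For Proposition~\ref{prop:negligible_potential}(ii) to reduce $\xi_{0,d,\beta}(A_{-i},A_i\cap N(T_{-i}))$ to $\xi_{(r-1)s,d,\beta}(A_{-i},A_i)$, each $T_j$ with $j\neq i$ must be drawn from a subset of $A_{-i}$, i.e.\ from $A_j$; sampling $\hat T_j$ from $V_j$ does not suffice. In the paper $\hat T_{t+1}\subseteq A_{t+1,t}\subseteq A_{t+1}$ by construction, and this simultaneously ensures $T_i\subseteq N(T_{i'})$ (so that $\mathcal K_t$ actually consists of cliques) and makes the typicality reduction legitimate.
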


The reason we develop this more general version, even though 
we use Lemma \ref{lem:rolling_drc_general} only with $F = K_r$ and
$\mathcal{F}$ being the family of $\delta$-heavy copies of $K_r$ in this section,
is for the applications that will be given in the next section in which
we use different graphs $F$.
Note that Property (ii) gives a bound not depending on $\delta_2$. This is a crucial feature
since the lemma will be repeatedly used where the bound obtained in Property (ii)
will decide the value of $\delta_2$ in the next iteration. Thus if the bound in Property (ii)
depended on $\delta_2$, then it will rapidly shrink over iterations, and thus 
we will only be able to repeat this lemma for a few times.

In order to deduce Lemma~\ref{lem:rolling_drc_degenerate_g} from Lemma~\ref{lem:rolling_drc_general},
we will use the following variant of the well-known counting lemma.

\begin{lem} \label{lem:counting}
Let $r \ge 2$ be an integer and 
$\varepsilon, \delta, \delta'$ be positive real numbers satisfying 
$0< \varepsilon < \delta' \delta^{r}$ and $\delta \le \frac{1}{2}$.
Let $G$ be an $(\varepsilon, \delta)$-dense $r$-partite graph with vertex partition
$(V_i)_{i \in [r]}$ and let $K$ be a $\delta'$-heavy copy of $K_r$ across the partition.
Then there exists at least $(\delta')^{r} \delta^{r^2} \prod_{i=1}^{r} |V_i|$ copies of $K_r$ across the partition
that are $\delta^{r}$-heavy and adjacent to $K$.
\end{lem}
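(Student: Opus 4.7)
The plan is to define $U_j := N(\{v_i : i \ne j\}) \cap V_j$ for each $j \in [r]$, so that $|U_j| \ge \delta'|V_j|$ by the $\delta'$-heaviness of $K$. Since $\varepsilon < \delta'\delta^r \le \delta'$, each $|U_j|$ exceeds $\varepsilon|V_j|$, and so the $(\varepsilon,\delta)$-density condition passes to pairs of sufficiently large subsets of the $U_j$. A $K_r$-copy across $(V_i)$ is adjacent to $K$ exactly when its vertex in $V_j$ lies in $U_j$ for every $j$, so it suffices to count $\delta^r$-heavy $K_r$-copies with one vertex in each $U_j$.

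I would establish this count in two stages. The first stage is a ``few bad tuples'' estimate: for each $j \in [r]$ and each $k \in [r-1]$, the number of $k$-tuples $(u_{i_1}, \ldots, u_{i_k}) \in V_{i_1} \times \cdots \times V_{i_k}$ (over distinct $i_1, \ldots, i_k \in [r] \setminus \{j\}$) whose common neighborhood in $V_j$ has size less than $\delta^r|V_j|$ is at most $k\varepsilon \prod_l |V_{i_l}|$. This I would prove by induction on $k$: the base case $k=1$ follows by applying $(\varepsilon,\delta)$-density to the putative bad set, which would otherwise contradict $e(\mathrm{bad}, V_j) \ge \delta|\mathrm{bad}||V_j|$; the inductive step conditions on the first coordinate having relative degree at least $\delta^r$ in $V_j$ (which holds outside an $\varepsilon$-fraction, by the base case), passes to its neighborhood $N(u_{i_1}) \cap V_j$ (which is still of relative size at least $\delta^r > \varepsilon$, so the hereditary density applies), and invokes the inductive hypothesis on the shorter tuple within this restricted target.

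The second stage is a sequential greedy construction of $K_r$-copies. I would pick $u_1 \in U_1,\ u_2 \in U_2 \cap N(u_1),\ \ldots,\ u_r \in U_r \cap N(u_1, \ldots, u_{r-1})$ one at a time, at each step restricting to the subset of $U_i$ whose common neighborhoods with various subsets of $u_1, \ldots, u_{i-1}$ in the remaining $V_j$ ($j > i$) are large enough to allow the future steps; by the first-stage estimate each such restriction removes only an $\varepsilon$-fraction. Iterating the $(\varepsilon,\delta)$-density then lower-bounds the total number of valid tuples by roughly $\delta^{r^2}\prod |U_j|$. Subtracting the $K_r$-copies that fail $\delta^r$-heaviness (each of which has, for some $j$, an $(r-1)$-sub-tuple in the bad set of the first stage, and so at most $\delta^r|V_j|$ admissible choices for the last vertex), I would conclude the required lower bound of $(\delta')^r \delta^{r^2} \prod |V_i|$.

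The main obstacle will be the careful bookkeeping of how the factors of $\delta$ accumulate through the sequential construction, as $(\varepsilon,\delta)$-density (being strictly weaker than $\varepsilon$-regularity) does not furnish a clean counting lemma. The target exponent $\delta^{r^2}$ is just enough to absorb both the $\binom{r}{2}$ edges of the $K_r$ and the $r$ heaviness conditions, so the hypothesis $\varepsilon < \delta'\delta^r$ must be invoked at every level of the iteration to ensure that the sizes of the good sets and the bad-tuple contributions stay within budget.
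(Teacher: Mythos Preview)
Your setup (defining $U_j$ and noting that adjacency to $K$ is equivalent to landing in the $U_j$) matches the paper exactly. But your two-stage plan is more elaborate than necessary, and the first-stage induction has a gap as written.

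The problem is the threshold. You fix the bad-tuple threshold at $\delta^r|V_j|$ for every $k$, then in the inductive step you pass to the restricted target $Y := N(u_{i_1}) \cap V_j$ and ``invoke the inductive hypothesis on the shorter tuple within this restricted target.'' But the inductive hypothesis, as you stated it, bounds tuples whose common neighbourhood in $V_j$ is small, not in $Y$. What you actually need is that the $(k-1)$-tuple has common neighbourhood in $Y$ of size $< \delta^r|V_j|$. Since you only conditioned on $|Y| \ge \delta^r|V_j|$, the quantity $\delta^r|V_j|$ can be as large as $|Y|$ itself, so ``common neighbourhood in $Y$ is $< \delta^r|V_j|$'' is satisfied by essentially every tuple and the hypothesis gives you nothing. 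To make the induction close you would need the threshold to scale with the step (say $\delta^k|V_j|$ at level $k$, conditioning on $|Y| \ge \delta|V_j|$), and you would need to phrase the hypothesis for an arbitrary target $Y \subseteq V_j$ with $|Y| \ge \varepsilon|V_j|$.

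The paper avoids this entire detour. It runs a single greedy construction inside the sets $W_i := U_i$, maintaining at step $t$ the invariant that $W_i^{(t)} := W_i \cap \bigcap_{j \le t,\, j\ne i} N(w_j)$ has size at least $\delta^t|W_i|$ for every $i$. At each step, $(\varepsilon,\delta)$-density (applied to the pairs $(W_t^{(t-1)}, W_i^{(t-1)})$, each of which has size $\ge \delta^{t-1}\delta'|V_\cdot| \ge \varepsilon|V_\cdot|$) guarantees that all but a small fraction of vertices in $W_t^{(t-1)}$ have $\ge \delta|W_i^{(t-1)}|$ neighbours in each $W_i^{(t-1)}$, so the invariant propagates and there are at least $\tfrac12|W_t^{(t-1)}| \ge \delta'\delta^r|V_t|$ choices for $w_t$. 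The heaviness of the final clique is read off directly from the invariant at $t = r$; there is no separate bad-tuple count and no subtraction. This is the cleaner route, and if you repair your induction with a scaling threshold you will find you have essentially reinvented it.
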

\begin{proof}
Let $(v_i)_{i \in [r]}$ be the vertices of $K$ where $v_i \in V_i$ for all $i \in [r]$.
Define $W_i = V_i \cap \bigcap_{j \in [r] \setminus \{i\}} N(v_j)$. Since $K$ is $\delta'$-heavy,
we see that $|W_i| \ge \delta' |V_i|$ for all $i \in [r]$.

We will find copies of $K_r$ across the partition by choosing one vertex at a time.
More precisely for $t \in [r]$, at the $t$-th step we will choose a vertex $w_t \in W_t$ satisfying
the following properties:
\begin{itemize}
  \setlength{\itemsep}{1pt} \setlength{\parskip}{0pt}
  \setlength{\parsep}{0pt}
  \item[(i)] $w_t$ is adjacent to $w_1, \cdots, w_{t-1}$, 
  \item[(ii)] For $i \in [r]$, the set $W_i^{(t)} := W_i \cap \bigcap_{j \le t, j \neq i} N(w_j)$ has size at least  $\delta^{t}|W_i|$.
\end{itemize}
We will prove that there are at least $\delta' \delta^r|V_t|$ choices 
for the vertex $w_t$ satisfying the above at each step. Note that this implies the lemma.

Suppose that for some $t \ge 1$ we have already found $t-1$ vertices $w_1, \cdots, w_{t-1}$
satisfying the above. Consider the sets $W_i^{(t-1)} \subseteq W_i$ for $i \in [r]$ defined as above. 
Note that all vertices in $W_t^{(t-1)}$ are adjacent to the vertices $w_1, \cdots, w_{t-1}$.
Further note that $|W_i^{(t-1)}| \ge \delta^{t-1}|W_i| \ge \varepsilon|V_i|$ 
for all $i \in [r]$.
Since $G$ is $(\varepsilon,\delta)$-dense, there are at most
$\varepsilon|W_t^{(t-1)}|$ vertices in $W_t^{(t-1)}$ that have less than 
$\delta |W_i^{(t-1)}|$ neighbors in $W_i^{(t-1)}$.
Since $\varepsilon r < \frac{1}{2}$, at least $\frac{1}{2}|W_t^{(t-1)}|$ vertices in $W_t^{(t-1)}$ 
 have at least $\delta |W_i^{(t-1)}|$ neighbors in $W_i^{(t-1)}$ 
for all $i \in [r] \setminus \{t\}$.
All these vertices can be used
as the vertex $w_t$ and would satisfy the two properties listed above.
Since 
\[
	\frac{1}{2}|W_t^{(t-1)}|
	\ge \frac{1}{2} \delta^{t-1} |W_t|
	\ge \delta' \delta^{r} |V_t|,
\]
this concludes the proof.
\end{proof}

Lemma \ref{lem:rolling_drc_degenerate_g} straightforwardly follows from
Lemmas \ref{lem:rolling_drc_general} and \ref{lem:counting}.

\begin{proof}[Proof of Lemma \ref{lem:rolling_drc_degenerate_g}]
Define $\delta_1 = (\frac{\delta^2}{2})^{r^2}$ and $\delta_2 = \frac{1}{2} \left(\frac{\delta}{\log n}\right)^{2r^2(10s)^r}$.
Suppose that we are given a partition $(V_i)_{i \in [r]}$ and an $r$-tuple
of sets $(A_i)_{i \in [r]}$ satisfying $A_i \subseteq V_i$ for all $i \in [r]$.
Let $\mathcal{K}$ be the family of $\left(\frac{\delta}{2}\right)^{r}$-heavy
copies of $K_r$ across the partition $(V_i)_{i \in [r]}$.
The given condition implies $|\mathcal{K}(\mathcal{A})| \ge \delta_2 \prod_{i=1}^{r} |V_i|$.
Define $\mathcal{F}$ as the family of $\delta^{r}$-heavy 
copies of $K_r$ across the partition $(V_i)_{i \in [r]}$.
Since $\varepsilon \le \left(\frac{\delta}{2}\right)^{2r}$,
Lemma \ref{lem:counting} with $(\delta')_{\ref{lem:counting}} = (\frac{\delta}{2})^r$ 
implies $|\mathcal{F}| \ge \delta_1 \prod_{i=1}^{r} |V_i|$, and
that $\mathcal{K}$ is $\delta_1$-heavy with respect to $\mathcal{F}$.

Apply Lemma~\ref{lem:rolling_drc_general} with 
$(F_0)_{\ref{lem:rolling_drc_general}} = K_r$,
$(\mathcal{K})_{\ref{lem:rolling_drc_general}} = \mathcal{K}$,
$(\mathcal{F})_{\ref{lem:rolling_drc_general}} = \mathcal{F}$, and
$c_{\ref{lem:rolling_drc_general}} = r-1$
to obtain families $\mathcal{B}$ and $\mathcal{T}$ (the parameters are chosen
so the conditions of Lemma~\ref{lem:rolling_drc_general} is satisfied).
Then Properties (i), (iii) of Lemma~\ref{lem:rolling_drc_general} 
immediately implies Properties (i), (ii), (iv) of Lemma~\ref{lem:rolling_drc_degenerate_g}
(recall that if $X$ has negligible $(0,d,2\beta)$-potential in $Y$, then $X$ is $(d,2\beta)$-common
into $Y$).
Property (ii) of Lemma~\ref{lem:rolling_drc_general} implies
\begin{align*}
	|\mathcal{F}(\mathcal{B})| 
	\ge \left(\frac{\delta_1}{\log^2 n} \right)^{(10s)^{r}} |\mathcal{F}|
	\ge& \left(\frac{\delta_1}{\log^2 n} \right)^{(10s)^{r}} \delta_1 \prod_{i=1}^{r} |V_i| \\
	=& \left(\Big(\frac{\delta^2}{2}\Big)^{r^2} \frac{1}{\log^2 n} \right)^{(10s)^{r}} \Big(\frac{\delta^2}{2}\Big)^{r^2} \prod_{i=1}^{r} |V_i|
	\ge \Big(\frac{\delta}{\log n}\Big)^{2r^2(10s)^{r}} \prod_{i=1}^{r} |V_i|\,,
\end{align*}
which is Property (iii) of Lemma~\ref{lem:rolling_drc_degenerate_g}.
\end{proof}

\subsubsection{Proof of Lemma \ref{lem:rolling_drc_general}}

Let $G$ be an $(\varepsilon,\delta)$-dense $r$-partite graph with vertex partition $(V_i)_{i \in [r]}$.
Let $F_0$ be an $r$-partite graph and $\mathcal{F}$ be a family of copies of $F_0$ in $G$. 
Suppose that an $r$-tuple of sets $\mathcal{A} = (A_i)_{i \in [r]}$, and a family
$\mathcal{K}$ of copies of $K_r$ across $\mathcal{A}$ satisfying
$|\mathcal{K}| \ge \delta_2 \prod_{i \in [r]} |V_i|$ is given.
For families $\mathcal{X}$ and $\mathcal{Y}$ of subgraphs,
we define $\rho(\mathcal{X}, \mathcal{Y})$ as the number
of pairs $(X, Y) \in \mathcal{X} \times \mathcal{Y}$ for which $X$ and $Y$ are adjacent.

To prove the lemma, we will iteratively construct sets $T_i$ for $i \in [r]$.
In the $t$-th step of our process, we construct a set $T_t$ so that
the $t$-tuple $\mathcal{T}_t = (T_i)_{i \in [t]}$ has the following properties.
Define $T_{-i, t} = \bigcup_{j \in [t] \setminus \{i\}} T_j$ for all $i \in [r]$. 
Define $A_{i,t} = A_i \cap N(T_{-i, t})$ for $i \in [r]$ and
$\mathcal{A}_t = (A_{i,t})_{i \in [r]}$. 
There exists a non-empty family $\mathcal{K}_t$ of copies of $K_{r-t}$ across
$(A_{i,t})_{i = t+1}^{r}$ satisfying
\begin{itemize}
  \setlength{\itemsep}{1pt} \setlength{\parskip}{0pt}
  \setlength{\parsep}{0pt}
  \item[(a)] $T_i \subseteq N(T_{i'})$ for all distinct $i,i' \in [t]$,
  \item[(b)] $|\mathcal{K}_t| \ge \left(\frac{\delta_1 \delta_2}{\log^2 n}\right)^{(10s)^{t}} \prod_{i > t} |V_i|$,
\end{itemize}
for $B_{i,t} := V_i \cap N(T_{-i,t})$, $\mathcal{B}_t := (B_{i,t})_{i \in [r]}$ and $B_{-i,t} := \bigcup_{j \in [r] \setminus \{i\}} B_{j,t}$,
\begin{itemize}
  \setlength{\itemsep}{1pt} \setlength{\parskip}{0pt}
  \setlength{\parsep}{0pt}
  \item[(c)] $B_{-i,t}$ has $\lambda$-negligible $(cs + (r-t)s, d, 2\beta)$-potential in $B_{i,t}$ for all $i \le t$, and
  \item[(d)]  $\rho(\mathcal{K}_t, \mathcal{F}(\mathcal{B}_t)) \ge \left(\frac{\delta_1}{\log^2 n}\right)^{(10s)^{t}} |\mathcal{K}_t| |\mathcal{F}|$.
\end{itemize}
Furthermore, if $\mathcal{A}$ has $\lambda$-negligible $((r-1)s, d, \beta)$-potential, then
\begin{itemize}
  \setlength{\itemsep}{1pt} \setlength{\parskip}{0pt}
  \setlength{\parsep}{0pt}
  \item[(e)] $A_{-i}$ has $\lambda$-negligible $((r-t)s, d, \beta)$-potential in $A_{i,t}$ for $i \le t$, and
  $A_{-i}$ has $\lambda$-negligible $((r-t-1)s, d, \beta)$-potential in $A_{i,t}$ for $i > t$.
\end{itemize}
See Figure \ref{fig:fig_general}.

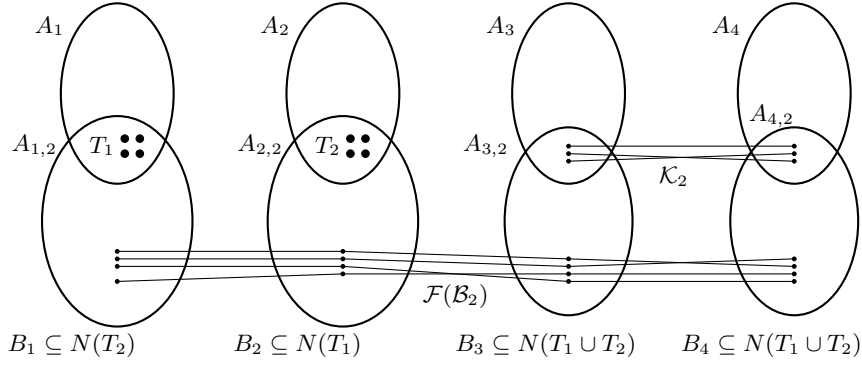
\begin{figure}[h]
  \centering
  \begin{tabular}{ccc}
    %%
%% General case
%%

\begin{tikzpicture}
  \clip (-1.5,-2.5) rectangle (12.5,3.3);

  \draw [thick] (1,1.5) ellipse (0.75cm and 1.2cm);
  \draw (0.1,2.4) node {\footnotesize{$A_1$}};

  \draw [thick] (1,-0.2) ellipse (1.0cm and 1.4cm);
  \draw (0.4,-1.85) node {\footnotesize{$B_1 \subseteq N(T_2)$}};

  \draw (-0.1,0.8) node {\footnotesize{$A_{1,2}$}};

  \draw [fill=black] (1.1,0.9) circle (0.5mm);
  \draw [fill=black] (1.1,0.7) circle (0.5mm);
  \draw [fill=black] (1.3,0.9) circle (0.5mm);
  \draw [fill=black] (1.3,0.7) circle (0.5mm);
  \draw (0.8,0.8) node {\footnotesize{$T_1$}};

  \draw [thick] (4,1.5) ellipse (0.75cm and 1.2cm);
  \draw (3.1,2.4) node {\footnotesize{$A_2$}};

  \draw [thick] (4,-0.2) ellipse (1.0cm and 1.4cm);
  \draw (3.4,-1.85) node {\footnotesize{$B_2 \subseteq N(T_1)$}};

  \draw (2.9,0.8) node {\footnotesize{$A_{2,2}$}};

  \draw [fill=black] (4.1,0.9) circle (0.5mm);
  \draw [fill=black] (4.1,0.7) circle (0.5mm);
  \draw [fill=black] (4.3,0.9) circle (0.5mm);
  \draw [fill=black] (4.3,0.7) circle (0.5mm);
  \draw (3.8,0.8) node {\footnotesize{$T_2$}};

  \draw [thick] (7,1.5) ellipse (0.75cm and 1.2cm);
  \draw (6.1,2.4) node {\footnotesize{$A_3$}};

  \draw [thick] (7,-0.2) ellipse (0.85cm and 1.25cm);
  \draw (6.7,-1.85) node {\footnotesize{$B_3 \subseteq N(T_1 \cup T_2)$}};

  \draw (5.9,0.8) node {\footnotesize{$A_{3,2}$}};

  \draw [thick] (10,1.5) ellipse (0.75cm and 1.2cm);
  \draw (9.1,2.4) node {\footnotesize{$A_4$}};

  \draw [thick] (10,-0.2) ellipse (0.85cm and 1.25cm);
  \draw (9.7,-1.85) node {\footnotesize{$B_4 \subseteq N(T_1 \cup T_2)$}};

  \draw (9.7,1.2) node {\footnotesize{$A_{4,2}$}};

  \draw [fill=black] (7,0.8) circle (0.25mm);
  \draw [fill=black] (10,0.8) circle (0.25mm);
  \draw (7,0.8) -- (10,0.8);

  \draw [fill=black] (7,0.7) circle (0.25mm);
  \draw [fill=black] (10,0.6) circle (0.25mm);
  \draw (7,0.7) -- (10,0.6);

  \draw [fill=black] (7,0.6) circle (0.25mm);
  \draw [fill=black] (10,0.7) circle (0.25mm);
  \draw (7,0.6) -- (10,0.7);

  \draw (8.4,0.4) node {\footnotesize{$\mathcal{K}_2$}};  

  \draw [fill=black] (1,-0.7) circle (0.25mm);
  \draw [fill=black] (4,-0.7) circle (0.25mm);
  \draw [fill=black] (7,-0.7) circle (0.25mm);
  \draw [fill=black] (10,-0.7) circle (0.25mm);
  \draw (1,-0.7) -- (4,-0.7) -- (7,-0.8) -- (10,-0.7);

  \draw [fill=black] (1,-0.6) circle (0.25mm);
  \draw [fill=black] (4,-0.6) circle (0.25mm);
  \draw [fill=black] (7,-0.8) circle (0.25mm);
  \draw [fill=black] (10,-0.8) circle (0.25mm);
  \draw (1,-0.6) -- (4,-0.6) -- (7,-0.7) -- (10,-0.8);

  \draw [fill=black] (1,-0.8) circle (0.25mm);
  \draw [fill=black] (4,-0.8) circle (0.25mm);
  \draw [fill=black] (7,-1.0) circle (0.25mm);
  \draw [fill=black] (10,-1.0) circle (0.25mm);
  \draw (1,-0.8) -- (4,-0.8) -- (7,-1.0) -- (10,-1.0);

  \draw [fill=black] (1,-1.0) circle (0.25mm);
  \draw [fill=black] (4,-0.9) circle (0.25mm);
  \draw [fill=black] (7,-0.9) circle (0.25mm);
  \draw [fill=black] (10,-0.9) circle (0.25mm);
  \draw (1,-1.0) -- (4,-0.9) -- (7,-0.9) -- (10,-0.9);

  \draw (5.5,-1.2) node {\footnotesize{$\mathcal{F}(\mathcal{B}_2)$}};

%  \draw [fill=black] (5,1.8) circle (0.5mm);
%  \draw (5.2,1.6) node {\footnotesize{$v_1$}};

%  \draw [thick] (5,0.05) ellipse (0.6cm and 1.2cm);
%  \draw [<-] (5.5,-0.8) -- (6.1,-1);
%  \draw (6.1,-1) node [right] {\footnotesize{$N^{(2)}(v_1)$}};

%  \draw [thick] (1,0.05) ellipse (0.5cm and 0.9cm);
%  \draw (0.4,0.9) node {\footnotesize{$B_X$}};

%  \draw [thick] (1,0.3) ellipse (0.25cm and 0.5cm);
%  \draw [<-] (0.95,0.85) -- (0.8,1.5);
%  \draw (0.8,1.5) node [above,xshift=2mm] {\footnotesize{$N(v_1) \cap B_X$}};

%  \draw (5,1.8) -- (1.0611,-0.1848);
%  \draw (5,1.8) -- (0.9691, 0.7962);

%  \draw (0.9704,-0.1965) -- (4.9289,-1.1415);
%  \draw (0.9859,0.7992) -- (4.9662,1.2481);
\end{tikzpicture}
  \end{tabular}
  \caption{The families $\mathcal{A}$, $\mathcal{A}_t$, $\mathcal{T}_t$, $\mathcal{B}_t$,  $\mathcal{K}_t$, and $\mathcal{F}(\mathcal{B}_t)$ for $t=2$ and $r=4$. }
\label{fig:fig_general}
\end{figure}

We first verify that for $t = r$, an $r$-tuple $\mathcal{T}_r$ satisfying the
above listed properties satisfies the claimed properties of the lemma.
Note that Property (ii) of Lemma \ref{lem:rolling_drc_general} follows from (d) 
since $|\mathcal{K}_r| = 1$ (since $\mathcal{K}_r$ is non-empty, it consists of 
the unique copy of the empty graph).
Property (i) follows from (c) and Proposition~\ref{prop:potential_monotone}. 
Moreover, if $\mathcal{A}$ has $\lambda$-negligible $((r-1)s, d, \beta)$-potential, then
Property (iii) follows from (e).

For the initial case $t=0$, Properties (a), (c), and (e) are trivially true.
Furthermore for $\mathcal{K}_0 := \mathcal{K}$, Property (b) follows from the given 
bound on $|\mathcal{K}|$ and Property (d) follows from the fact that 
$\mathcal{K}$ is $\delta_1$-heavy with respect to $\mathcal{F}$
since $\mathcal{B}_0 = (V_i)_{i \in [r]}$.

Now suppose that we have successfully completed the $t$-the step to construct
a $t$-tuple of sets $\mathcal{T}_t = (T_i)_{i \in [t]}$ for some $t \le r-1$.
Consider an auxiliary bipartite graph $\Gamma$ whose vertex 
set is $\mathcal{K}_t \cup \mathcal{F}(\mathcal{B}_t)$
where a pair $(K, F)$ with $K \in \mathcal{K}_t$ and $F \in \mathcal{F}(\mathcal{B}_t)$ forms an edge
if they are adjacent, i.e., if the $i$-th part of $K$ and the $i'$-th part of $F$ forms a complete bipartite
graph for each pair of distinct indices $i, i' \in [r]$.
Property (d) implies that the number of edges of $\Gamma$ is at least
$\left(\frac{\delta_1}{\log^2 n}\right)^{10s^{t}}|\mathcal{K}_t||\mathcal{F}|$.
Hence there are at least 
$\frac{1}{2}\left(\frac{\delta_1}{\log^2 n}\right)^{10s^{t}}|\mathcal{K}_t||\mathcal{F}|$ edges
incident to vertices $K \in \mathcal{K}_t$ of degree at least
$\frac{1}{2} \left(\frac{\delta_1}{\log^2 n}\right)^{10s^{t}}|\mathcal{F}|$ in $\Gamma$.
For $i \ge 0$, define $\mathcal{K}_t^{i}$ as the family of graphs $K \in \mathcal{K}_t$
with degree at least $2^{i-1} \left(\frac{\delta_1}{\log^2 n}\right)^{10s^{t}}|\mathcal{F}|$ 
and less than $2^{i} \left(\frac{\delta_1}{\log^2 n}\right)^{10s^{t}}|\mathcal{F}|$ 
in $\Gamma$.
Since $|\mathcal{F}| \le n^{f}$, there are at most $f\log n$ 
non-empty families $\mathcal{K}_t^i$.
In particular, there exists an index $i_0$ for which the subgraph of $\Gamma$
induced on $\mathcal{K}_t^{i_0} \cup \mathcal{F}$ contains at least
$\frac{1}{2f \log n} \left(\frac{\delta_1}{\log ^2 n}\right)^{(10s)^{t}}|\mathcal{F}|
\ge \left(\frac{\delta_1}{\log ^2 n}\right)^{2(10s)^{t}}|\mathcal{F}|$ edges.
Define $\alpha := 2^{i_0}$ and $\mathcal{K}_t' := \mathcal{K}_t^{i_0}$.
Thus
\begin{align} \label{eq:alpha}
	\alpha |\mathcal{K}_t'|
	\ge \left(\frac{\delta_1}{\log^2 n}\right)^{2(10s)^{t}}|\mathcal{K}_t||\mathcal{F}|.	
\end{align}
Moreover, since $\alpha \le |\mathcal{F}|$, it follows that
\begin{align} \label{eq:alpha2}
	|\mathcal{K}_t'|
	\ge \left(\frac{\delta_1}{\log^2 n}\right)^{2(10s)^{t}}|\mathcal{K}_t|
	\ge  \left(\frac{\delta_1}{\log^2 n}\right)^{2(10s)^{t}} \cdot \left(\frac{\delta_1 \delta_2}{\log^2 n}\right)^{(10s)^{t}} \prod_{i > t} |V_i|,
\end{align}
where the second inequality follows from Property (b).

Let $\hat{T}_{t+1}$ be a $s$-tuple of vertices in $A_{t+1,t}$ chosen 
uniformly and independently at random. 
Define $\hat{\mathcal{T}}_{t+1}$ as the $(t+1)$-tuple of sets obtained from 
$\mathcal{T}_{t}$ by adding the set $\hat{T}_{t+1}$.
Define $\hat{\mathcal{A}}_{t+1} = (\hat{A}_{i,t+1})_{i \in [r]}$ and 
$\hat{\mathcal{B}}_{t+1} = (\hat{B}_{i,t+1})_{i \in [r]}$ as above using $\hat{\mathcal{T}}_{t+1}$.
Define $\hat{\mathcal{K}}_{t+1}$ as the copies $K$ of $K_{r-t-1}$ for which $V(K) \cup \{x\}$ forms a 
copy of $K_{r-t}$ in $\mathcal{K}_t'$ for every $x \in \hat{T}_{t+1}$.

As in the previous section, we will consider $(p,d,\beta)$-potentials for fixed $d$ and two 
different values of $\beta$.
Hence for simplicity, we slightly abuse notation, and for sets $X, Y$ and an integer $p$, 
denote the $(p,d,\beta)$-potential of $X$ in $Y$ as $\xi_p(X,Y)$ and
the $(p,d,2\beta)$-potential of $X$ in $Y$ as $\eta_p(X,Y)$.
Define
\[
	\xi(\hat{\mathcal{A}}_{t+1})
	:= \sum_{i \le t} \frac{\xi_{(r-t-1)s}(A_{-i}, \hat{A}_{i,t+1})}{\xi_{(r-t)s}(A_{-i}, A_{i,t}) }
	+ \sum_{i \ge t+2} \frac{\xi_{(r-t-2)s}(A_{-i}, \hat{A}_{i,t+1})}{\xi_{(r-t-1)s} (A_{-i}, A_{i,t})},
\]
and
\begin{align*}
	\eta(\hat{\mathcal{B}}_{t+1})
	:=&\, 
	\frac{\eta_{cs + (r-t-1)s}(\hat{B}_{-(t+1),t+1}, B_{t+1,t})}{\lambda^{cs+(r-t)s-1}}
	\,+ \sum_{i \le t} \frac{\eta_{cs+(r-t-1)s}(B_{-i,t}, \hat{B}_{i,t+1})}{\eta_{cs+(r-t)s}(B_{-i,t}, B_{i,t})}.
\end{align*}
The formulas are designed so that $\xi(\hat{\mathcal{A}}_{t+1}) < \lambda^{-s}$ implies Property (e), 
and $\eta(\hat{\mathcal{B}}_{t+1}) < \lambda^{-s}$ implies Property (c).
Define
\[
	\mu := 
	\BBE\left[ \rho\left(\hat{\mathcal{K}}_{t+1}, \mathcal{F}(\hat{\mathcal{B}}_{t+1}) \right)
- \left(\frac{\delta_1}{\log^2 n}\right)^{(10s)^{t+1}} |\hat{\mathcal{K}}_{t+1}||\mathcal{F}|
- \lambda^s |\mathcal{K}||\mathcal{F}|\left(\xi(\hat{\mathcal{A}}_{t+1}) + \eta(\hat{\mathcal{B}}_{t+1})\right) \right],
\]
and let $T_{t+1}$ be a particular choice of $\hat{T}_{t+1}$ for which the random variable on the right-hand side becomes at least its expected value. 
Similarly, define the non-hat versions of parameters such as $\mathcal{A}_{t+1}$ 
as the family $\hat{\mathcal{A}}_{t+1}$ for this particular choice of $\hat{T}_{t+1}$.
Property (a) immediately follows from our choice.
The other properties can be verified using the following claim.

\begin{claim} \label{clm:drclaim_general}
$\mu \ge \left(\frac{\delta_1 \delta_2}{\log^2 n}\right)^{(10s)^{t+1}} \left(\prod_{i > t} |V_i|\right) |\mathcal{F}|$.
\end{claim}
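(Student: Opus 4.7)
The plan is to mirror the proof of Claim~\ref{clm:claim0}: split $\mu$ into its three summands---the main positive term $\BBE[\rho(\hat{\mathcal{K}}_{t+1}, \mathcal{F}(\hat{\mathcal{B}}_{t+1}))]$, the intermediate subtracted term $(\delta_1/\log^2 n)^{(10s)^{t+1}}|\mathcal{F}|\,\BBE[|\hat{\mathcal{K}}_{t+1}|]$, and the potential corrections $\lambda^s|\mathcal{K}||\mathcal{F}|\,\BBE[\xi(\hat{\mathcal{A}}_{t+1})+\eta(\hat{\mathcal{B}}_{t+1})]$---and bound each via linearity of expectation, Jensen's inequality, and Proposition~\ref{prop:negligible_potential}.

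For the main term, linearity yields
\[
\BBE\bigl[\rho(\hat{\mathcal{K}}_{t+1}, \mathcal{F}(\hat{\mathcal{B}}_{t+1}))\bigr]
= \sum_{(K,F)} \left(\frac{|X_K \cap Y_F|}{|A_{t+1,t}|}\right)^s,
\]
where $(K, F)$ ranges over pairs with $K$ a copy of $K_{r-t-1}$ across $(A_{i,t})_{i > t+1}$, $F \in \mathcal{F}(\mathcal{B}_t)$ adjacent to $K$, $X_K := \{x \in A_{t+1,t} : K \cup \{x\} \in \mathcal{K}_t'\}$, and $Y_F := A_{t+1,t} \cap \bigcap_{v} N(v)$ with $v$ ranging over the vertices of $F$ outside its $(t+1)$-th part. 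The key identity, mirroring the double-count used in Claim~\ref{clm:claim0}, is that for fixed $K$,
\[
\sum_F |X_K \cap Y_F| = \sum_{x \in X_K} \deg_\Gamma(K \cup \{x\}) \ge |X_K| \cdot \frac{\alpha}{2}\left(\frac{\delta_1}{\log^2 n}\right)^{(10s)^t}|\mathcal{F}|,
\]
since each $K \cup \{x\}$ with $x \in X_K$ lies in $\mathcal{K}_t'$ and thus has $\Gamma$-degree at least $\frac{\alpha}{2}(\delta_1/\log^2 n)^{(10s)^t}|\mathcal{F}|$. Jensen's inequality (convexity of $x \mapsto x^s$) over $F$, together with the trivial upper bound $N_K \le |\mathcal{F}|$ on the number of $F \in \mathcal{F}(\mathcal{B}_t)$ adjacent to $K$, then gives
\[
\BBE\bigl[\rho(\hat{\mathcal{K}}_{t+1}, \mathcal{F}(\hat{\mathcal{B}}_{t+1}))\bigr] \ge \left(\frac{\alpha}{2}\right)^s \left(\frac{\delta_1}{\log^2 n}\right)^{s(10s)^t} |\mathcal{F}| \cdot \BBE[|\hat{\mathcal{K}}_{t+1}|].
\]

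Since $s(10s)^t = (10s)^{t+1}/10$ and $\alpha \ge 1$, the coefficient $(\alpha/2)^s(\delta_1/\log^2 n)^{s(10s)^t}$ exceeds $(\delta_1/\log^2 n)^{(10s)^{t+1}}$ by a factor of $(\log^2 n/\delta_1)^{9(10s)^{t+1}/10}$, so after subtracting the intermediate term at least half of the original main term remains. A second Jensen step applied to $\BBE[|\hat{\mathcal{K}}_{t+1}|] = \sum_K (|X_K|/|A_{t+1,t}|)^s$, using $\sum_K |X_K| = |\mathcal{K}_t'|$, the bound \eqref{eq:alpha2}, Property (b), and $|A_{t+1,t}| \le |V_{t+1}|$, produces $\BBE[|\hat{\mathcal{K}}_{t+1}|] \ge (\delta_1/\log^2 n)^{3s(10s)^t}\delta_2^{s(10s)^t}\prod_{i > t+1}|V_i|$. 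Multiplying these two lower bounds yields a contribution of shape $(\delta_1/\log^2 n)^{4s(10s)^t}\delta_2^{s(10s)^t}|\mathcal{F}|\prod_{i > t+1}|V_i|$, which beats the target $(\delta_1\delta_2/\log^2 n)^{(10s)^{t+1}}\prod_{i > t}|V_i||\mathcal{F}|$ by a margin of $(1/\delta_2)^{9(10s)^{t+1}/10}$---astronomically larger than the single factor $|V_{t+1}| \le n$ needed.

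Finally, the potential sums $\xi(\hat{\mathcal{A}}_{t+1})$ and $\eta(\hat{\mathcal{B}}_{t+1})$ are engineered precisely so that every ratio summand lines up with Proposition~\ref{prop:negligible_potential}: parts (ii) and (i) bound each expected ratio by $(1/|A_{t+1,t}|)^s$ or by a power of $(2\beta/|A_{t+1,t}|)$, respectively. Together with the hypotheses $\beta/n \le (\lambda/n)^{2r}$ and $\lambda/n \le (n_0/n)^2 \cdot (\delta_1\delta_2/\log^2 n)^{\mathrm{poly}(r,s)}$, this makes $\lambda^s|\mathcal{K}||\mathcal{F}|\BBE[\xi+\eta]$ negligible compared to the target. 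The main obstacle will be the dense bookkeeping of the exponents $(10s)^t$ and the careful alignment of denominators in $\xi, \eta$ with the index shifts delivered by Proposition~\ref{prop:negligible_potential}---a more elaborate incarnation of the calibration already present in Claim~\ref{clm:claim0}, now with $r-1$ layers of dependent random choice instead of one.
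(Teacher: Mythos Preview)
Your proposal is correct and follows essentially the same approach as the paper: both express $\BBE[\rho(\hat{\mathcal{K}}_{t+1},\mathcal{F}(\hat{\mathcal{B}}_{t+1}))]$ and $\BBE[|\hat{\mathcal{K}}_{t+1}|]$ via linearity and Jensen using the dyadic bucket $\mathcal{K}_t'$ with its degree parameter $\alpha$, and then bound $\BBE[\xi+\eta]$ term-by-term through Proposition~\ref{prop:negligible_potential}. The only cosmetic difference is that the paper applies Jensen over all $F\in\mathcal{F}_t$ (setting $d(K,F)=0$ for non-adjacent $F$) whereas you restrict to adjacent $F$ and use $N_K\le|\mathcal{F}|$; both routes give the same bound. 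One remark: the paper's own proof actually concludes with $\prod_{i>t+1}|V_i|$ rather than the $\prod_{i>t}|V_i|$ in the claim statement (evidently a typo, since only $\prod_{i>t+1}$ is needed for Property~(b) at step $t+1$), so your attempt to absorb the extra factor $|V_{t+1}|$ via the $\delta_2$-margin is unnecessary---and that particular margin would in fact not suffice when $\delta_2$ is bounded away from $0$ and $t$ is small.
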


Given this claim, we see that
\[
	\rho\left(\mathcal{K}_{t+1}, \mathcal{F}(\mathcal{B}_{t+1}) \right)
	- \left(\frac{\delta_1}{\log^2 n}\right)^{(10s)^{t+1}} |\mathcal{K}_{t+1}| |\mathcal{F}|
	\ge 0,
\]
from which Property (d) follows.
Also since $\rho\left(\mathcal{K}_{t+1}, \mathcal{F}(\mathcal{B}_{t+1}) \right)  \le |\mathcal{K}_{t+1}| |\mathcal{F}|$, we see that
\[
	|\mathcal{K}_{t+1}| |\mathcal{F}|
	\ge \mu
	\ge \left(\frac{\delta_1 \delta_2}{\log^2 n}\right)^{(10s)^{t + 1}} \left(\prod_{i > t} |V_i|\right) |\mathcal{F}|,
\]
from which Property (b) follows.

Furthermore, since $\mu > 0$, we see that
\[
	\lambda^s |\mathcal{K}||\mathcal{F}| \cdot \left(\xi(\mathcal{A}_{t+1}) + \eta(\mathcal{B}_{t+1})\right)
	< \rho\left(\mathcal{K}, \mathcal{F}(\mathcal{B}_{t+1}) \right)
	\le |\mathcal{K}||\mathcal{F}|,
\]
from which it follows that $\xi(\mathcal{A}_{t+1}) + \eta(\mathcal{B}_{t+1}) < \lambda^{-s}$.
Since $\eta(\mathcal{B}_{t+1}) < \lambda^{-s}$, it immediately follows that
$\eta_{cs+(r-t-1)s}(B_{-(t+1),t+1}, B_{t+1,t}) < \lambda^{cs+(r-t-1)s-1}$, thus verifying
Property (c) for the $(t+1)$-th step for $i= t+1$ since $B_{t+1,t+1} = B_{t+1,t}$.
For $i \le t$, we see that
\[
	\eta_{cs + (r-t-1)s}(B_{-i,t}, B_{i,t+1}) 
	< \lambda^{-s} \eta_{cs + (r-t)s}(B_{-i,t}, B_{i,t})
	< \lambda^{-s} \lambda^{cs + (r-t)s - 1} = \lambda^{cs + (r-t-1)s - 1},
\]
where the second inequality follows from Property (c) for the $t$-th step.
Since $B_{-i,t+1} \subseteq B_{-i,t}$ it follows that
$\eta_{cs + (r-t-1)s}(B_{-i,t+1}, B_{i,t+1}) \le \eta_{cs + (r-t-1)s}(B_{-i,t}, B_{i,t+1})$
and thus Property (c) for the $(t+1)$-th step holds. Property (e) follows similarly.

\begin{proof}[Proof of Claim~\ref{clm:drclaim_general}]
In order to compute $\mu$, we start with computing $\BBE[|\hat{\mathcal{K}}_{t+1}|]$.
We then compute 
$\BBE\left[\rho\left(\hat{\mathcal{K}}_{t+1}, \mathcal{F}(\hat{\mathcal{B}}_{t+1}) \right)\right]$ 
in terms of $\BBE[|\hat{\mathcal{K}}_{t+1}|]$, 
and finish by computing $\BBE\left[\xi(\hat{\mathcal{A}}_{t+1}) + \eta(\hat{\mathcal{B}}_{t+1}) \right]$.

Let $\mathcal{L}$ be the family of copies of $K_{r-t-1}$ across $(A_{i,t})_{i \in [t+2,r]}$.
By linearity of expectation,
\begin{align} \label{eq:kt1_1}
	\BBE\left[ |\hat{\mathcal{K}}_{t+1}| \right]
	= \sum_{K \in \mathcal{L}} \BFP( K \in \hat{\mathcal{K}}_{t+1} )
	= \sum_{K \in \mathcal{L}} \left(\frac{d(K)}{|A_{t+1,t}|} \right)^{s},
\end{align}
where $d(K)$ denotes the number of vertices $a \in A_{t+1, t}$ which together with $K$
forms a copy of $K_{r-t}$ in $\mathcal{K}_t'$. Then by convexity,
$\BBE\left[ |\hat{\mathcal{K}}_{t+1}| \right]
	\ge |\mathcal{L}| \left(\frac{1}{|\mathcal{L}|} \sum_{K \in \mathcal{L}} \frac{d(K)}{|A_{t+1,t}|} \right)^{s}$.
Hence by $\sum_{K \in \mathcal{L}}d(K) = |\mathcal{K}_{t}'|$ and \eqref{eq:alpha2}, 
\begin{align} \label{eq:kt1_2}
	\BBE\left[ |\hat{\mathcal{K}}_{t+1}| \right]
	\ge&\, |\mathcal{L}| \left( \left(\frac{\delta_1}{\log^2 n}\right)^{2(10s)^t}
	\left(\frac{\delta_1\delta_2}{\log^2 n}\right)^{(10s)^t} \right)^{s}
	\ge |\mathcal{L}| \left(\frac{\delta_1\delta_2}{\log^2 n}\right)^{3s \cdot (10s)^{t}}.
\end{align}

Next step is to compute  $\BBE\left[\rho\left(\hat{\mathcal{K}}_{t+1}, \mathcal{F}(\hat{\mathcal{B}}_{t+1}) \right)\right]$. 
For simplicity, denote $\mathcal{F}_t = \mathcal{F}(\mathcal{B}_t)$.
For $K \in \mathcal{L}$ and $F \in \mathcal{F}_t$, define $d(K,F)$ as the number of vertices $a \in A_{t+1,t}$
for which $\{a\} \cup V(K)$ forms a copy of $K_{r-t} \in \mathcal{K}_t'$ that is adjacent to $F$.
By linearity of expectation and convexity,
\[
	 \BBE\left[\rho\left(\hat{\mathcal{K}}_{t+1}, \mathcal{F}_t(\hat{\mathcal{B}}_{t+1}) \right) \right]
	 = \sum_{K \in \mathcal{L}} \sum_{F \in \mathcal{F}_t} \left(\frac{d(K,F)}{|A_{t+1,t}|}\right)^{s}
	 \ge \sum_{K \in \mathcal{L}} |\mathcal{F}_t| \left(\frac{1}{|\mathcal{F}_t|}\sum_{F \in \mathcal{F}_t} \frac{d(K,F)}{|A_{t+1,t}|}\right)^{s}.
\]
By the definition of $\mathcal{K}_t'$ and $\alpha$, for each fixed $K \in \mathcal{L}$ we have
$\sum_{F \in \mathcal{F}_t} d(K,F) \ge \frac{1}{2}\alpha \cdot d(K)$.
Hence
\[
	 \BBE\left[ \rho\left(\hat{\mathcal{K}}_{t+1}, \mathcal{F}(\hat{\mathcal{B}}_{t+1}) \right) \right]
	 \ge \sum_{K \in \mathcal{L}} |\mathcal{F}_t| \left(\frac{\alpha d(K)}{2|\mathcal{F}_t| |A_{t+1,t}|}\right)^{s}
	 =\frac{1}{2^s}\left(\frac{\alpha^s}{|\mathcal{F}_t|^{s-1}}\right) \sum_{K \in \mathcal{L}}  \left(\frac{d(K)}{|A_{t+1,t}|}\right)^{s},
\]
which by \eqref{eq:kt1_1} and \eqref{eq:alpha} gives
\begin{align*}
	\BBE\left[\rho\left(\hat{\mathcal{K}}_{t+1}, \mathcal{F}(\hat{\mathcal{B}}_{t+1}) \right) \right]
	\ge&\, \left(\frac{\alpha^s}{2^s |\mathcal{F}_t|^{s-1}}\right) \BBE\left[ |\hat{\mathcal{K}}_{t+1}| \right]
	\ge \BBE\left[ |\hat{\mathcal{K}}_{t+1}| \right] \cdot 
	\frac{1}{2^s}\left(\frac{\delta_1}{\log^2 n}\right)^{2s(10s)^{t}} |\mathcal{F}_t|.
\end{align*}
Since $\rho(\mathcal{K}_t, \mathcal{F}_t) \le |\mathcal{K}_t| |\mathcal{F}_t|$, 
Property (d) implies $|\mathcal{F}_t| \ge \left(\frac{\delta_1}{\log^2 n}\right)^{(10s)^{t}}|\mathcal{F}|$, and thus
\begin{align*}
	\BBE\left[\rho\left(\hat{\mathcal{K}}_{t+1}, \mathcal{F}(\hat{\mathcal{B}}_{t+1}) \right) \right]
	\ge&\, \BBE\left[ |\hat{\mathcal{K}}_{t+1}| \right] \cdot 
	\frac{1}{2^s} \left(\frac{\delta_1}{\log^2 n}\right)^{2s(10s)^{t}} \left(\frac{\delta_1}{\log^2 n}\right)^{(10s)^{t}} |\mathcal{F}|.	
\end{align*}
Therefore
\begin{align*}
	\BBE\left[ \rho\left(\hat{\mathcal{K}}_{t+1}, \mathcal{F}(\hat{\mathcal{B}}_{t+1}) \right) 
	 - \left(\frac{\delta_1}{\log^2 n}\right)^{(10s)^{t+1}} |\hat{\mathcal{K}}_{t+1}||\mathcal{F}| \right]
	\ge&\, \BBE\left[ \left(\frac{\delta_1}{\log^2 n}\right)^{3s(10s)^{t}} |\hat{\mathcal{K}}_{t+1}||\mathcal{F}|\right] \\
	\ge &\, \left(\frac{\delta_1\delta_2}{\log^2 n}\right)^{6s (10s)^{t}} |\mathcal{L}||\mathcal{F}|,
\end{align*}
where the second inequality follows from \eqref{eq:kt1_2}.
From \eqref{eq:alpha2}, 
$|\mathcal{L}| \ge \frac{1}{|V_{t+1}|} |\mathcal{K}_t'| \ge \left(\frac{\delta_1\delta_2}{\log^2 n}\right)^{3 (10s)^{t}} \prod_{i > t+1} |V_i|$,
and hence
\begin{align} \label{eq:almost_final}
	\BBE\left[ \rho\left(\hat{\mathcal{K}}_{t+1}, \mathcal{F}(\hat{\mathcal{B}}_{t+1}) \right) 
	 - \left(\frac{\delta_1}{\log^2 n}\right)^{(10s)^{t+1}} |\hat{\mathcal{K}}_{t+1}||\mathcal{F}| \right]
	\ge&\, \left(\frac{\delta_1\delta_2}{\log^2 n}\right)^{9s (10s)^{t}} |\mathcal{F}| \prod_{i > t+1} |V_i|.
\end{align}

To compute $\BBE\left[\xi(\hat{\mathcal{A}}_{t+1}) + \eta(\hat{\mathcal{B}}_{t+1}) \right]$, we compute
the expectation of each summand of $\xi(\hat{\mathcal{A}}_{t+1})$
and $\eta(\hat{\mathcal{B}}_{t+1})$ using Proposition~\ref{prop:negligible_potential}.
For instance for all $i \le t$, Proposition~\ref{prop:negligible_potential}~(ii) implies that
\[
	\BBE\left[\frac{\eta_{cs + (r-t-1)s}(B_{-i,t}, \hat{B}_{i,t+1})}{\eta_{cs + (r-t)s}(B_{-i,t}, B_{i,t})}\right]
	\le \frac{1}{|A_{t+1,t}|^s}.
\]
Since $|\mathcal{K}_t| \le \prod_{i = t+1}^{r} |A_{i,t}|$, Property~(b) 
implies $|A_{t+1,t}| \ge \left(\frac{\delta_1 \delta_2}{\log^2 n}\right)^{(10s)^t}|V_{t+1}| \ge \sqrt{\lambda n}$.
Thus 
\[
	\BBE\left[\frac{\eta_{(r-t-1)s}(B_{-i,t}, \hat{B}_{i,t+1})}{\eta_{(r-t)s}(B_{-i,t}, B_{i,t})}\right]
	\le \frac{1}{(\lambda n)^{s/2}}
	< \frac{1}{2r n^{r+f} \lambda^s}.
\]
One can similarly bound the other terms using Proposition~\ref{prop:negligible_potential}~(ii) 
except for the term
\begin{align} \label{eq:exception}
	\BBE\left[\frac{\eta_{cs + (r-t-1)s}(\hat{B}_{-(t+1),t+1}, B_{t+1,t})}{\lambda^{cs+(r-t)s-1}}\right].
\end{align}
Let $s_t = cs + (r-t-1)s$. For this term we use Proposition~\ref{prop:negligible_potential}~(i) to obtain
\begin{align*}
	\BBE\left[\eta_{s_t}(\hat{B}_{-(t+1),t+1}, B_{t+1,t}) \right]
	\le&\, \eta_{s_t}(B_{-(t+1),t+1}, B_{t+1,t}) \cdot \left(\frac{2\beta}{|A_{t+1,t}|}\right)^{s} 
	\le\, n^{d + s_t}\left(\frac{2\beta}{|A_{t+1,t}|}\right)^{s},
\end{align*}
where the second inequality follows from the trivial bound that $\eta_{s_t}(X,Y)$ is 
at most the number of $(s_t+d)$-tuples in $X$.
Since $\frac{\beta}{n} \le \left(\frac{\lambda}{n}\right)^{c+r+1}$
and $|A_{t+1,t}| \ge 2\lambda$, the above gives
\begin{align*}
	\BBE\left[\eta_{s_t}(\hat{B}_{-(t+1),t+1}, B_{t+1,t}) \right]
	\le&\, n^{d + s_t }\left( \frac{\lambda}{n}\right)^{(c+r)s}
	\le \frac{1}{2r n^{r+f}} \cdot \lambda^{s_t - 1},
\end{align*}
where the second inequality holds since it is equivalent to
$ (\frac{n}{\lambda})^{(c+r)s  - s_t} \ge 2r \lambda n^{d+r+f}$, which holds
since $s_t \le (c+r-1)s$ and $(\frac{n}{\lambda})^{s} \ge 2r n^{d+r+f+1}$.
Therefore
\[
	\BBE\left[\frac{\eta_{s_t}(\hat{B}_{-(t+1),t+1}, B_{t+1,t})}{\lambda^{s_t+s-1}}\right] \le \frac{1}{2rn^{r+f}\lambda^s}.
\] 
Hence
\[
	\BBE\left[\xi(\hat{\mathcal{A}}_{t+1}) + \eta(\hat{\mathcal{B}}_{t+1}) \right] 
	\le 2r \cdot \frac{1}{2rn^{r+f}\lambda^s} \le \frac{1}{n^{r+f} \lambda^s}
\]
Since $|\mathcal{K}| \le n^r$ and $|\mathcal{F}| \le n^f$, it follows from \eqref{eq:almost_final}
that
\begin{align*}
	&\, \BBE\left[ \rho\left(\hat{\mathcal{K}}_{t+1}, \mathcal{F}(\hat{\mathcal{B}}_{t+1}) \right)
- \left(\frac{\delta_1}{\log^2 n}\right)^{(10s)^{t+1}} |\hat{\mathcal{K}}_{t+1}||\mathcal{F}|
- \lambda^s |\mathcal{K}||\mathcal{F}|\left(\xi(\hat{\mathcal{A}}_{t+1}) + \eta(\hat{\mathcal{B}}_{t+1})\right) \right] \\
	\ge &\, \left(\frac{\delta_1\delta_2}{\log^2 n}\right)^{(10s)^{t+1}} |\mathcal{F}| \prod_{i > t+1} |V_i|. \qedhere
\end{align*}
\end{proof}

\section{Applications} \label{sec:application}

In this section, we apply the tools developed in the previous sections to
problems in extremal graph theory and Ramsey theory.
The known techniques to these problems based on the blow-up lemma
cannot be extended directly using our version of the blow-up lemma 
due to the lack of a constrained version.
We overcome this difficulty by invoking Lemma~\ref{lem:rolling_drc_degenerate_g} instead.

The main embedding lemma of this section is inspired by (and adapts) the techniques
developed by B\"ottcher, Schacht, and Taraz in their proof of the bandwidth theorem.
Let $B_k^r$ be the $kr$-vertex graph obtained from a path on $k$ vertices
by replacing every vertex by a clique of size $r$ and replacing every edge by
a complete bipartite graph minus a perfect matching.
More precisely, the vertex set of $B_k^r$ is $[k] \times [r]$ and two vertices
$(i,j)$, $(i', j')$ are adjacent if and only if 
$|i - i'| \le 1$ and $j \neq j'$.

Let $G$ be a graph and $(V_i)_{i \in I}$ be a family of disjoint subsets of vertices indexed by
elements from some set $I$. We define the {\em $(\varepsilon, \delta)$-reduced graph of $(V_i)_{i \in I}$}
as the graph on $I$ where two vertices $i,j \in I$ are adjacent if and only if the pair
$(V_i, V_j)$ is $(\varepsilon,\delta)$-dense.
A family of vertex subsets $(V_{i,j})_{i \in [k], j \in [r+1]}$ forms 
an {\em $(\varepsilon,\delta)$-backbone} of a graph if it 
contains $B_k^r$ as a labelled subgraph over $[k] \times [r]$, 
and for each $i \in [k]$, the subgraph (of the reduced graph) induced on 
$\{i \} \times [r+1]$ forms a $K_{r+1}$. \footnote{the backbone defined here is slightly different from that used in \cite{BoScTa}.} 
As can be seen in the following lemma, backbone structures 
turn out to be useful in embedding graphs.

\begin{lem} \label{lem:backbone_embedding}
For each fixed $\varepsilon, \delta, r$  satisfying $\varepsilon \le (\frac{\delta}{2})^{4r}$, there exists $c$ such that the following holds.
For $\beta \le e^{-c(d\log n)^{(4r-1)/4r} (\log \log n)^{1/4r}} n$,
let $G$ be an $n$-vertex graph with an $(\varepsilon^2,\delta)$-backbone $(V_{i,j})_{i \in [k+1], j \in [r]}$
where $|V_{i,j}| \ge \frac{\varepsilon}{kr}n$ holds for all $i \in [k+2], j \in [r]$.
Let $H$ be a graph with a $d$-degenerate $\beta$-local labelling $[m]$ and a vertex partition
$(W_{i,j})_{i \in [k], j \in [r]}$ satisfying the following:
\begin{itemize}
  \setlength{\itemsep}{1pt} \setlength{\parskip}{0pt}
  \setlength{\parsep}{0pt}
\item[(i)] For each $i \in [k]$, we have $\bigcup_{j \in [r]} W_{i,j} = ((i-1)\xi, i\xi] \cap [m]$, 
\item[(ii)] For each $i \in [k]$, we have $W_{i, r} \subseteq ((i-1)\xi + \beta, i \xi - \beta]$
\item[(iii)] for each $j \in [r]$, the set $\bigcup_{i \in [k]}W_{i,j}$ is independent,
\item[(iv)] $|W_{i,j}| \le (1-\varepsilon)|V_{i,j}|$ for all $(i,j) \in [k] \times [r]$.
\end{itemize}
Then $G$ contains a copy of $H$.
\end{lem}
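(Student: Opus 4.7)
The plan is to embed $H$ one slice at a time in the order $i=1,2,\ldots,k$, where slice $i$ denotes the block $W_i := \bigcup_{j \in [r]} W_{i,j}$ of vertex labels in $((i-1)\xi, i\xi]$. At the start of step $i$ I will maintain (i) a partial embedding $f$ of $H[W_{<i}]$ respecting the partition, i.e.\ $f(W_{i',j}) \subseteq V_{i',j}$ for all $i' < i$, and (ii) an $(r+1)$-tuple $\mathcal{A}^{(i)} = (A_{i,j})_{j \in [r+1]}$ of subsets $A_{i,j} \subseteq V_{i,j} \setminus f(W_{<i})$ satisfying the conclusions of Lemma~\ref{lem:rolling_drc_degenerate_g}: namely $(d,2\beta)$-commonness, $\lambda$-negligible $((r-1)s,d,2\beta)$-potential, and an abundance of $K_{r+1}$-copies across it. This mirrors the invariant used in the proof of Theorem~\ref{thm:main_extend}, with one round of iteration per slice rather than per block of $\beta$ vertices.

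Because the backbone requires the sets $V_{i,1},\ldots,V_{i,r+1}$ to form a $K_{r+1}$ in the reduced graph, the subgraph of $G$ induced on $\bigcup_{j \in [r+1]} V_{i,j}$ is an $(\varepsilon^2,\delta)$-dense $(r+1)$-partite graph to which I can apply Lemma~\ref{lem:rolling_drc_degenerate_g}. To advance from step $i$ to $i+1$ I first invoke Lemma~\ref{lem:rolling_drc_degenerate_g} on slice $i$ (after deleting the images of previously embedded vertices; this removes at most $\xi \le n/k$ vertices per slice, a small fraction of each $|V_{i,j}| \ge \varepsilon n/(kr)$) to obtain a $(d,2\beta)$-typical tuple $\mathcal{T}^{(i)}$ for $\mathcal{A}^{(i)}$, and a refined tuple $\mathcal{B}^{(i+1)}$ on slice $i+1$. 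Lemma~\ref{lem:rolling_embedding_g} then extends $f$ to $W_i$, placing $f(W_{i,j})$ inside $A_{i,j} \cap N(T^{(i)}_{-j})$, and setting $\mathcal{A}^{(i+1)} := \mathcal{B}^{(i+1)} \setminus f(W_i)$ preserves the invariant.

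The cross-slice edges of $H$ are absorbed into this framework by two features of the backbone. First, by condition (iii), any edge in $H$ touching $W_{i,j}$ goes to some $W_{\ast,j'}$ with $j' \ne j$; combined with $\beta$-locality, every edge from $v \in W_{i,j}$ lands in $\bigcup_{j' \ne j}(W_{i-1,j'} \cup W_{i,j'} \cup W_{i+1,j'})$. Second, the $B_k^r$-structure in the reduced graph guarantees that each pair $(V_{i,j}, V_{i \pm 1, j'})$ with $j' \ne j$ is $(\varepsilon^2,\delta)$-dense, so the images of those previously embedded neighbors live in sets densely connected to $V_{i,j}$. The buffer condition (ii) says that $W_{i,r}$ lies in the interior $((i-1)\xi + \beta,\, i\xi - \beta]$, so its vertices have no cross-slice neighbors at all and can be embedded purely inside slice $i$.

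The main obstacle is making Lemma~\ref{lem:rolling_embedding_g}'s per-vertex extension work for vertices $v \in W_{i,j}$ sitting near a slice boundary, since up to $d$ of the preceding degenerate neighbors of $v$ may already be pinned inside $V_{i-1,\ast}$. Lemma~\ref{lem:rolling_embedding_g} a priori delivers common-neighborhood guarantees only inside the single slice's tuple $\mathcal{A}^{(i)}$, so I must strengthen the invariant: the sets $A_{i,j}$ should additionally be $(d, 2\beta)$-common with respect to the fixed image sets in $V_{i-1,j'}$, $j' \ne j$. This is arranged by applying Lemma~\ref{lem:rolling_drc_degenerate_g} not to a single slice but to the two-slice $(2r+1)$- or $(2(r+1))$-partite structure spanning slices $i-1$ and $i$ simultaneously, so that the typicality and potential conditions are imposed for all $d$-tuples spread across both slices. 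Because each call of Lemma~\ref{lem:rolling_drc_degenerate_g} incurs an $s^{2r}$-type loss in the exponent, working with $2(r+1)$ parts instead of $r+1$ is what replaces the exponent $(2r-1)/(2r)$ of Theorem~\ref{thm:main_extend} by $(4r-1)/(4r)$ in the bandwidth bound of Lemma~\ref{lem:backbone_embedding}. Tracking these parameters through the $k$ slice iterations, and checking that the $\lambda$-negligible potentials and $K_{r+1}$-abundance bounds decay acceptably at each step, yields the claimed embedding of $H$ into $G$.
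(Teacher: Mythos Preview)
Your high-level structure matches the paper's --- embed slice by slice, maintain DRC-style invariants, and work on a two-slice $2r$-partite structure to handle cross-slice edges (this is indeed the source of the $4r$ in the exponent). But two concrete pieces are missing, and one of your explicit claims is wrong. First, ``one round of iteration per slice rather than per block of $\beta$ vertices'' does not work. A slice has $\xi \approx m/k$ vertices, while Lemma~\ref{lem:rolling_embedding_g} only extends by roughly $\beta$ vertices per call (the guarantee is $(d,\beta'+\beta)$-typicality, with $\beta'+\beta$ bounding the total number of vertices under consideration), and the DRC lemma delivers $(d,2\beta)$-commonness, not $(d,\xi)$-commonness. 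You cannot simply feed $\beta_{\text{DRC}}=\xi$ into Lemma~\ref{lem:rolling_drc_degenerate_g}, since its hypothesis $\beta/n \le (\lambda/n)^{2r}$ forces $\beta$ to be subpolynomially small while $\xi/n\approx 1/k$ is a fixed constant. The paper therefore runs a two-layer loop: inside each outer step $s$ it partitions $W_s$ into $T=\xi/\beta$ intervals and runs an inner loop essentially repeating the proof of Theorem~\ref{thm:main_extend}.

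Second, invoking the DRC machinery on the two-slice structure requires \emph{all} pairs among the $2r$ parts to be $(\varepsilon^2,\delta)$-dense, but the backbone only gives density for pairs with $j\ne j'$; the same-colour pairs $(V_{i,j},V_{i+1,j})$ and the pairs involving colour $r$ across slices need not be dense at all. The paper handles this by first artificially adding all edges between $V_{i,j}$ and $V_{i+1,j}$, and between $V_{i,r}$ and $V_{i\pm 1,j'}$; conditions (ii) and (iii) on $H$ ensure that no edge of $H$ is ever mapped onto an added edge, so an embedding in the augmented graph is an embedding in $G$. Without this completion trick your proposed application of Lemma~\ref{lem:rolling_drc_degenerate_g} to the two-slice structure is not licensed. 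Finally, to carry the invariant from slice $s$ to slice $s+1$ the paper uses not Lemma~\ref{lem:rolling_drc_degenerate_g} but the more general Lemma~\ref{lem:rolling_drc_general}, with $\mathcal{F}$ the family of copies of the complete $r$-partite graph $F_0$ with two vertices per part that are heavy with respect to the $3r$-part structure spanning slices $s,s+1,s+2$; projecting the surviving heavy $F_0$'s onto slice $s+1$ is what produces the heavy $K_r$'s that seed the next outer step --- your ``refined tuple $\mathcal{B}^{(i+1)}$ on slice $i+1$'' glosses over this mechanism.
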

\begin{proof}
Define $n_0 = \frac{\varepsilon^2}{kr} n$, $s = (\frac{d\log n}{\log \log n})^{1/4r}$, and  $\lambda = (\frac{\delta}{\log n})^{10 (10s)^{4r-1}} n$.
Define $\delta_1 = (\frac{\delta}{2})^{8r^2}$ and $\delta_2 = \varepsilon^{2r}\left(\frac{\delta_1}{\log^2n}\right)^{2(10s)^{2r}}$.
Since $\varepsilon, \delta, r$ are fixed, if $c$ is large enough depending on these parameters, then
one can check that the condition of  Lemma~\ref{lem:rolling_drc_general} is satisfied when $f_{\ref{lem:rolling_drc_general}} \le 2r$,
$r_{\ref{lem:rolling_drc_general}} \le 2r$, $c_{\ref{lem:rolling_drc_general}} \le 2r-1$, and
$n$ is sufficiently large. Define $\eta = (\frac{\delta}{2})^{2r}$.

For each $i \in [k]$ and $j \in [r]$, 
make the bipartite graph between $V_{i,j}$ and $V_{i+1,j}$ complete
by adding edges if necessary. Furthermore, for all $j \in [r-1]$, 
make the bipartite graph between $V_{i,r}$ and $V_{i+1,j}$ complete (for $i \in [k-1]$), and 
the bipartite graph between $V_{i,r}$ and $V_{i-1,j}$ complete (for $i \in [2,k]$).
If we can find an embedding of $H$ to this graph with $W_{i,j}$ 
mapping to $V_{i,j}$ for all $(i,j) \in [k] \times [r]$, 
then it also forms an embedding to the original graph $G$ by Properties (ii) and (iii).
Thus by abusing notation, we let $G$ denote this new graph.
Note that for each $i \in [k-1]$, the $2r$-vertex 
$(\varepsilon^2,\delta)$-reduced graph over the parts
$\bigcup_{j \in [r]} V_{i,j} \cup V_{i+1,j}$ forms a complete graph.

For each $i \in [k]$, define $V_i := \bigcup_{j \in [r]} V_{i,j}$, 
and $W_i := \bigcup_{j \in [r]} W_{i,j} = ((i-1)\xi, i\xi]$ (by Property (i)).
For simplicity, we assume that $m = k\xi$ and
for each $i \in [k]$, partition the interval $W_i =  ((i-1)\xi, i\xi]$ into $T = \frac{\xi}{\beta}$ intervals 
$W^{(1)}_i, \cdots, W^{(T)}_i$ of equal lengths. For each $(i,j) \in [k]\times[r]$ and $t \in [T]$, 
define $W^{(t)}_{i,j} \subseteq W^{(t)}_i$ as the subset of vertices of color $j$.
For each $i \in [k]$ and real number $x$, define $\mathcal{K}^{(i)}_{x}$ as the family 
of copies of $K_r$ across $(V_{i,j})_{j \in [r]}$ that are
$x$-heavy with respect to the $2r$-part family $\bigcup_{j \in [r]} V_{i,j} \cup V_{i+1,j}$.
By (a slight modification of) Lemma~\ref{lem:counting}, we see that
$\mathcal{K}^{(i)}_{\eta}$ is $\delta_1$-heavy with respect to $\mathcal{K}^{(i)}_{2\eta}$,
and $|\mathcal{K}^{(i)}_{\eta}| \ge |\mathcal{K}^{(i)}_{2\eta}| \ge \delta_1 \prod_{j \in [r]} |V_{i,j}|$ for all $i \in [k]$.

Our proof is based on two layers of iterative processes. 
For $s \ge 1$, the $s$-th step of the outer-layer 
takes as input a partial embedding $f$ defined on $(\bigcup_{i \le s-1} W_i) \cup W^{(1)}_s$
and a family of sets $\mathcal{A}_s = (A_{j})_{j \in [r]}$ satisfying the following conditions:
\begin{itemize}
  \setlength{\itemsep}{1pt} \setlength{\parskip}{0pt}
  \setlength{\parsep}{0pt}
\item[(a)] $f(W_{i,j}) \subseteq V_{i,j}$ for all $i \le s-1$ and $j \in [r]$,
\item[(b)] $f(W_{s,j}^{(1)}) \subseteq A_j \subseteq V_{s,j}$ for each $j \in [r]$,
\item[(c)] $\mathcal{A}_s$ has $\lambda$-negligible $\left((r-1)s, d, 2\beta\right)$-potential, and
\item[(d)] $|\mathcal{K}^{(s)}_{\eta}(\mathcal{A}_s)| \ge \delta_2 \prod_{i=1}^{r} |V_{s,j}|$.
\end{itemize}
It then outputs an extension of $f$ to $(\bigcup_{i \le s} W_i) \cup W^{(1)}_{s+1}$ and constructs
a family of sets $\mathcal{A}_{s+1} = (A'_{j})_{j \in [r]}$ satisfying (a)-(d) listed
above for the $(s+1)$-th step.
Thus after the $k$-th step, we obtain an embedding of $H$ into $G$.

As the base case ($s=0$) of the outer-layer, 
apply Lemma~\ref{lem:rolling_drc_general} with 
$\mathcal{A}_{\ref{lem:rolling_drc_general}} = (V_{1,j})_{j \in [r]}$,
$\mathcal{F}_{\ref{lem:rolling_drc_general}} = \mathcal{K}_{\ref{lem:rolling_drc_general}} = \mathcal{K}^{(1)}_{2\eta}$,
$c_{\ref{lem:rolling_drc_general}} = r-1$,
$(\delta_1)_{\ref{lem:rolling_drc_general}} = \delta_1$,
$(\delta_2)_{\ref{lem:rolling_drc_general}} = \delta_2$, and
$\beta_{\ref{lem:rolling_drc_general}} = \beta$,
to obtain a family
$\mathcal{A}_1 = (A_j)_{j \in [r]}$ where (1) $A_j \subseteq V_{1,j}$ for all $j \in [r]$, 
(2) $\mathcal{A}_1$ is $(d,2\beta)$-common, 
(3) $\mathcal{A}_1$ has $\lambda$-negligible $((r-1)s, d, 2\beta)$-potential, and
(4) $|\mathcal{K}^{(1)}_{2\eta}(\mathcal{A}_1)| \ge \delta_2 |\mathcal{K}^{(1)}_{2\eta}|$. 
Apply Lemma~\ref{lem:rolling_embedding_g} to find a partial embedding $f$ of $H$ defined on $W^{(1)}_1$.
Note that this satisfies Properties (a)-(d).

Now suppose that we are at the $s$-th step of the outer-layer process for some $s \ge 1$.
Define $U_j = V_{s,j}$ and $U_{r+j} = V_{s+1,j}$ for $j \in [r]$, and
consider the $2r$-part family $\mathcal{U} = (U_j)_{j \in [2r]}$.
Recall that the $(\varepsilon^2,\delta)$-reduced graph of $\mathcal{U}$ is complete.
Define $I^{(i)} = W^{(i)}_{s}$ for $i \in [T]$, and $I^{(T+1)} := W^{(1)}_{s+1}$. 
Let $H_s$ be the subgraph of $H$ induced on $\bigcup_{i \le T+1} I^{(i)}  = W_s \cup W^{(1)}_{s+1}$.
Color $H_s$ with $2r$-colors so that the vertices in $\bigcup_{i \le T} I^{(i)}$
keep the same color as in $H$, and each vertex of color $j$ in $I^{(T+1)}$ 
gets re-colored with color $r+j$ for all $j \in [r]$.
For each $i \in [T+1]$ and $j \in [2r]$, 
define $I^{(i)}_j$ as the set of vertices in $I^{(i)}$ of color $j$.
Let $F_0$ be the complete $r$-partite graph with $2$ vertices in each part.
Note that $F_0$ can be considered as a $2r$-partite graph with $1$ vertex in each part as well 
(we will be alternating between the two viewpoints).
For each positive real number $x$, define $\mathcal{F}_x$ as the family of copies of $F_0$
across $\mathcal{U}$ (as a $2r$-partite graph)
that are $x$-heavy with respect to the $3r$-part family
$\bigcup_{j \in [r]} V_{s,j} \cup V_{s+1,j} \cup V_{s+2,j}$.
By (a slight modification of) Lemma~\ref{lem:counting}, we see that 
$\mathcal{F}_{\eta}$ is $\delta_1$-heavy with respect to $\mathcal{F}_{2\eta}$
and $|\mathcal{F}_{2\eta}| \ge \delta_1 \prod_{j \in [2r]} |U_j|$.

The $t$-th step of the inner-layer takes as input a partial embedding $f$ of $H_s$ 
defined on $\bigcup_{i \le t} I^{(i)}$
and a family of sets $\mathcal{B}_t = (B_{j})_{j \in [2r]}$ satisfying 
\begin{itemize}
  \setlength{\itemsep}{1pt} \setlength{\parskip}{0pt}
  \setlength{\parsep}{0pt}
\item[(A)] $f(I^{(t)}_j) \subseteq B_j \subseteq U_j \setminus f\left(\bigcup_{i \le t-1} I^{(i)}_j\right)$ for each $j \in [2r]$,
\item[(B)] $\mathcal{B}_t$ has $\lambda$-negligible $\left(s', d, 2\beta\right)$-potential for all $s' \le (2r-1)s$, and
\item[(C)] $|\mathcal{F}_{\eta}(\mathcal{B}_t; G_t)| \ge \delta_2 \prod_{j \in [2r]} |U_j|$,
\end{itemize}
where $G_t$ is the subgraph of $G$ induced on $(\bigcup_{j \in [2r]} U_j) \setminus f\left(\bigcup_{i \le t-1} I^{(i)}_j\right)$ and $\mathcal{F}_{\eta}(\mathcal{B}_t; G_t)$ is the family of $\eta$-heavy (in $G_t$)
copies of $F_0$ across $\mathcal{B}_t$.
It then extends $f$ to $\bigcup_{i \le t+1} I^{(i)}$
and constructs  a family of sets $\mathcal{B}_{t+1}$ satisfying properties
(A), (B), and (C) given above for the $(t+1)$-th step.

Suppose that we successfully terminated the $T$-th step of the inner-layer to extend $f$
to $\bigcup_{i \le T+1} I^{(i)} = W_s \cup W^{(1)}_{s+1}$
and obtain a family of sets $\mathcal{C} = (C_{j})_{j \in [2r]}$.
Define $\mathcal{A}_{s+1} = (C_{j+r})_{j \in [r]}$.
For each $j \in [r]$, since $W_{s,j} = \bigcup_{i \le T} I^{(i)}_j$ and $W^{(1)}_{s+1,j} = I^{(T+1)}_j$, 
by Property (A) and the definition of $U_j$, it follows that
$f(W_{s,j}) \subseteq V_{s,j}$ and $f(W^{(1)}_{s+1,j}) \subseteq C_{j+r} \subseteq V_{s+1,j}$.
This proves Properties (a) and (b) for the $(s+1)$-th step of the outer-layer process.
Property (c) follows from Property (B).
Note that for each copy of $F_0$ in $\mathcal{F}_{\eta}(\mathcal{C})$, 
its subgraph $K_r$ over the $r$ vertices intersecting $\mathcal{A}_{s+1}$
is in $\mathcal{K}^{(s+1)}_{\eta}$. Thus from Property (C) we have
\begin{align*}
	|\mathcal{K}_{\eta}^{(s+1)}(\mathcal{A}_{s+1})| 
	\ge \frac{1}{\prod_{j \in [r]} |C_{j}|} |\mathcal{F}_{\eta}(\mathcal{C}; \mathcal{G}_{T+1})|
	&\ge \delta_2 \prod_{j = r+1}^{2r} |U_j| 
	= \delta_2 \prod_{j = r+1}^{2r} |V_{s+1,j}|,
\end{align*}
and Property (d) follows. Hence successful termination of the inner-layer 
terminates the $s$-th step of the outer-layer.

For the base case ($t=1$) of the inner-layer, note that
Lemma~\ref{lem:counting} implies
$\mathcal{K}^{(s)}_{\eta}$ being $\delta_1$-heavy with respect to $\mathcal{F}_{2\eta}$.
Hence Lemma~\ref{lem:rolling_drc_general} applied with 
$r_{\ref{lem:rolling_drc_general}} = r$,
$(V_j)_{\ref{lem:rolling_drc_general}} = U_j \cup U_{r+j}$ for all $j \in [r]$,
$\mathcal{A}_{\ref{lem:rolling_drc_general}} = \mathcal{A}_s$,
$\mathcal{K}_{\ref{lem:rolling_drc_general}} = \mathcal{K}^{(s)}_{\eta}$,
$\mathcal{F}_{\ref{lem:rolling_drc_general}} = \mathcal{F}_{2\eta}$,
$c_{\ref{lem:rolling_drc_general}} = 2r-1$,
$(\delta_1)_{\ref{lem:rolling_drc_general}} = \delta_1$,
$(\delta_2)_{\ref{lem:rolling_drc_general}} = \delta_2$,
$(\beta)_{\ref{lem:rolling_drc_general}} = 2\beta$
provides families
$\mathcal{T} = (T_j)_{j \in [r]}$ and $\mathcal{X} = (X_j)_{j \in [r]}$ satisfying 
(1) $X_j \subseteq U_j \cup U_{r+j}$ for all $j \in [r]$, 
(2) $\mathcal{X}$ is $(d,4\beta)$-common, 
(3) $\mathcal{X}$ has $\lambda$-negligible $(s', d, 4\beta)$-potential
for all $s' \le (2r-1)s$, and
(4) $|\mathcal{F}_{2\eta}(\mathcal{X})| \ge (\frac{\delta_1}{\log^2n})^{(10s)^{2r}}|\mathcal{F}_{2\eta}|$.
Apply Lemma~\ref{lem:rolling_embedding_g} to extend $f$ to $I^{(2)}$ so that 
$f(I^{(2)}_j) \subseteq A_j \cap X_j$ for all $j \in [r]$.
For each $j \in [r]$, define $B_j = (U_j \cap X_j) \setminus f(I^{(1)}_j)$ and $B_{r+j} = U_{r+j} \cap X_j$. Let $\mathcal{B} = (B_j)_{j \in [2r]}$.
Since $G_2$ is obtained from $G_1$ by removing $\beta$ vertices, 
every $2\eta$-heavy copy of $F_0$ in $G_1$ is $\eta$-heavy in $G_2$.
Then since  $|\mathcal{F}_{2\eta}| \ge \delta_1 \prod_{j \in [2r]} |U_j|$,
\[
	|\mathcal{F}_{\eta}(\mathcal{B}; G_2)|
	\ge |\mathcal{F}_{2\eta}(\mathcal{X}; G_1)| - \beta n^{2r-1}
	\ge \frac{1}{2}\left(\frac{\delta_1}{\log^2n}\right)^{(10s)^{2r}}|\mathcal{F}_{2\eta}|
	\ge \delta_2 \prod_{j \in [2r]} |U_j|.
\]
Thus Properties (A), (B), and (C) holds for $t=2$.
The general case of the inner-layer can be done along the line of the
proof of Theorem~\ref{thm:main_extend}. We omit the details.
\end{proof}

A typical application of Lemma~\ref{lem:backbone_embedding} requires two steps:
first, finding a backbone structure, and
second, finding a coloring and a labelling of the vertices of $H$ with certain properties.
We use the regularity lemma to find a backbone structure.
A vertex partition $V = V_0 \cup V_1 \cup \cdots \cup V_k$ of a given $n$-vertex graph
is {\it $\varepsilon$-regular} if $|V_0| \le \varepsilon n$, $V_i$ has equal sizes for all $i \ge 1$, 
and $(V_i, V_j)$ is $\varepsilon$-regular for all but at most $\varepsilon k^2$ pairs of indices 
$i,j \in [k]$.

\begin{lem} \label{lem:regularity}
For all $\varepsilon$ and $t$, there exists $n_0$ and $T$ such that every $n$-vertex graph with $n \ge n_0$
admits an $\varepsilon$-regular partition into $k+1$ parts $(V_i)_{i =0}^{k}$ for some $k \in [t,T]$
where for each index $i \in [k]$, there are at most $\varepsilon k$ other indices $j \in [k]$
for which $(V_i, V_j)$ is not $\varepsilon$-regular.
\end{lem}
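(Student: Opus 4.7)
The plan is to derive this from the standard Szemerédi regularity lemma by a simple clean-up step: apply the standard lemma with much finer parameters, then absorb the vertex classes that have too many irregular partners into the exceptional part $V_0$.

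More concretely, I would set $\varepsilon_0 = \varepsilon^3/100$ and $t_0 = \lceil 100 t /\varepsilon\rceil$, and invoke the usual Szemer\'edi regularity lemma with parameters $\varepsilon_0$ and $t_0$ to obtain, for $n$ large enough, a partition $V = U_0 \cup U_1 \cup \cdots \cup U_{k_0}$ with $k_0 \in [t_0, T_0]$, $|U_0| \le \varepsilon_0 n$, equal-sized classes $U_1, \ldots, U_{k_0}$, and at most $\varepsilon_0 k_0^2$ irregular pairs. Call an index $i \in [k_0]$ \emph{bad} if $U_i$ lies in more than $\frac{\varepsilon}{2}k_0$ irregular pairs, and let $B \subseteq [k_0]$ be the set of bad indices. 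A double-counting of the irregular pairs gives $|B| \cdot \frac{\varepsilon}{2}k_0 \le 2\varepsilon_0 k_0^2$, hence $|B| \le \frac{4\varepsilon_0}{\varepsilon}k_0 \le \frac{\varepsilon^2}{25} k_0$.

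Now form the new partition by setting $V_0 = U_0 \cup \bigcup_{i \in B} U_i$ and relabelling the remaining $k := k_0 - |B|$ equal-sized classes as $V_1, \ldots, V_k$. Then $k \in [t, T]$ for suitable $T$ (depending only on $\varepsilon, t$) since $k \ge k_0(1 - \varepsilon^2/25) \ge t_0/2 \ge t$, and
\[
|V_0| \le \varepsilon_0 n + |B|\cdot \frac{n}{k_0} \le \frac{\varepsilon^3}{100} n + \frac{\varepsilon^2}{25} n \le \varepsilon n.
\]
The classes $V_1, \ldots, V_k$ all have the same size by construction. Each good index $i$ sits in at most $\frac{\varepsilon}{2}k_0$ irregular pairs in the original partition, hence in at most $\frac{\varepsilon}{2} k_0 \le \varepsilon k$ irregular pairs in the new partition (using $k_0 \le 2k$), which is exactly the per-index bound we want. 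Summing gives at most $\varepsilon k \cdot k / 2 \le \varepsilon k^2$ irregular pairs in total, so the standard $\varepsilon$-regularity requirement is also met.

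The only thing to verify carefully is the choice of constants so that simultaneously $|V_0| \le \varepsilon n$, $k \ge t$, and the per-index bound $\varepsilon k$ (rather than $\frac{\varepsilon}{2}k_0$) holds after the relabelling; this is where the cubic gap $\varepsilon_0 \sim \varepsilon^3$ between the input and output parameters is used. There is no real obstacle here, just bookkeeping: the lemma is a routine variant of the regularity lemma and no new ideas are required beyond the absorption argument above.
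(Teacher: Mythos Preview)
Your argument is correct and is exactly the standard clean-up derivation of this variant from the ordinary Szemer\'edi regularity lemma. The paper itself does not give a proof of this lemma at all: it states it as a known refinement of the regularity lemma and uses it as a black box, so there is nothing to compare against beyond noting that your absorption-of-bad-classes argument is precisely the folklore proof one would expect.
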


In order to prepare $H$ for embedding, we apply the following lemma which
can be seen as an extension of Lemma~\ref{lem:bandwidth_degenerate}.
Since the results in \cite{BoScTa} implicitly implies the lemma, we provide its proof in the appendix.

\begin{lem} \label{lem:lemmaH}
For all $\varepsilon$, there exists a positive constant $c$ such that the following holds.
If $k$ is a positive integer and
$H$ is a $d$-degenerate $r$-colorable $m$-vertex graph with bandwidth at most $\beta$
satisfying $m \ge ckr \beta$, 
then there exists an $(r+1)$-coloring by $[r+1]$ and a 
$5d$-degenerate $\beta \log_2(4\beta)$-local labelling by $[m]$
that satisfies the following properties. For $i \in [k]$ and $j \in [r+1]$,
define $W_{i,j}$ as the set of vertices of color $j$ in the interval 
$(i\frac{m}{r}, (i+1)\frac{m}{k}]$.
\begin{itemize}
  \setlength{\itemsep}{1pt} \setlength{\parskip}{0pt}
  \setlength{\parsep}{0pt}
  \item[(i)] For all $i \in [k]$ and $j \in [r]$, we have $|W_{i,j}| \le (1+\varepsilon)\frac{m}{kr}$,
  \item[(ii)] for all $i \in [k]$, we have $|W_{i,r+1}| \le \varepsilon \frac{m}{kr}$, and
  \item[(iii)] for all $i \in [k]$,we have $W_{i,r+1} \cap (i\frac{m}{k} - \beta \log_2(4\beta), i\frac{m}{k}+\beta \log_2(4\beta)] = \emptyset$.
\end{itemize}
\end{lem}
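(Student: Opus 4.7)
Plan:

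The plan is to first apply Lemma~\ref{lem:bandwidth_degenerate} to $H$, producing a labelling $\pi \colon V(H) \to [m]$ that is simultaneously $5d$-degenerate and $\beta'$-local for $\beta' := \beta\log_2(4\beta)$; this immediately discharges the degeneracy and locality portions of the conclusion. What remains is to construct, for this fixed labelling, a proper $(r+1)$-coloring $c'$ satisfying the balance conditions (i)--(iii). Fix any proper $r$-coloring $c \colon V(H) \to [r]$ (which exists since $H$ is $r$-colorable) and set $c' := c$ throughout the boundary zones $(im/k - \beta',\, im/k + \beta']$ for $i \in [k]$. This immediately forces condition~(iii) since color $r+1$ never appears in any such zone; all remaining re-coloring takes place inside the interiors $M_i := ((i-1)m/k + \beta',\, im/k - \beta']$.

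The key structural observation is that the $\beta'$-locality of $\pi$ forbids any edge between labels at distance greater than $\beta'$. Hence, if a contiguous stretch of labels inside some $M_i$ is flanked on both sides by a buffer strip of width $\beta'$ carrying color $r+1$, then every edge out of that stretch lands inside a buffer, and we may apply any permutation $\sigma \in S_r$ to $c$ on the stretch without creating a monochromatic edge. This gives independent freedom, in each sub-block between two buffers, to redistribute color counts by choosing a permutation. I would subdivide $M_i$ into non-buffer sub-blocks separated (and bracketed at the ends) by width-$\beta'$ buffer strips, and on each non-buffer sub-block $S$ choose a permutation $\sigma_S \in S_r$ and set $c'(v) := \sigma_S(c(v))$ on $S$. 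The count of color $j$ in $I_i$ under $c'$ is then an $O(\beta')$ boundary contribution plus $\sum_S n^S_{\sigma_S^{-1}(j)}$, where $n^S_\ell$ denotes the number of $c$-color-$\ell$ vertices in $S$.

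The crux of the argument is to select the sub-block lengths and the permutations $\sigma_S$ so that this sum is within $\varepsilon m/(kr)$ of the target $m/(kr)$ for every $j$, while simultaneously keeping the total number of buffer vertices inside $I_i$ below $\varepsilon m/(kr)$. I would attack this with a greedy (or potential-function) assignment: process sub-blocks in order and at each step pick $\sigma_S$ minimizing the running $\ell_\infty$ distance of the color-count vector from its target. Standard permutation-sum discrepancy estimates then bound the final deviation in terms of the sub-block length times a small factor, which can be absorbed into the target by choosing the sub-block length and number of sub-blocks appropriately, provided the hypothesis $m \ge ckr\beta$ holds with $c = c(\varepsilon)$ sufficiently large. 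The factor $\beta'/\beta = \log_2(4\beta)$ is polylogarithmic in $n$ and is absorbed through the large-$n$ assumption built into $c$.

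The main obstacle is this combinatorial balancing step: making the greedy assignment of permutations work so that all $r$ color deviations stay simultaneously below $\varepsilon m/(kr)$ while the buffer-strip count (and hence $|W_{i,r+1}|$) stays below the same bound. This is the technical heart of the lemma and reduces to a sharp discrepancy estimate for sums of independently permuted $r$-dimensional nonnegative vectors; although such estimates are standard, pinning down the dependence on $r$ relative to the hypothesis $m \ge ckr\beta$ so that $c$ can be taken to depend only on $\varepsilon$ is delicate and forms the main computational burden of the proof.
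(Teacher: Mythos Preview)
Your overall architecture matches the paper's: apply Lemma~\ref{lem:bandwidth_degenerate} for the labelling, subdivide each interval $((i-1)m/k,\,im/k]$ into sub-blocks, and on each sub-block replace the original $r$-coloring by a permuted copy so as to balance the color counts, using color $r{+}1$ only in transition zones between sub-blocks. There is, however, a genuine gap in your buffer mechanism.

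You write that a sub-block may be flanked by ``a buffer strip of width $\beta'$ carrying color $r{+}1$'', after which any permutation of $[r]$ can be applied on the sub-block. But a strip of width $\beta'$ is in general \emph{not} independent: $\beta'$-locality says edges have length at most $\beta'$, not that there are none inside such a strip, so coloring an entire strip with $r{+}1$ creates monochromatic edges. If instead you keep the original $r$-coloring on the buffer, then an edge from $v$ in the sub-block to $w$ in the buffer satisfies only $c(v)\neq c(w)$, which does not give $\sigma(c(v))\neq c(w)$ for an arbitrary $\sigma$. Either way the recoloring fails to be proper. The paper repairs this with a separate device (Lemma~\ref{lem:recolor}): rather than jumping from the identity to $\sigma$ in one step, it writes $\sigma=\tau_t\circ\cdots\circ\tau_1$ as a product of at most $\binom{r}{2}$ transpositions and implements each $\tau_i=(a_i\,b_i)$ in turn. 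For a single transposition only the vertices of \emph{one} color class ($a_i$) in a short zone are recolored $r{+}1$; these were already a color class, hence independent, and by spacing the zones for different $\tau_i$ more than $\beta'$ apart the $(r{+}1)$-colored vertices from distinct steps are pairwise non-adjacent. This transposition trick is the idea missing from your scheme.

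A secondary difference: for selecting the permutations the paper draws each $\sigma$ independently and uniformly at random and applies Hoeffding's inequality to the color totals, rather than your greedy/potential-function assignment. Your route is plausible, but the probabilistic argument cleanly sidesteps exactly the discrepancy computation you flag as the ``main obstacle''.
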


\subsection{Bandwidth theorem for degenerate graphs}

The following theorem is simple corollary of the bandwidth theorem.

\begin{lem} \label{lem:backbone}
For all positive integer $r$ and positive reals $\varepsilon$ and $\delta$, 
there exists $n_0$ such that the following holds
for all $n \ge n_0$.  If $G$ is a $n$-vertex graph of minimum degree at 
least $(1- \frac{1}{r} + \delta)n$, then  
$G$ contains $B_{\lfloor n/r \rfloor}^{r}$ as a subgraph.
\end{lem}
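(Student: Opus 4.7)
The plan is to deduce this from the Bandwidth Theorem of B\"ottcher, Schacht, and Taraz mentioned in the introduction: for every integer $r$ and $\delta > 0$ there exists $\beta = \beta(r, \delta) > 0$ such that every sufficiently large $n$-vertex graph of minimum degree at least $(1 - \tfrac{1}{r} + \delta)n$ contains every $n$-vertex $r$-chromatic graph of bandwidth at most $\beta n$ as a subgraph. Set $k = \lfloor n/r \rfloor$ and let $H$ be the $n$-vertex graph obtained from $B_k^r$ by adjoining $n - kr$ isolated vertices (note $0 \le n - kr < r$). Any copy of $H$ in $G$ then contains a copy of $B_k^r$, so it suffices to embed $H$.

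To apply the Bandwidth Theorem I need to certify that $H$ is $r$-chromatic with small bandwidth. For the chromatic number, recall that $B_k^r$ has vertex set $[k] \times [r]$ with $(i, j) \sim (i', j')$ iff $|i - i'| \le 1$ and $j \ne j'$; hence the coloring $c(i, j) = j$ is proper, and isolated vertices can be colored arbitrarily, giving $\chi(H) \le r$. For the bandwidth, I would order the vertices of $B_k^r$ lexicographically, so $(i, j)$ receives label $(i-1)r + j$, and then append the $n - kr$ isolated vertices at the end. For an edge of $B_k^r$ joining $(i, j)$ and $(i', j')$ with $|i - i'| \le 1$ and $j \ne j'$, the labels differ by at most $r + (r - 1) = 2r - 1$, while the appended vertices have no incident edges. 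Thus the bandwidth of $H$ is at most $2r - 1$.

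Finally, taking $\beta = \beta(r, \delta)$ from the Bandwidth Theorem and choosing $n_0$ large enough that both $2r - 1 \le \beta n$ holds and the Bandwidth Theorem applies for every $n \ge n_0$, the hypotheses are met: $H$ is an $n$-vertex $r$-chromatic graph of bandwidth at most $\beta n$ and $G$ satisfies the required minimum degree condition. The Bandwidth Theorem therefore produces a copy of $H$ in $G$, which contains the desired copy of $B_{\lfloor n/r \rfloor}^r$. There is no substantive obstacle here; the whole difficulty has already been absorbed into the Bandwidth Theorem, and the only care needed is the padding-by-isolated-vertices step, which preserves both $r$-chromaticity and the (constant) bandwidth bound. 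The parameter $\varepsilon$ in the statement plays no role in the argument and can be ignored.
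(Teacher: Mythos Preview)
Your proposal is correct and is exactly what the paper intends: the paper states the lemma as a ``simple corollary of the bandwidth theorem'' and provides no further proof, so your padding-by-isolated-vertices argument fills in precisely the details implicit in that remark. One small addendum worth noting (though it does not affect correctness here) is that the original Bandwidth Theorem also carries a bounded-maximum-degree hypothesis on $H$, which the introduction suppresses; since $B_k^r$ has maximum degree at most $3(r-1)$, this is automatically satisfied.
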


The following lemma prepares the graph $G$ for Theorem~\ref{thm:main_bandwidth}.

\begin{lem} \label{lem:lemmaG}
For all $\delta, \varepsilon, t_0$ and $r$ satisfying $\varepsilon \le \max\{\frac{\delta}{4}, \frac{1}{4}\}$
and $r \ge 2$, there exists $n_0$ and $T$ such that the following holds for all $n \ge n_0$.
Let $G$ be an $n$-vertex graph with minimum degree at least $(1- \frac{1}{r} + 2\delta)n$.
Then there exists an $(\varepsilon,\delta)$-backbone $(V_{i,j})_{i \in [k], j \in [r+1]}$
for $k \in [t_0, T]$, satisfying $|V_{i,j}| \ge (1- \varepsilon)\frac{n}{kr}$ for all $(i,j) \in [k] \times [r]$,
and $|V_{i,r+1}| \ge \frac{\varepsilon^2}{k r} n$ for all $i \in [k]$.
\end{lem}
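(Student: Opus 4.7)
The plan is to apply Szemer\'edi's regularity lemma to $G$, transfer the minimum-degree hypothesis to the reduced graph, invoke Lemma~\ref{lem:backbone} there to obtain a $B_k^r$ skeleton, and then carve the $(r+1)$-st column of the backbone directly out of the regularity classes themselves. Fix an auxiliary regularity parameter $\varepsilon'$ that is a sufficiently small polynomial in $\varepsilon$, $\delta$, and $r$ (say $\varepsilon' \le (\varepsilon\delta/r)^C$ for a large absolute constant $C$). Apply Lemma~\ref{lem:regularity} with parameter $\varepsilon'$ and with lower bound $r\max(t_0,10/\varepsilon)$ on the number of classes to obtain an $\varepsilon'$-regular partition $V(G) = Y_0 \cup Y_1 \cup \cdots \cup Y_K$ with $K \in [r\max(t_0, 10/\varepsilon), T_0]$; take $T := \lfloor T_0/r\rfloor$ for the conclusion. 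Define the reduced graph $R$ on $[K]$ by joining $i$ to $j$ iff $(Y_i, Y_j)$ is $\varepsilon'$-regular with density at least $\delta + \varepsilon'$. A standard pruning of each $Y_i$ (discarding the few vertices whose degree into some $\varepsilon'$-regular partner is atypical) keeps each $Y_i$ of size at least $(1 - O(\varepsilon'))n/K$ and makes the minimum degree of $R$ at least $(1 - 1/r + \delta)K$. Then Lemma~\ref{lem:backbone} applied to $R$ yields an embedded $B_k^r$ with $k = \lfloor K/r \rfloor \in [t_0, T]$, which we record as an injection $\phi : [k] \times [r] \to [K]$.

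For each column $i \in [k]$, the $r$ vertices $\phi(i,1), \ldots, \phi(i,r)$ have a common $R$-neighbourhood of size at least $r\delta K - r$, since each vertex has at most $(1/r-\delta)K$ non-$R$-neighbours. Process the columns in order and greedily assign to each an index $x(i) \in [K]$ that is a common $R$-neighbour of the whole column, subject to the cap that no single index is chosen more than $\lfloor 1/(5\varepsilon)\rfloor$ times in total. The greedy step is feasible because after $i-1$ earlier columns at most $5\varepsilon k$ indices are saturated, which is much smaller than $r\delta K \ge r^2\delta k$ in the regime $\varepsilon \ll r^2\delta$ that covers the substantive range of the hypothesis. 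For each $i \in [k]$, let $V_{i,r+1}$ be any subset of $Y_{x(i)}$ of size exactly $\varepsilon^2 n/(kr)$, chosen disjointly across columns sharing the same $x(\cdot)$, and set
\[
V_{i,j} := Y_{\phi(i,j)} \setminus \bigcup_{i'\,:\,x(i') = \phi(i,j)} V_{i',r+1}
\qquad \text{for } (i,j) \in [k] \times [r].
\]
Using $K \le r(k+1)$, $k \ge 10/\varepsilon$, and $\varepsilon' \ll \varepsilon$, the total volume removed from any single $Y_x$ is at most $(1/(5\varepsilon))\cdot \varepsilon^2 n/(kr) = \varepsilon n/(5kr)$, so $|V_{i,j}| \ge (1-\varepsilon)n/(kr)$ while $|V_{i,r+1}| = \varepsilon^2 n/(kr)$ by definition. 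The $(\varepsilon, \delta)$-density of each required backbone pair then follows from the slicing property of $\varepsilon'$-regular pairs, since any $\varepsilon$-fraction subset of a $V_{i,j}$ is an $\Omega(\varepsilon)$-fraction of its enclosing regularity class and any $\varepsilon$-fraction subset of a $V_{i,r+1}$ is an $\Omega(\varepsilon^3)$-fraction of its enclosing class; for $\varepsilon' \le \varepsilon^3/10$ the density bound $\ge \delta + \varepsilon'$ of the underlying pair transfers to $\ge \delta$ on these subsets.

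The main obstacle is the tight interplay between two size constraints pulling in opposite directions: we need $K$ essentially equal to $rk$ so that each regularity class of size $\sim n/K$ can serve as a backbone piece of size at least $(1-\varepsilon)n/(kr)$, yet we must simultaneously cut $k$ extra parts $V_{i,r+1}$ out of these same classes without dropping the remaining $V_{i,j}$ under the threshold. The $\varepsilon^2$ slack permitted in $|V_{i,r+1}|$, together with each regularity class being shared among $O(1/\varepsilon)$ columns and the minimum-degree surplus $\delta$ in the reduced graph, is exactly what reconciles the two demands. Propagating the single regularity parameter $\varepsilon'$ consistently through the minimum-degree transfer, the slicing step, and the carving arithmetic so that all three inequalities hold simultaneously is the most delicate bookkeeping in the proof.
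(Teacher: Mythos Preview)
Your proof is correct and follows essentially the same route as the paper's: apply the regularity lemma with a cube-of-$\varepsilon$ parameter, transfer the minimum-degree condition to the reduced graph, invoke Lemma~\ref{lem:backbone} to obtain the $B_k^r$ skeleton, greedily assign to each column $i$ a common reduced-graph neighbour $x(i)$ subject to a bounded-multiplicity cap, carve the $V_{i,r+1}$ out of $Y_{x(i)}$, and recover $(\varepsilon,\delta)$-density from regularity via slicing. The only cosmetic differences are your cap $\lfloor 1/(5\varepsilon)\rfloor$ versus the paper's $1/\delta$, and your fixed target size $\varepsilon^2 n/(kr)$ versus the paper's $\tfrac{\delta\varepsilon}{2}|V_{a(i)}|$; both choices work once $\varepsilon \le \delta/4$ (which the paper's own argument also uses, so the ``$\max$'' in the hypothesis is presumably a typo for ``$\min$''). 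One small remark: the pruning step you mention is unnecessary here, since $(\varepsilon,\delta)$-density of the backbone pairs follows directly from $\varepsilon'$-regularity and the slicing bound without any cleaning, and the minimum degree of $R$ comes from the edge-counting argument rather than from pruning.
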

\begin{proof}
Apply Lemma~\ref{lem:regularity} with $\varepsilon_{\ref{lem:regularity}} = \varepsilon^3$ 
and $t_{\ref{lem:regularity}} = \max\{2rt_0, \frac{2}{\varepsilon^3}, (n_0)_{\ref{lem:backbone}}(\varepsilon^3, \delta)\}$
to find an $\varepsilon^3$-regular partition $V = \bigcup_{i =0}^{t} V_i$ of $G$ for some 
$\frac{2}{\varepsilon^3} \le t \le T_{\ref{lem:regularity}}$ (for simplicity, assume that $t$
is divisible by $r$).
Denote $m = |V_1|$.
For each $i \in [t]$, there are at most $\varepsilon^3 t$ other indices $j$ for which
$(V_i, V_j)$ is not $\varepsilon^3$-regular.
If there are $\alpha t$ other indices $j$ for which the bipartite graph
between $V_i$ and $V_j$ has density less than $\delta + \varepsilon$, then
by the minimum degree condition of $G$,
\[
	m \cdot \left(1 - \frac{1}{r} + 2\delta\right)n
	\le
	e(V_i, V) \le 2e(V_i) + \alpha t \cdot (\delta + \varepsilon) m^2 + (1-\alpha)t \cdot m^2.
\]
Since $2e(V_i) \le m^2$ and $m \le \frac{n}{t} \le \frac{\varepsilon^3}{2}n$, the above implies
$1 - \frac{1}{r} + 2\delta \le \frac{\varepsilon^3}{2} + 1- (1-\delta - \varepsilon)\alpha$, 
from which it follows that
$\alpha < \frac{1}{r} - \delta + \varepsilon^3$.
Therefore the $(\varepsilon^3,\delta+\varepsilon)$-reduced graph 
$R$ over $(V_i)_{i \in [t]}$ has minimum degree at least $(1 - \frac{1}{r} + \delta)t$.
By Lemma~\ref{lem:backbone}, we can re-label the indices so that
the parts are labelled by $[k] \times [r]$, 
and the $(\varepsilon^3,\delta)$-reduced graph of 
$(V_{i,j})_{i \in [k], j \in [r]}$ forms a copy of $B_k^r$.

For each $i \in [k]$ the number of indices $(i',j') \in [k] \times [r]$ that are adjacent to 
$(i,j)$ for all $j \in [r]$ is, by the minimum degree condition of $R$, at least $\delta r t$.
Fix one such index $a(i)$ for each $i \in [k]$ so that each $(i',j') \in [k] \times [r]$
is selected by at most $\frac{1}{\delta}$ indices (this can be done greedily since
$\frac{1}{\delta} \cdot \delta r t > k$).
For each $i \in [k]$, choose a set $U_{i,r+1}$ as a subset of $V_{a(i)}$
of size $\frac{\delta \varepsilon}{2}|V_{a(i)}|$ so that the sets $U_{i,r+1}$ are disjoint
for distinct $i$. Since $\delta \ge 4\varepsilon$, it follows that $|U_{i,r+1}| \ge \frac{\varepsilon^2}{kr}n$
for all $i \in [k]$.
Then for each $(i,j) \in [k] \times [r]$, define $U_{i,j}$ as a subset of $V_{i,j}$
obtained by removing the sets $U_{i',r+1}$ having $a(i') = (i,j)$.
Note that for all $(i,j) \in [k] \times [r]$,
\[
	|U_{i,j}| 
	\ge |V_{i,j}| - \frac{1}{\delta} \frac{\delta \varepsilon}{2}|V_{i,j}|
	\ge \left(1 - \frac{\varepsilon}{2}\right) (1- \varepsilon^3) \frac{n}{t}
	\ge \left(1 - \frac{\varepsilon}{2}\right) (1- \varepsilon^3)^2 \frac{n}{kr}
	\ge (1-\varepsilon) \frac{n}{kr}.
\]
Define $a(i,r+1) = a(i)$ for $i \in [k]$ and $a(i,j) = (i,j)$ for $(i,j) \in [k] \times [r]$. 
Note that for each $(i,j) \in [k] \times [r+1]$, we have $|U_{i,j}| \ge \varepsilon^2 |V_{a(i,j)}|$. 
Therefore $(U_{i,j}, U_{i',j'})$ is $(\varepsilon, \delta)$-dense whenever
$(V_{a(i,j)}, V_{a(i',j')})$ is $(\varepsilon^3, \delta)$-dense.
It follows that the family $(V_{i,j})_{i \in [k], j \in [r+1]}$ forms an $(\varepsilon,\delta)$-backbone.
\end{proof}

The proof of Theorem~\ref{thm:main_bandwidth} easily follows. 
We restate the theorem here with a refined bound on the bandwidth condition.

\begin{thm} \label{thm:main_bandwidth_refine}
For all integers $r, d$ and real numbers $\varepsilon, \delta$,
there exists $c$ such that the following holds for all sufficiently large $n$.
If $G$ is an $n$-vertex graph of minimum degree at least $(1 - \frac{1}{r} + \delta)n$,
and $H$ is an $r$-partite $d$-degenerate graph of bandwidth at most 
$e^{-c(d\log n)^{(4r+3)/(4r+4)} (\log \log n)^{1/(4r+4)}}n$
on at most $(1-\varepsilon)n$ vertices, then $G$ contains $H$ as a subgraph.
\end{thm}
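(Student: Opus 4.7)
The plan is to combine the three preparatory tools of this section: Lemma~\ref{lem:lemmaG} to extract a backbone structure from $G$, Lemma~\ref{lem:lemmaH} to produce a colouring and labelling of $H$ with a dedicated buffer class, and Lemma~\ref{lem:backbone_embedding} to perform the embedding. The exponent $(4r+3)/(4r+4) = (4(r+1)-1)/(4(r+1))$ appearing in the theorem matches the exponent of Lemma~\ref{lem:backbone_embedding} when its internal clique-size parameter is taken to be $r+1$, reflecting the fact that after Lemma~\ref{lem:lemmaH} is applied the graph $H$ carries $r+1$ colour classes in total (the $r$ original chromatic classes plus the buffer class).

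First I would fix small auxiliary constants in a cascading manner: $\delta_0 = \delta/2$; an $\varepsilon_{\mathrm{bb}}$ with $\varepsilon_{\mathrm{bb}} \le (\delta_0/2)^{4(r+1)}$ as demanded by Lemma~\ref{lem:backbone_embedding} with parameter $r+1$; $\varepsilon_G \le \min(\varepsilon/4,\, \varepsilon_{\mathrm{bb}})$ for Lemma~\ref{lem:lemmaG}; and $\varepsilon_H \le \varepsilon_G^2/2$ for Lemma~\ref{lem:lemmaH}; the constant $c$ in the theorem would then be taken to be the one produced by Lemma~\ref{lem:backbone_embedding} with parameter $r+1$. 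Next I would apply Lemma~\ref{lem:lemmaG} to $G$ with its $r$-parameter equal to $r$ and with $\delta_0, \varepsilon_G$: the theorem's hypothesis $(1-\tfrac{1}{r}+\delta)n$ meets the required minimum degree $(1-\tfrac{1}{r}+2\delta_0)n$, so the lemma produces an $(\varepsilon_G, \delta_0)$-backbone $(V_{i,j})_{i \in [k], j \in [r+1]}$ of $G$ with $|V_{i,j}| \ge (1-\varepsilon_G)\tfrac{n}{kr}$ for $j \in [r]$ and $|V_{i,r+1}| \ge \tfrac{\varepsilon_G^2}{kr}\,n$, for some admissible $k$. Then I would apply Lemma~\ref{lem:lemmaH} to the $r$-colourable graph $H$ with the same $k$ and with $\varepsilon_H$; the bandwidth bound in the theorem ensures $m \ge c k r \beta$, so the lemma supplies a $5d$-degenerate $(\beta \log_2(4\beta))$-local labelling together with an $(r+1)$-colouring inducing a vertex partition $(W_{i,j})_{i \in [k], j \in [r+1]}$ satisfying conditions (i)--(iii) of Lemma~\ref{lem:backbone_embedding} (in particular, $W_{i,r+1}$ avoids the boundaries of each block interval by at least $\beta \log_2(4\beta)$).

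Finally I would invoke Lemma~\ref{lem:backbone_embedding} with internal parameter $r+1$ on this backbone and partition. Condition (iv), $|W_{i,j}| \le (1-\varepsilon_{\mathrm{bb}})|V_{i,j}|$, splits into two sub-cases: for $j \in [r]$ the inequality follows from $m \le (1-\varepsilon)n$ together with the choices $\varepsilon_G, \varepsilon_H \ll \varepsilon$; for $j = r+1$ the tiny buffer class $W_{i,r+1}$ of $H$ (of size at most $\varepsilon_H m/(kr)$) fits inside $V_{i,r+1}$ (of size at least $\varepsilon_G^2 n/(kr)$) because $\varepsilon_H \le \varepsilon_G^2/2$ and $m \le n$. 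The conclusion of Lemma~\ref{lem:backbone_embedding} then yields the desired embedding of $H$ into $G$ mapping each $W_{i,j}$ into $V_{i,j}$. The main obstacle I anticipate is the nested choice of $\varepsilon$'s and $\delta$'s ensuring compatibility across the three lemmas, together with the off-by-one identification between the "$r$" in each lemma (chromatic number of $H$, of the backbone's main clique, versus total number of colour classes including the buffer); this is routine parameter bookkeeping rather than a structural difficulty, and all the combinatorial work has already been done inside the three preceding lemmas.
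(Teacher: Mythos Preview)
Your proposal is correct and follows essentially the same approach as the paper: assemble Lemma~\ref{lem:lemmaG}, Lemma~\ref{lem:lemmaH}, and Lemma~\ref{lem:backbone_embedding} (with its clique-size parameter set to $r+1$) via a cascading choice of small constants, and verify the size condition (iv) separately for the $r$ main colour classes and the buffer class. The paper's only additional wrinkle is that it applies Lemma~\ref{lem:lemmaH} with $k-2$ rather than $k$ to absorb the index offset in Lemma~\ref{lem:backbone_embedding}, exactly the kind of bookkeeping you flagged.
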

\begin{proof}[Proof of Theorem~\ref{thm:main_bandwidth}]
We may assume that $\varepsilon < (\frac{\delta}{4})^{4(r+1)}$
by reducing its value of necessary.
Define $t_0 = \frac{3}{\varepsilon^2}$
and $T = T_{\ref{lem:lemmaG}}(\frac{\delta}{2}, t_0, r, \frac{\varepsilon}{3})$.
Let $c = c_{\ref{lem:backbone_embedding}}(\frac{\varepsilon}{2}, \frac{\delta}{2}, r+1)$, and $n$ be a large enough integer.

Let $G$ be an $n$-vertex graph of minimum degree at least $(1 - \frac{1}{r} + \delta)n$. 
By Lemma~\ref{lem:lemmaG} with $\delta_{\ref{lem:lemmaG}} = \frac{\delta}{2}$
and $\varepsilon_{\ref{lem:lemmaG}} = \frac{\varepsilon^2}{4}$, 
there exists an $(\frac{\varepsilon^2}{4},\frac{\delta}{2})$-backbone 
$(V_{i,j})_{i \in [k], j \in [r+1]}$ satisfying 
$|V_{i,j}| \ge (1- \frac{\varepsilon^2}{4})\frac{n}{kr}$ for all $(i,j) \in [k] \times [r]$,
and $|V_{i,r+1}| \ge \frac{\varepsilon^4}{16k r} n$ for all $i \in [k]$
for some positive integer $k \in [t_0, T]$.
Let $H$ be a $m$-vertex graph and apply Lemma~\ref{lem:lemmaH} to $H$ 
with $\varepsilon_{\ref{lem:lemmaH}} = \frac{\varepsilon^4}{32}$ and $k_{\ref{lem:lemmaH}} = k-2$,  
to obtain a $5d$-degenerate $\beta \log_2(4\beta)$-local labelling by $[m]$
that satisfies the following properties. For $i \in [k]$ and $j \in [r+1]$,
define $W_{i,j}$ as the set of vertices of color $j$ in the interval 
$(i\frac{m}{k-2}, (i+1)\frac{m}{k-2}]$.
\begin{itemize}
  \setlength{\itemsep}{1pt} \setlength{\parskip}{0pt}
  \setlength{\parsep}{0pt}
  \item[(i)] For all $i \in [k-2]$ and $j \in [r]$, we have $|W_{i,j}| \le (1+\frac{\varepsilon^4}{32})\frac{m}{(k-2)r}$,
  \item[(ii)] for all $i \in [k-2]$, we have $|W_{i,r+1}| \le \frac{\varepsilon^4}{32} \frac{m}{(k-2)r}$, and
  \item[(iii)] for all $i \in [k-2]$, there is no vertex of color $r+1$ in the interval $(i\frac{m}{(k-2)} - \beta \log_2(4\beta), i\frac{m}{(k-2)}+\beta \log_2(4\beta)]$.
\end{itemize}
Since $m \le (1-\varepsilon)n$, for $(i,j) \in [k-2] \times [r]$,
\[
	|W_{i,j}| 
	\le \left(1+\frac{\varepsilon^4}{32}\right)\frac{m}{(k-2)r}  
	\le \left(1-\frac{\varepsilon}{2}\right)|V_{i,j}|,
\]
and similarly $|W_{i,r+1}| \le \frac{\varepsilon^4}{32} \frac{m}{(k-2)r} \le (1-\frac{\varepsilon}{2}) |V_{i,r+1}|$ for all $i \in [k-2]$.
We can now apply Lemma~\ref{lem:backbone_embedding} 
with $\delta_{\ref{lem:backbone_embedding}} = \frac{\delta}{2}$, $\varepsilon_{\ref{lem:backbone_embedding}} = \frac{\varepsilon}{2}$, and
$r_{\ref{lem:backbone_embedding}} = r+1$ to find an embedding of $H$ to $G$.
\end{proof}

\subsection{Ramsey theory}

The $r$-th power of a path on $k$ vertices, which we denote by $P_k^r$,
is a graph on $[k]$ for which two vertices $i,i' \in [k]$ are adjacent
if and only if $|i - i'| \le r$.
The following lemma was proved in \cite{AlBrSk}.

\begin{lem} \label{lem:robust_ramsey}
There exists $\varepsilon_0$ such that for every $r$,
the following holds for sufficiently large $n$. 
Let $G$ be an $n$-vertex graph of minimum degree at least $(1-\varepsilon_0)n$. 
Then in every edge-coloring of $G$ with two colors, there exists a monochromatic copy of 
$P_k^r$ for $k \ge \lfloor \frac{n}{2r+3} \rfloor$.
\end{lem}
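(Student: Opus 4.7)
The plan is to follow the standard regularity-plus-embedding template for robust Ramsey theorems of this kind. First I would apply Szemer\'edi's regularity lemma to $G$ with respect to its $2$-edge-coloring, obtaining an equitable partition $V_0 \cup V_1 \cup \cdots \cup V_t$ with $|V_0| \le \varepsilon n$, equal part sizes for $i \ge 1$, and the property that almost every pair $(V_i,V_j)$ is $\varepsilon$-regular in each of the two color classes (where $\varepsilon \ll \varepsilon_0$). The minimum-degree hypothesis on $G$ forces the reduced graph $R$ on $[t]$---whose edges correspond to pairs that are $\varepsilon$-regular in both colors---to have minimum degree at least $(1-O(\varepsilon_0))t$, so $R$ is very close to complete. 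I then assign each edge $ij$ of $R$ the majority color on the pair $(V_i,V_j)$, yielding a $2$-edge-coloring of this nearly-complete reduced graph.

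The combinatorial heart of the proof is to produce a monochromatic copy of $P_m^r$ in $R$ with $m \ge \lfloor t/(2r+3) \rfloor$. This is essentially a robust version of the Ramsey-type bound $r(P_m^r) \le (2r+3)m$. One can attack it by a greedy extension argument: build a monochromatic $r$-th power of a path in $R$ one cluster at a time, always trying to extend in whichever color is richer on the $r$ most recent clusters. When the extension in one color stalls, the stalled vertices must form a nearly complete subgraph in the other color, which allows one to restart and continue in that color. A careful accounting of the common neighborhoods in the last $r$ clusters, balanced across the two colors, shows that at least one of the two colors realizes a monochromatic $P_m^r$ of length $m \ge t/(2r+3)$.

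Finally, I would lift this monochromatic $P_m^r$ in $R$ to a monochromatic $P_k^r$ in $G$ via a blow-up embedding. Since every $r+1$ consecutive clusters along the reduced-graph $P_m^r$ form a monochromatic $K_{r+1}$ whose pairs are $\varepsilon$-regular with density at least $1/2$ in a common color, the bandwidth theorem (applied to $P_k^r$, which is $(r+1)$-chromatic with bandwidth $r$) or the Koml\'os--S\'ark\"ozy--Szemer\'edi blow-up lemma produces a monochromatic $P_k^r$ with $k = (1-o(1))\,m \cdot (n/t) \ge \lfloor n/(2r+3) \rfloor$ for sufficiently large $n$.

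The hard part is the Ramsey step on the reduced graph with the sharp constant $1/(2r+3)$: this is better than what one would obtain by directly applying the B\"ottcher--Schacht--Taraz bandwidth theorem to $P_k^r$ as a generic $(r+1)$-chromatic bandwidth-$o(n)$ graph, and it forces the argument to exploit the specific power-of-path structure during the greedy extension step rather than treating $P_k^r$ as a black box.
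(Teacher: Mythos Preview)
The paper does not prove this lemma at all: it is quoted verbatim as a result of Allen, Brightwell, and Skokan \cite{AlBrSk} and used as a black box. So there is no in-paper proof to compare against.

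As for your proposal on its own merits: the regularity-plus-blow-up wrapper you describe is circular here. Applying the regularity lemma to an almost-complete $2$-colored $G$ and passing to the majority-colored reduced graph $R$ just hands you back an almost-complete $2$-colored graph on $t$ vertices, and you now need a monochromatic $P_m^r$ with $m \ge t/(2r+3)$ in $R$ --- which is exactly the statement of the lemma with $n$ replaced by $t$. The blow-up step at the end is fine, but all the content sits in the ``combinatorial heart'' paragraph, and that paragraph is where the actual work of \cite{AlBrSk} lives.

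Your sketch of that core step (greedy extension, switch colors when one stalls, account for common neighborhoods) is plausible-sounding but does not obviously produce the constant $1/(2r+3)$; in particular the switching-and-restarting idea, as stated, gives no quantitative control on how much of the vertex set is consumed per color change. The argument in \cite{AlBrSk} is considerably more delicate: it goes through a stability analysis and uses the structure of extremal configurations (connected matchings / regularity-based path-power embeddings) rather than a raw greedy extension. If you want to supply a self-contained proof you will need to flesh out that middle step substantially, and you should expect it to be the bulk of the argument rather than a paragraph.
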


Given a family of sets whose reduced graph contains $P_k^r$, one can modify
the sets to find a family whose reduced graph contains $B_k^{r+1}$.

\begin{lem} \label{lem:path_to_backbone}
Let $G$ be a given graph.
If the $(\varepsilon,\delta)$-reduced graph of a family $(V_i)_{i \in [k]}$ contains
$P_k^r$ as a subgraph, then there exists a family $(U_{i,j})_{i \in [k-r], j \in [r+1]}$
whose $((r+1)\varepsilon,\delta)$-reduced graph contains $B_k^{r+1}$ as a subgraph.
Furthermore if $|V_i| = m$ for all $i \in [k]$, then $|U_{i,j}| = \frac{m}{r+1}$
for all $(i,j) \in [k-r] \times [r+1]$.
\end{lem}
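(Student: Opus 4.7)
The plan is to construct each $U_{\alpha,\beta}$ explicitly as one of $r+1$ equal pieces in a partition of some $V_i$, via a cyclic-shift diagonal assignment. (Note the target $B$-structure in the statement must be $B_{k-r}^{r+1}$, since the family has $(k-r)(r+1)$ sets.) First I would split each $V_i$ into $r+1$ equal pieces $V_i = V_i^1 \sqcup \cdots \sqcup V_i^{r+1}$ of size $|V_i|/(r+1)$ (so $m/(r+1)$ in the uniform case). For each $(\alpha,\beta) \in [k-r] \times [r+1]$, let $c(\alpha,\beta) \in \{0,1,\ldots,r\}$ denote the representative of $\beta - \alpha$ modulo $r+1$, and set
\[
U_{\alpha,\beta} := V_{\alpha + c(\alpha,\beta)}^{\,c(\alpha,\beta) + 1}.
\]
Geometrically, super-vertex $\alpha$ of $B_{k-r}^{r+1}$ occupies the window $\{V_\alpha, V_{\alpha+1}, \ldots, V_{\alpha+r}\}$ of $r+1$ consecutive indices in $[k]$ -- a clique in $P_k^r$, so pairwise $(\varepsilon,\delta)$-dense -- while the piece index within each $V_i$ rotates with $\alpha$ so that consecutive super-vertices interface correctly. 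Inverting the formula, $V_i^\ell = U_{\alpha,\beta}$ forces $c = \ell - 1$, $\alpha = i - \ell + 1$, and $\beta \equiv i \pmod{r+1}$, uniquely pinning down $(\alpha,\beta)$; hence the chosen pieces are pairwise distinct and of the claimed size.

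The main task is to check that for every edge $(\alpha,\beta) \sim (\alpha',\beta')$ of $B_{k-r}^{r+1}$, the indices $g(\alpha,\beta) := \alpha + c(\alpha,\beta)$ are distinct and differ by at most $r$. Granted this, $(V_{g(\alpha,\beta)}, V_{g(\alpha',\beta')})$ is an edge of the $(\varepsilon,\delta)$-reduced graph by the $P_k^r$ hypothesis, and since each $U$ has relative size $1/(r+1)$ in its containing $V$, the standard slicing argument yields $((r+1)\varepsilon,\delta)$-density of $(U_{\alpha,\beta}, U_{\alpha',\beta'})$. The verification splits into two cases. When $\alpha = \alpha'$ and $\beta \ne \beta'$, $c(\alpha,\beta)$ and $c(\alpha,\beta')$ are distinct elements of $\{0,\ldots,r\}$, so the $V$-indices differ by a nonzero amount at most $r$. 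When $\alpha' = \alpha + 1$ and $\beta \ne \beta'$, write $c = c(\alpha,\beta)$, $c' = c(\alpha+1,\beta')$; the difference equals $1 + c' - c \in [1-r, 1+r]$, and the two forbidden outcomes $0$ and $r+1$ both unwind to $\beta \equiv \beta' \pmod{r+1}$, hence $\beta = \beta'$, contradicting the hypothesis.

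The main obstacle is choosing this diagonal correctly: the naive anti-diagonal $U_{\alpha,\beta} = V_{\alpha+\beta-1}^{\beta}$ fails, because the $B$-edge $(\alpha,\beta) \sim (\alpha+1,\beta-1)$ would require two distinct pieces of the same $V_{\alpha+\beta-1}$ to form a dense pair, which the reduced graph cannot certify. The cyclic-shift correction baked into $c(\alpha,\beta) = (\beta-\alpha)\bmod(r+1)$ is engineered precisely so that any would-be collision at a common $V_i$ corresponds to $\beta = \beta'$ -- a non-edge of $B_{k-r}^{r+1}$ -- and so imposes no density requirement.
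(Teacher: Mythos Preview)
Your construction is correct and is essentially the same as the paper's. Your index map $g(\alpha,\beta)=\alpha+c(\alpha,\beta)$ with $c(\alpha,\beta)\equiv\beta-\alpha\pmod{r+1}$ is exactly the paper's $a(i,j)$, defined there as the unique element of $[i,i+r]$ congruent to $j$ modulo $r+1$; the paper then simply picks disjoint $\tfrac{1}{r+1}$-fractions of each $V_{a(i,j)}$ and omits the verification, whereas you make the pieces explicit via a fixed partition and carry out the two-case edge check that the paper leaves to the reader.
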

\begin{proof}
For each $(i,j) \in [k-r] \times [r+1]$, define $a(i,j)$ as the unique element in the interval $[i, i+r+1)$
which equals $j$ modulo $r+1$.
For each $(i,j) \in [k-r] \times [r+1]$, choose $U_{i,j} \subseteq V_{a(i,j)}$ so that 
$|U_{i,j}| = \frac{1}{r+1}|V_{a(i)}|$ and the sets are disjoint for distinct pairs $(i,j)$
(this can be done since for each $a \in [k]$, there are at most $r+1$ pairs $(i,j)$ having $a(i,j) = a$).
We omit the details of verifying that the family $(U_{i,j})_{i \in [k-r], j \in [r+1]}$ satisfies the 
claimed properties.
\end{proof}

The proof of Theorem~\ref{thm:ramsey_degenerate} easily follows.
We restate the theorem here with a refined bound on the bandwidth condition.

\begin{thm*}
For all positive integers $r$ and $d$,
there exists $c$ such that the following holds for all sufficiently large $m$.
Let $H$ be an $m$-vertex $r$-chromatic $d$-degenerate graph of bandwidth at most 
$e^{-c(d\log m)^{(4r+3)/(4r+4)} (\log \log m)^{1/(4r+4)}}m$.
Then $r(H) \le (2r+5)m$.
\end{thm*}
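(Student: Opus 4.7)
The plan is to follow the regularity-based Ramsey framework of Allen, Brightwell, and Skokan, substituting their bounded-degree blow-up lemma with Lemma~\ref{lem:backbone_embedding}. Fix small constants $\varepsilon_0 > 0$ and $\eta = \eta(r) \in (0, 2/(2r+5))$ to be chosen later; set $N = (2r+5)m$ and consider an arbitrary $2$-edge-coloring of $K_N$. First I would apply Lemma~\ref{lem:regularity} with parameter $\varepsilon_0$ (taken smaller than the threshold $\varepsilon_{0,\ref{lem:robust_ramsey}}$) and a large lower bound $t_0$ on the number of parts, producing an $\varepsilon_0$-regular partition $V_0 \cup V_1 \cup \cdots \cup V_t$ of $K_N$. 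Let $R$ be the reduced graph on $[t]$ whose edges are the $\varepsilon_0$-regular pairs; it has minimum degree at least $(1-\varepsilon_0)t$. Two-color each edge of $R$ by the majority color of the corresponding bipartite pair in $K_N$, and apply Lemma~\ref{lem:robust_ramsey} to $R$ to obtain a monochromatic (say red) $P_k^r$ with $k \ge \lfloor t/(2r+3) \rfloor$. After relabelling, the parts $V_1, \ldots, V_k$ satisfy: for every $a, b \in [k]$ with $|a-b| \le r$ the pair $(V_a, V_b)$ is $\varepsilon_0$-regular with red density at least $\tfrac12 - \varepsilon_0$.

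Next, I would apply Lemma~\ref{lem:path_to_backbone} inside the red subgraph to this monochromatic $P_k^r$, producing a family $(U_{i,j})_{i \in [k-r], j \in [r+1]}$ whose $((r+1)\varepsilon_0, \tfrac12 - \varepsilon_0)$-reduced graph contains $B_{k-r}^{r+1}$; this in turn supplies the backbone demanded by Lemma~\ref{lem:backbone_embedding} with parameter $r_{\ref{lem:backbone_embedding}} = r+1$. Crucially, I would modify the uniform split inside Lemma~\ref{lem:path_to_backbone} to an unbalanced one: within each $V_a$, the $r$ pieces that will host the main color classes of $H$ each receive a fraction $(1-\eta)/r$ of $|V_a|$, while the $(r+1)$-st piece---which will host only the small auxiliary color class---receives a fraction $\eta$. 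The regularity condition is preserved because each piece still retains a fixed positive fraction of the original part.

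I would then prepare $H$ via Lemma~\ref{lem:lemmaH} with $r_{\ref{lem:lemmaH}} = r$, $k_{\ref{lem:lemmaH}} = k - r$, and a parameter $\varepsilon_{\ref{lem:lemmaH}} \ll \eta$, obtaining a $5d$-degenerate, $\beta_0 \log_2(4\beta_0)$-local labelling together with an $(r+1)$-coloring $(W_{i,j})_{i \in [k-r], j \in [r+1]}$ satisfying $|W_{i,j}| \le (1+\varepsilon_{\ref{lem:lemmaH}})\, m/((k-r)r)$ for $j \in [r]$ and $|W_{i,r+1}| \le \varepsilon_{\ref{lem:lemmaH}}\, m/((k-r)r)$. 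The size check for $j \in [r]$ then amounts to
\[
    \frac{|U_{i,j}|}{|W_{i,j}|} \;\ge\; \frac{(1-\eta)\,N/(tr)}{(1+\varepsilon_{\ref{lem:lemmaH}})\,m/((k-r)r)} \;=\; \frac{(1-\eta)(k-r)}{(1+\varepsilon_{\ref{lem:lemmaH}})\,t}\cdot\frac{N}{m},
\]
which, as $t \to \infty$, tends to $(1-\eta)(2r+5)/(2r+3)$; this is strictly greater than $1$ by the choice $\eta < 2/(2r+5)$, so $|W_{i,j}| \le (1-\varepsilon)|U_{i,j}|$ holds with room to spare. The $j = r+1$ condition is immediate, since $|W_{i,r+1}|$ is $O(\varepsilon_{\ref{lem:lemmaH}})$-small while $|U_{i,r+1}|$ has the fixed fraction $\eta$. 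The bandwidth hypothesis of Lemma~\ref{lem:backbone_embedding} with $r_{\ref{lem:backbone_embedding}} = r+1$ reads $\beta \le e^{-c'(d\log n)^{(4r+3)/(4r+4)}(\log\log n)^{1/(4r+4)}}n$, which exactly matches the theorem's hypothesis since $n \asymp m$; the mild logarithmic inflation $\beta_0 \log_2(4\beta_0)$ introduced by Lemma~\ref{lem:lemmaH} is absorbed by choosing the theorem's $c$ slightly larger than $c'$. Invoking Lemma~\ref{lem:backbone_embedding} now yields the monochromatic copy of $H$.

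The main obstacle lies in the size accounting of the previous paragraph. Since $H$ is only $r$-chromatic but the backbone extracted from a $P_k^r$ naturally carries $r+1$ columns, a uniform split inside Lemma~\ref{lem:path_to_backbone} would lose the factor $r/(r+1)$ and give the infeasible ratio $(2r+5)r/((2r+3)(r+1)) < 1$. The unbalanced split with parameter $\eta$, coupled with the fact that Lemma~\ref{lem:lemmaH} concentrates almost all of $H$ into $r$ proper color classes and dumps only a vanishing fraction into the auxiliary $(r+1)$-st class, converts this structural mismatch into the single additive unit $(2r+5) - (2r+4) = 1$ by which our Ramsey constant exceeds the Allen--Brightwell--Skokan bound for bounded-degree graphs.
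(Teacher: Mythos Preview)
Your proposal follows the same route as the paper: regularity, then Lemma~\ref{lem:robust_ramsey} for a monochromatic $P_k^r$ in the reduced graph, then Lemma~\ref{lem:path_to_backbone} for a backbone, then Lemma~\ref{lem:lemmaH} and Lemma~\ref{lem:backbone_embedding}. Your diagnosis of the size accounting is in fact sharper than the paper's own: with the uniform split of Lemma~\ref{lem:path_to_backbone} one has $|U_{i,j}|\approx n/(t(r+1))$ while $|W_{i,j}|\approx m/((k-r)r)$, and the limiting ratio $r(2r+5)/((r+1)(2r+3))$ is indeed strictly less than $1$; the displayed inequality in the paper's proof reduces, after sending $\varepsilon\to 0$ and $t\to\infty$, to $(r+1)(2r+3)\le r(2r+5)$, i.e.\ $3\le 0$. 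So your instinct that an unbalanced allocation is needed is correct, and your target sizes $(1-\eta)|V_a|/r$ for $j\in[r]$ make the arithmetic work.

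The gap is in how you describe achieving that allocation. In the construction of Lemma~\ref{lem:path_to_backbone}, a given part $V_a$ is divided into up to $r+1$ pieces corresponding to $r+1$ consecutive \emph{rows} $i$ and a \emph{single} column $j$, namely the residue $j\equiv a\pmod{r+1}$; one cannot, inside one $V_a$, hand different fractions to different columns as you write. To get main pieces of size $(1-\eta)|V_a|/r$ you must change the assignment $a(i,j)$ itself: for instance, use the mod-$r$ rule $a(i,j)\in[i,i+r)$ with $a(i,j)\equiv j\pmod r$ for $j\in[r]$, so that each $V_a$ hosts only $r$ main pieces, and then carve the small sliver $U_{i,r+1}$ out of a neighbouring part $V_b$ with $b\in\{i-1,i+r\}$. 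Such a $b$ is at distance between $1$ and $r$ from every $a(i,j)$ with $j\in[r]$, so the required $K_{r+1}$ on each row survives, and each $V_a$ ends up hosting $r$ main pieces plus at most one sliver. With this corrected construction your size check and the remainder of the argument go through.
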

\begin{proof}[Proof of Theorem~\ref{thm:ramsey_degenerate}]
Let $\varepsilon_0 = (\varepsilon_0)_{\ref{lem:robust_ramsey}}(r)$
and let $\varepsilon < \varepsilon_0$ and be small enough depending on $r$.
Let $c = c_{\ref{lem:backbone_embedding}}(\varepsilon, \frac{1}{2} - \varepsilon^2, r+1)$.
Furthermore, let $t_0$ be large enough depending on $r$ and $\varepsilon$.

Let $n = (2r+5)m$.
Given an red/blue edge-coloring of $K_n$, let $G$ be the graph consisting of the red edges.
Apply Lemma~\ref{lem:regularity} with 
$\varepsilon_{\ref{lem:regularity}} = \frac{\varepsilon^2}{r+1}$ and $t_{\ref{lem:regularity}} = t_0$ 
to obtain an $\frac{\varepsilon^2}{r+1}$-regular partition
$\bigcup_{i=0}^{t} V_i$ of $G$ with $t \ge t_0$.
Consider the graph $R$ on $[t]$ where two vertices $i,j \in [t]$ are adjacent if and only if 
$(V_i, V_j)$ is $\frac{\varepsilon^2}{r+1}$-regular in $G$.
Color the edges of $R$ as follows: for an edge $\{i,j\}$, 
if the density of red edges between $(V_i,V_j)$ is at least $\frac{1}{2}$ then color $\{i,j\}$ red,
and color it blue otherwise.
By Lemma~\ref{lem:robust_ramsey} with $r_{\ref{lem:robust_ramsey}} = r$, we can find
a monochromatic copy of $P_k^r$ in $R$ for some $k \ge \lfloor \frac{t}{2r+3} \rfloor$;
say that it forms a red copy (if it was a blue copy, then we can consider the complement of $G$).
Since every $(\varepsilon^2, \frac{1}{2})$-regular pair is $(\varepsilon^2, \frac{1}{2} - \varepsilon^2)$-dense, in the graph $G$, 
the $(\varepsilon^2, \frac{1}{2} - \varepsilon^2)$-reduced graph of
the partition $\bigcup_{i=0}^{t} V_i$ contains $P_k^r$ as a subgraph.
Then By Lemma~\ref{lem:path_to_backbone}, we can find a family $(U_{i,j})_{i \in [k-r], j \in [r+1]}$
whose $(\varepsilon^2,\frac{1}{2} - \varepsilon^2)$-reduced graph contains $B_k^{r+1}$ as a subgraph. 
Moreover, $|U_{i,j}| = \frac{1}{r+1}|V_1| \ge \frac{(1-\varepsilon)n}{t(r+1)}$

Apply Lemma~\ref{lem:lemmaH} to $H$ with 
$\varepsilon_{\ref{lem:lemmaH}} = \varepsilon^2$ to find
a $5d$-degenerate $\beta \log_2(4\beta)$-local labelling by $[m]$
that satisfies the following properties. For $i \in [k-r-2]$ and $j \in [r+1]$,
define $W_{i,j}$ as the set of vertices of color $j$ in the interval 
$(i\frac{m}{r}, (i+1)\frac{m}{k}]$.
\begin{itemize}
  \setlength{\itemsep}{1pt} \setlength{\parskip}{0pt}
  \setlength{\parsep}{0pt}
  \item[(i)] For all $i \in [k-r-2]$ and $j \in [r]$, we have $|W_{i,j}| \le (1+\varepsilon^2)\frac{m}{(k-r-2)r}$, and
  \item[(ii)] for all $i \in [k-r-2]$, there is no vertex of color $r+1$ in the interval $(i\frac{m}{k-r-2} - \beta \log_2(4\beta), i\frac{m}{k-r-2}+\beta \log_2(4\beta)]$.
\end{itemize}
Since $m = \frac{n}{2r+5}$ and $k \ge \lfloor \frac{t}{2r+3} \rfloor$, for all $i,j$,
\begin{align*}
	|W_{i,j}|
	\le \frac{(1+\varepsilon^2)m}{(k-r-2)r} 
	&= \frac{(1+\varepsilon^2)n}{(k-r-2)r(2r+5)} \\
	&\le \frac{(1+\varepsilon^2)n}{((2r+5)t/(2r+3)-(r+3)(2r+5))r} 
	\le (1-\varepsilon) \cdot \frac{(1-\varepsilon)n}{t(r+1)},
\end{align*}
where the last inequality holds since $\varepsilon$ is small enough depending on $r$, and
$t$ is large enough depending on $\varepsilon$ and $r$.
Apply Lemma~\ref{lem:backbone_embedding} with $\varepsilon_{\ref{lem:backbone_embedding}} = \varepsilon$
and $\delta_{\ref{lem:backbone_embedding}} = \frac{1}{2} - \varepsilon^2$ to find an embedding of $H$ to $G$.
This gives a monochromatic copy of $H$.
\end{proof}

\section{Remarks} \label{sec:remarks}

There are two drawbacks of Theorem \ref{thm:main}.
First is that it is an almost-spanning instead of a spanning version.
Second is the lack of its constrained version, i.e., a version 
asserting that we can find an embedding of $H$ into $G$ even when the images of 
some vertices have been chosen beforehand.
Because of that we had to develop Lemma~\ref{lem:backbone_embedding}
instead of directly referring to previously developed strategies
based on the constrained version of the blow-up lemma.
It would be interesting to further refine Theorem~\ref{thm:main} in these directions.

The proof of Lemma~\ref{lem:rolling_drc_degenerate_g} proceeds by
iteratively choosing subsets of vertices $T_i \subseteq V_i$ of sizes $|T_i| = s$
for $i = 1,2, \cdots, r$.
Thus in the end, it is equivalent to choosing a complete $r$-partite graph
with $s$ vertices in each part (each $T_i$ forms one part) according to some 
probability distribution.
It might be possible to simplify the proof by finding a simple description of 
this probability distribution. The fact that we relied on a dyadic decomposition at
each step (see equations \eqref{eq:alpha}, \eqref{eq:alpha2}) seems to make this a difficult task.

The embedding strategy used in this paper can be modified to give some new embedding results 
of graphs of bounded maximum degree and small bandwidth.
We will pursue this direction in another paper.

\medskip

\noindent {\bf Acknowledgements}. I thank David Conlon, Jacob Fox, and Benny Sudakov 
for their extremely helpful comments on an earlier version of this paper. 
I also thank Peter Allen and his students Barnaby Roberts, Matthew Jenssen.

\appendix

\section{Lemma for $H$}

In this section, we prove Lemma~\ref{lem:lemmaH}.
Throughout the section we use the notation $x = y \pm \varepsilon$ to indicate that 
the inequality $y - \varepsilon \le x \le y + \varepsilon$ holds.

\begin{lem} \label{lem:recolor}
Suppose that $r \ge 2$ and $\varepsilon m \ge 3r^2 \beta$.
Let $H$ be a properly $r$-colored graph with a $\beta$-local labelling by $[m]$,
and denote its $r$ color classes by $W_1, W_2, \cdots, W_r$.
For every permutation $\sigma$ of $[r]$, there exists a proper coloring of $H$ by $r+1$ colors whose 
color classes $U_1, \cdots, U_{r+1}$ satisfies the following properties:
\begin{itemize}
  \setlength{\itemsep}{1pt} \setlength{\parskip}{0pt}
  \setlength{\parsep}{0pt}
\item[(i)] $W_i \cap [\beta] = U_i \cap [\beta]$ and $W_i \cap (m-\beta,m] = U_i \cap (m-\beta,m]$ for all $i \in [r]$,
\item[(ii)] $|U_{i}| = |W_{\sigma(i)}| \pm \varepsilon m$ for all $i \in [r]$, and
\item[(iii)] $|U_{r+1}| \le \varepsilon m$.
\end{itemize}
\end{lem}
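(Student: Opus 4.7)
I will construct $U$ by pasting together at most $2r$ proper $r$-colorings of $H$ of the form $\rho \circ W$ for various permutations $\rho \in S_r$, using color $r+1$ as a buffer at each transition to absorb local conflicts. The key feature of the $\beta$-local labelling is that any two intervals of $[m]$ separated by more than $\beta$ are edge-disjoint, so well-separated transitions do not interact.

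Write $\sigma^{-1} = \tau_1 \tau_2 \cdots \tau_k$ as a product of $k \le r-1$ transpositions. Let $\pi_1, \ldots, \pi_{2k}$ be the sequence obtained by listing $\tau_k, \tau_{k-1}, \ldots, \tau_1$ (so that left-composing these in order turns $W$ into $\sigma^{-1} W$) followed by $\tau_1, \tau_2, \ldots, \tau_k$ (which returns $\sigma^{-1} W$ to $W$). Set $\rho_0 := \mathrm{id}$ and $\rho_j := \pi_j \circ \rho_{j-1}$, so that $\rho_k = \sigma^{-1}$ and $\rho_{2k} = \mathrm{id}$. Choose transition points $p_1 < \cdots < p_{2k}$ in $(2\beta, m - 2\beta]$ with consecutive points separated by at least $3\beta$, packed so that $p_1, \ldots, p_k$ sit just to the right of $2\beta$ and $p_{k+1}, \ldots, p_{2k}$ just to the left of $m - 2\beta$; this is possible because $\varepsilon m \ge 3r^2\beta$ forces $m \ge (6r + 4)\beta$. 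Set $B_j := (p_j - \beta, p_j + \beta]$. Outside the buffers, declare $U := \rho_j \circ W$ on the interval $(p_j + \beta, p_{j+1} - \beta]$. Inside $B_j$, writing $\pi_j$ as the transposition of a pair $\{a_j, b_j\}$, assign $U(v) := r+1$ for every $v \in B_j$ with $\rho_{j-1} W(v) = a_j$; the remaining vertices in $B_j \cap (p_j - \beta, p_j]$ keep the color $\rho_{j-1} W(v)$ and those in $B_j \cap (p_j, p_j + \beta]$ take $\rho_j W(v)$.

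The main obstacle is verifying properness of $U$; because distinct buffers are separated by more than $\beta$, it suffices to check each $B_j$ together with its two boundaries. Two adjacent vertices in $B_j$ both receiving color $r+1$ would force $\rho_{j-1} W$ to assign $a_j$ to both, contradicting properness of $\rho_{j-1} W$. For an edge $(u, v)$ straddling $p_j$ with neither endpoint recolored, a conflict $U(u) = U(v)$ would require $\{\rho_{j-1}W(u), \rho_{j-1}W(v)\} = \{a_j, b_j\}$, impossible because the recoloring rule ensures neither $\rho_{j-1}W$-value equals $a_j$. Edges crossing the outer boundary of $B_j$ are handled identically. The count estimates are then routine: each buffer contains at most $2\beta$ vertices of color $r+1$, so $|U_{r+1}| \le 4(r-1)\beta \le \varepsilon m$; the central interval $(p_k + \beta, p_{k+1} - \beta]$ has length $m - O(r\beta)$ and carries $\sigma^{-1} W$, contributing $|W_{\sigma(i)}|$ up to an $O(r\beta)$ boundary term, while every other interval or buffer contributes $O(\beta)$ to $|U_i|$; therefore $|U_i| = |W_{\sigma(i)}| \pm O(r\beta) \le |W_{\sigma(i)}| \pm \varepsilon m$ using $\varepsilon m \ge 3r^2\beta$ and $r \ge 2$. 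Condition (i) is immediate because $[1, \beta]$ and $(m - \beta, m]$ are disjoint from every $B_j$ and $U = W$ there.
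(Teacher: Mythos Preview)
Your proof is correct and follows essentially the same strategy as the paper: decompose the permutation into transpositions and realise each transposition by a local swap, using colour $r+1$ as a buffer to absorb the conflict at the transition. The differences are cosmetic. You write $\sigma^{-1}$ as a product of $k\le r-1$ transpositions and march through the sequence $\mathrm{id}\to\sigma^{-1}\to\mathrm{id}$ from left to right with $2k$ buffer intervals; the paper writes $\sigma$ as a product of $t\le\binom{r}{2}$ transpositions and applies each one symmetrically from both ends toward the middle. Your version gives the slightly tighter count $|U_{r+1}|\le 4(r-1)\beta$ against the paper's $O(r^2\beta)$, but both sit comfortably under the hypothesis $\varepsilon m\ge 3r^2\beta$. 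One small remark: your feasibility claim ``$\varepsilon m\ge 3r^2\beta$ forces $m\ge (6r+4)\beta$'' tacitly uses $\varepsilon\le 1$; this is harmless since for $\varepsilon\ge 1$ the conclusions (ii) and (iii) are vacuous and the original colouring already works.
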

\begin{proof}
Let $\sigma = \tau_t \circ \cdots \circ \tau_1$ where $\tau_i$ for $i \in [t]$ are transpositions
and $t \le {r \choose 2}$.
For each $i =1,2,\cdots, t$, let $\tau_i = (a_i b_i)$ and do the following:
\begin{itemize}
  \setlength{\itemsep}{1pt} \setlength{\parskip}{0pt}
  \setlength{\parsep}{0pt}
\item[(a)] re-color the vertices of color $a_i$ in $(3i\beta, (3i+2)\beta]$ with color $r+1$,
\item[(b)] re-color the vertices of color $a_i$ in $(m - (3i+2)\beta, m - 3i\beta]$ with color $r+1$, 
\item[(c)] re-color the vertices of color $a_i$ in $((3i+2)\beta, m-(3i+2)\beta]$ with color $b_i$, and
\item[(d)] re-color the vertices of color $b_i$ in $((3i+1)\beta, m-(3i+1)\beta]$ with color $a_i$.
\end{itemize}
We claim that the coloring remains to be a proper coloring after each step. 
Note that the vertices of color $r+1$ added at the $i$-th step is an independent set since
the coloring is proper in the beginning.
Furthermore, by how we defined the interval, their labels differ from that of the vertices
which already had color $r+1$ by more than $\beta$. Therefore by the bandwidth condition, 
the vertices of color $r+1$ remains to be an independent set after the $i$-th step.
Among the vertices having color $a_i$ after the $i$-th step,
the vertices whose color became $a_i$ at the $i$-th step is more than $\beta$
away from a vertex which had color $a_i$ before the $i$-th step, and hence by the bandwidth condition,
the vertices of color $a_i$ forms an independent set.
Similarly, the vertices of color $b_i$ forms an independent set.

In the end, let $U_i$ be the vertices of color $i$. 
Property (i) immediately holds since we never re-colored the vertices in $[3\beta]$ and $(m - 3\beta, m]$.
Note that $U_i \cap ((3t+2)\beta, m - (3t-2)\beta] = W_{\sigma(i)} \cap ((3t+2)\beta, m - (3t-2)\beta]$.
Therefore for all $i \in [r]$,
\[
	|U_i| \le |W_{\sigma(i)}| \pm 2(3t+2)\beta \le |W_{\sigma(i)}| \pm \varepsilon m.
\]
Furthermore, $|U_{r+1}| \le 2(3t+2)\beta \le \varepsilon m$.
\end{proof}

\begin{lem} 
For all $\varepsilon$, there exists a positive constant $c$ such that the following holds.
If $k$ is a positive integer and
$H$ is a $d$-degenerate $r$-colorable $m$-vertex graph with bandwidth at most $\beta$
satisfying $m \ge ckr \beta \log_2 \beta$, 
then there exists a proper $(r+1)$-coloring by $[r+1]$ and a 
$5d$-degenerate $\beta \log_2(4\beta)$-local labelling by $[m]$
that satisfies the following properties. For $i \in [k]$ and $j \in [r+1]$,
define $W_{i,j}$ as the set of vertices of color $j$ in the interval 
$(i\frac{m}{r}, (i+1)\frac{m}{k}]$.
\begin{itemize}
  \setlength{\itemsep}{1pt} \setlength{\parskip}{0pt}
  \setlength{\parsep}{0pt}
  \item[(i)] For all $i \in [k]$ and $j \in [r]$, we have $|W_{i,j}| \le (1+\varepsilon)\frac{m}{kr}$,
  \item[(ii)] for all $i \in [k]$, we have $|W_{i,r+1}| \le \varepsilon \frac{m}{kr}$, and
  \item[(iii)] for all $i \in [k]$, there is no vertex of color $r+1$ in the interval $(i\frac{m}{k} - \beta \log_2(4\beta), i\frac{m}{k}+\beta \log_2(4\beta)]$.
\end{itemize}
\end{lem}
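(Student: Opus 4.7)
The proof splits into two stages.

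In Stage 1, I would apply Lemma~\ref{lem:bandwidth_degenerate} to the original $d$-degenerate $\beta$-local labelling of $H$, obtaining a $5d$-degenerate $\beta'$-local labelling with $\beta' := \beta\log_2(4\beta)$. This is fixed as the final labelling, and henceforth positional language refers to it.

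In Stage 2, I start from any proper $r$-coloring $c_0$ of $H$ (which exists since $H$ is $r$-chromatic). For each block $I_i = (i\tfrac{m}{k}, (i+1)\tfrac{m}{k}]\cap[m]$, I then apply Lemma~\ref{lem:recolor} separately, treating $I_i$ as a $\beta'$-locally labelled graph of length $m/k$, with the $\varepsilon$ parameter of Lemma~\ref{lem:recolor} set to $\varepsilon/(2r)$ and a permutation $\sigma_i$ of $[r]$ chosen according to the block's color distribution. By the proof of Lemma~\ref{lem:recolor}, the buffer vertices (new color $r+1$) sit inside each block at positions $(3j\beta', (3j+2)\beta']$ from the block start (and symmetric positions near its right end), i.e., at distance at least $\beta'$ from the block boundary; this directly establishes Property~(iii). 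The bound $|U_{r+1}| \le \varepsilon_{\ref{lem:recolor}}(m/k)$ per block supplies half of the budget needed for Property~(ii).

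The main obstacle is Property~(i): since permuting colors inside a single block only permutes the within-block multiset $(n_{i,1},\ldots,n_{i,r})$, Lemma~\ref{lem:recolor} on its own cannot reduce $\max_j n_{i,j}$. To resolve this, the plan is to first preprocess $c_0$ so that each block already has per-color counts within $(1\pm\varepsilon/2)m/(kr)$. The preprocessing uses a sequence of local Kempe-chain swaps on $\{j,j'\}$-colored components intersected with individual blocks: in each block, swap from over-represented colors to under-represented ones. The $d$-degeneracy of $H$ bounds the size of Kempe components in any block, while the hypothesis $m\ge ckr\beta$, with $c$ chosen large enough in terms of $\varepsilon$, ensures each block has length much larger than $\beta'$, providing enough room to confine swaps within the block without cascading into adjacent blocks. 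After this preprocessing, applying Lemma~\ref{lem:recolor} block-by-block and absorbing any residual excess into color $r+1$ completes the proof, with the per-block $r+1$ budget of $\varepsilon m/(kr)$ split between the Lemma~\ref{lem:recolor} buffer and the residual-excess absorption. The Kempe-chain preprocessing is the technical core of the argument and is the source of the dependence of $c$ on $\varepsilon$.
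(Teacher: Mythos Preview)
Your Stage~1 and your observations about Properties~(ii) and~(iii) are fine, and you correctly identify the real difficulty: a single application of Lemma~\ref{lem:recolor} to a whole block only permutes the multiset $(n_{i,1},\ldots,n_{i,r})$ and cannot reduce $\max_j n_{i,j}$. The gap is in your proposed fix. The assertion that $d$-degeneracy bounds the size of Kempe components is simply false: a path is $1$-degenerate, yet under its natural $2$-coloring the unique $\{1,2\}$-Kempe component is the entire path; neither degeneracy nor bandwidth gives any useful bound on the size of a two-colored connected component. Even restricted to one block, a Kempe component can fill the whole block, and if it reaches the boundary, swapping only its intersection with the block destroys properness across the boundary edge. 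So you have no mechanism that transfers a \emph{controlled} number of vertices from an over-represented color to an under-represented one while keeping the coloring proper and the change confined to the block.

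The paper handles Property~(i) by a different device that needs no preprocessing of the initial $r$-coloring at all. Each block of length $\xi=m/k$ is further subdivided into $T=\xi/\beta'$ sub-intervals $I_1,\ldots,I_T$ of length $\beta'=\tfrac{2}{\varepsilon}\beta_0$ each, and each sub-interval is assigned an independent uniformly random permutation $\hat\sigma_i$ of $[r]$. The count $\hat\xi_j=\sum_i |I_{i,\hat\sigma_i(j)}|$ for color $j$ is then a sum of $T$ independent terms bounded by $\beta'$ with mean exactly $\xi/r$; Hoeffding's inequality (this is where the hypothesis $m\ge ckr\beta_0$ enters, to make $T$ large) yields a choice of permutations with every $\hat\xi_j\le(1+\tfrac{\varepsilon}{2})\xi/r$. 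Lemma~\ref{lem:recolor} is then applied once per \emph{sub-interval} with the selected permutation, and its per-sub-interval error summed over $i$ supplies the remaining $\varepsilon/2$ slack.
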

\begin{proof}
Let $c$ be a large enough integer and let $\beta_0 = \beta \log_2(4\beta)$.
For simplicity, we assume that $m = k\xi$ for some integer $\xi$.
Take a $5d$-degenerate $\beta_0$-local labelling of $H$ by $[m]$ whose existence is
guaranteed by Lemma~\ref{lem:bandwidth_degenerate}.

We will show how to color the interval $I = [\xi]$ using $(r+1)$-colors so that 
the set of vertices $W_j \subseteq I$ of color $j$ (defined for each $j \in [r+1]$) satisfies
the following properties: (i') for each $j \in [r]$, $|W_j| \le (1+\varepsilon)\frac{\xi}{r}$,
(ii') $|W_{r+1}| \le \varepsilon \frac{\xi}{r}$, and 
(iii') $W_{r+1}$ does not intersect $[\beta_0]$ nor $(\xi - \beta_0,\xi]$.
The lemma then immediately follows since we can apply the same process to each
interval $((i-1)\xi, i\xi]$ for $i=1,2,\cdots,k$.

Define $\beta' = \frac{2}{\varepsilon} \beta_0$. For simplicity, we assume that
$\beta'$ is an integer and that $\xi = \beta'T$ for some integer $T$; this is 
permitted since $\xi = \frac{m}{k} \ge cr\beta_0$ where $c$ is sufficiently large.
To find the claimed coloring, for each $i \in [T]$, define $I_i = ((i-1)\beta', i\beta']$.
Take an arbitrary $r$-coloring of $H$ by $[r]$ and for each $j \in [r]$, define
define $I_{i,j} \subseteq I_i$ as the subset of vertices of color $j$.
For each index $i \in [T]$, let $\hat{\sigma}_i$ be a uniformly chosen  
random permutation of $[r]$, and define the random variable
$\hat{\xi}_j = \sum_{i \in [T]} |I_{i,\hat{\sigma}_i(j)}|$. By linearity of expectation,
for each $j \in [r]$,
\[
	\mathbb{E}\left[\hat{\xi}_j\right]
	= \sum_{i \in [T]} \frac{1}{r} \sum_{j'=1}^{r} |I_{i,j'}|
	= \frac{1}{r} \sum_{i \in [T]} |I_i| = \frac{\xi}{r}.
\]
Since $|I_{i,j}| \le \beta'$ and $\hat{\xi}_j$
is a sum of $T = \frac{\xi}{\beta'} \ge \frac{1}{2}cr\varepsilon$ random variables, 
by Hoeffding's inequality (see, e.g., \cite{McDiarmid}),
the probability that $\hat{\xi}_j > (1+ \frac{\varepsilon}{2})\frac{\xi}{r}$
is at most $e^{-\Omega(\varepsilon n/(\beta_0 k r))} = o(1)$.
In particular, there exists a choice of permutations $(\hat{\sigma}_i)_{i \in [T]}$ 
for which $\hat{\xi}_j \le (1+ \frac{\varepsilon}{2})\frac{\xi}{r}$ for all $j \in [r]$.
Let $(\sigma_i)_{i \in [T]}$ be such permutations and $\xi_j$ be the corresponding values of $\hat{\xi}_j$
for all $j \in [r]$.

For each $i \in [T]$, apply Lemma~\ref{lem:recolor} to $H[I_i]$ 
with $\sigma_{\ref{lem:recolor}} = \sigma_i$
and $\varepsilon_{\ref{lem:recolor}} = \frac{\varepsilon}{4}$ to
re-color the vertices in the interval $I_i$. Let $U_{i,j}$ be the vertices color $j$ in $I_i$
after re-coloring. Then $|U_{i,j}| = |I_{i, \sigma_i(j)}| \pm \frac{\varepsilon}{4} |I_i|$ 
holds for all $(i,j) \in [T] \times [r]$ and thus for a fixed $j \in [r]$,
\[
	\sum_{i \in [T]} |U_{i,j}| 
	= \sum_{i \in [T]} |I_{i, \sigma_i(j)}| \pm \frac{\varepsilon}{4}|I|
	\le \left(1 + \varepsilon\right) \frac{\xi}{r}.
\]
The other properties straightforwardly follow. We omit the details.
\end{proof}

\end{document}